\newcommand{\tw}[1] {(\!(#1)\!)}
\newtheorem{theorem}{Theorem}[section]
\newtheorem{lemma}[theorem]{Lemma}
\newtheorem{corollary}[theorem]{Corollary}
\newtheorem{proposition}[theorem]{Proposition}
\newtheorem{noname*}[theorem]{}
\theoremstyle{definition}
\newtheorem{definition}[theorem]{Definition}
\theoremstyle{remark}
\newtheorem{remark}[theorem]{Remark}
\numberwithin{equation}{section}
\DeclareMathOperator{\Hom}{Hom}
\DeclareMathOperator{\dgHom}{\mathcal{H}om}
\DeclareMathOperator{\dgEnd}{\mathcal{E}nd}
\DeclareMathOperator{\Ext}{Ext}
\DeclareMathOperator{\uHom}{\underline{Hom}}
\DeclareMathOperator{\DN}{D^{\textup{b}}_{\textup{m}}(\mathcal{N}_0)}
\DeclareMathOperator{\DNG}{D^{\textup{b}}_{\mathit{G}, \textup{m}}(\mathcal{N}_0)}
\DeclareMathOperator{\DBG}{D^{\textup{b}}_{\mathit{G}, \textup{m}}(\mathscr{B}_0)}
\DeclareMathOperator{\DX}{D^{\textup{b}}_{\textup{m}}(\mathit{X}_0)}
\DeclareMathOperator{\CX}{C^b(\Pure_{\mathscr{S}}\mathit{X}_0)}
\DeclareMathOperator{\KN}{K^b(\Pure\mathcal{N}_0)}
\DeclareMathOperator{\KNG}{K^b(\Pure_\mathit{G}\mathcal{N}_0)}
\DeclareMathOperator{\KNGG}{K^b(\Pure_{\mathit{G}\times\mathbb{G}_m}\mathcal{N}_0)}
\DeclareMathOperator{\KBG}{K^b(\Pure_G\mathscr{B}_0)}
\DeclareMathOperator{\KX}{K^b(\mathcal{P}ure_{\mathscr{S}}\mathit{X}_0)}
\DeclareMathOperator{\PMX}{\mathcal{P}erv^{mix}(X_0)}
\DeclareMathOperator{\Ql}{\bar{\mathbb{Q}}_\ell}
\DeclareMathOperator{\uH}{\underline{H}}
\DeclareMathOperator{\cH}{H}
\DeclareMathOperator{\Pure}{\mathcal{P}ure}
\DeclareMathOperator{\gr}{gr}
\DeclareMathOperator{\IC}{IC}
\DeclareMathOperator{\DNbar}{D^{b}_{Spr}(\mathcal{N})}
\DeclareMathOperator{\DNc}{D^{b}_c(\mathcal{N})}
\DeclareMathOperator{\DBGc}{D^{b}_{\mathit{G},c}(\mathscr{B})}
\DeclareMathOperator{\DBc}{D^{b}_{c}(\mathscr{B})}
\DeclareMathOperator{\DNGbar}{D^{b}_{\mathit{G}, Spr}(\mathcal{N})}
\DeclareMathOperator{\DNGc}{D^{b}_{\mathit{G}, c}(\mathcal{N})}
\DeclareMathOperator{\DXbar}{D^{b}_{\mathscr{S}}(\mathit{X})}
\DeclareMathOperator{\Pervs}{\mathcal{P}erv_{\mathscr{S}}}
\DeclareMathOperator{\Perv}{\mathcal{P}erv}
\DeclareMathOperator{\spr}{\mathcal{S}pr}
\DeclareMathOperator{\DXc}{D^{b}_c(\mathit{X})}
\DeclareMathOperator{\DXGc}{D^{b}_{G, c}(\mathit{X})}
\DeclareMathOperator{\id}{id}
\DeclareMathOperator{\Aut}{Aut}
\DeclareMathOperator{\End}{End}
\DeclareMathOperator{\R}{\Ql [\mathit{W}]\#H^\bullet(\mathscr{B})}
\DeclareMathOperator{\Coinv}{Coinv(\mathit{W})}
\DeclareMathOperator{\Sh}{S\mathfrak{h}^*}
\DeclareMathOperator{\Rtwo}{\Ql [\mathit{W}]\# \Coinv}
\DeclareMathOperator{\RGtwo}{\Ql [\mathit{W}]\#\Sh}
\DeclareMathOperator{\RG}{\Ql[\mathit{W}]\#\coGflag}
\DeclareMathOperator{\RK}{\Ql [\mathit{W}]\#\bigwedge\mathfrak{h}}
\DeclareMathOperator{\QlW}{\Ql [\mathit{W}]}
\DeclareMathOperator{\coflag}{H^\bullet(\mathscr{B})}
\DeclareMathOperator{\coGflag}{H^\bullet_\mathit{G}(\mathscr{B})}
\DeclareMathOperator{\purex}{\Pure_{\mathscr{S}}(\mathit{X}_0)}
\DeclareMathOperator{\puren}{\Pure\mathcal{N}_0}
\DeclareMathOperator{\puregn}{\Pure_\mathit{G}\mathcal{N}_0}
\DeclareMathOperator{\hotproj}{K^{b}(gProj \Rtwo)}
\DeclareMathOperator{\hotprojG}{K^{b}(gProj \RGtwo)}
\DeclareMathOperator{\gmod}{gMod}
\DeclareMathOperator{\gproj}{gProj}
\DeclareMathOperator{\dSh}{D^b(\gmod \Sh)}
\DeclareMathOperator{\dA}{D^b(\mathcal{A})}
\DeclareMathOperator{\dAG}{D^b(\mathcal{A}_\mathit{G})}
\DeclareMathOperator{\dAGG}{D^b(\mathbb{H})}
\DeclareMathOperator{\dperA}{D^b_{per}(\mathcal{A})}
\DeclareMathOperator{\dperAG}{D^b_{per}(\mathcal{A}_\mathit{G})}
\DeclareMathOperator{\dperRwhat}{D^b_{per}(\gmod \R)}
\DeclareMathOperator{\dRG}{D^b(\gmod \RGtwo)}
\DeclareMathOperator{\dRGwhat}{D^b(\gmod \RG)}
\DeclareMathOperator{\dRK}{D^b(\gmod \RK)}
\DeclareMathOperator{\pervk}{\mathcal{P}erv_{KD}}
\DeclareMathOperator{\pervkn}{\mathcal{P}erv_{KD}(\mathcal{N}_0)}
\DeclareMathOperator{\dpervk}{D^b(\pervkn)}
\DeclareMathOperator{\dgder}{\mathcal{D}^{dg}}
\DeclareMathOperator{\dgK}{\mathcal{K}P}
\DeclareMathOperator{\dgf}{\mathcal{D}^{dg}_{f}}
\DeclareMathOperator{\dgper}{\mathcal{D}^{dg}_{per}}
\DeclareMathOperator{\der}{D_{c}^{b}}
\DeclareMathOperator{\constant}{\mathcal{C}}
\DeclareMathOperator{\orlov}{K^b(\mathscr{A})}
\begin{document}

\title[Formality and a derived Springer correspondence]{Formality for the nilpotent cone\\ and a derived Springer correspondence}

\author{Laura Rider}
\address{Mathematics Department\\
Louisiana State University\\
Baton Rouge, Louisiana}
\email{lrider1@math.lsu.edu}

\subjclass[2010]{Primary 17B08, 20G05; Secondary 16E45}

\begin{abstract}

Recall that the Springer correspondence relates representations of the Weyl group to perverse sheaves on the nilpotent cone.  We explain how to extend this to an equivalence between the triangulated category generated by the Springer perverse sheaf and the derived category of differential graded modules over a dg-ring related to the Weyl group. 
\end{abstract}

\maketitle

\section{Introduction}
\label{introduction}

An important problem in geometric representation theory is describing the (equivariant) derived category of sheaves on a variety attached to an algebraic group. For instance, this has been done by Bernstein and Lunts in \cite{BL} for $pt$, by Lunts in \cite{Lu} for projective and affine toric varieties, by Arkhipov, Bezrukavnikov, and Ginzburg in \cite{ABG} for the affine Grassmannian, and by Schn\"urer in \cite{S2} for flag varieties. 

We consider this problem for the nilpotent cone $\mathcal{N}$ of a connected reductive algebraic group $G$. In particular, we focus on $\DNGbar$ --- the triangulated subcategory of $\DNGc$ generated by the simple summands of the Springer perverse sheaf $\mathbf{A}$. It is in this setting that we prove Theorem \ref{bigtheorem}: there is an equivalence of triangulated categories 
\begin{align}\label{thm}
	\DNGbar \cong \dgf(\RG).
\end{align} Here $\dgf(\RG)$ is the derived category of finitely generated differential graded (dg) modules over the smash product algebra $\RG$ with $W$, the Weyl group of $G$ and $\coGflag$, the $G$-equivariant cohomology of the flag variety. This theorem can be viewed as a derived version of the Springer correspondence. Along the way, we prove the following ``mixed version'' of the above (see Theorem \ref{mixedSpringer}): an equivalence of triangulated categories \[\KNG \cong \dRGwhat\] relating a category built out of mixed sheaves $\KNG$ and the derived category of graded modules over $\RG$. We also prove the obvious non-equivariant analogue, i.e. an equivalence \[\KN\cong\dperRwhat.\]  

There are two key components to the proof of \eqref{thm}: formality and Koszulity. A dg-ring $\mathcal{R}$ is called formal if it is quasi-isomorphic to its cohomology $\cH^\bullet(\mathcal{R})$. The role of formality in derived equivalences such as \eqref{thm} is well established, but in our case, the construction of the dg-ring and functor is less straightforward than the analogue for the flag variety. Roughly, the machinery of quasi-hereditary categories does not apply, so new techniques are required. Instead, we exploit a non-standard $t$-structure on the triangulated category $\KNG$ that arises via Koszul duality. 

One might also expect a non-equivariant version of \eqref{thm}. Unfortunately, the ring $\R$ is not Koszul, so the methods of the present paper do not yield that result. 

\subsection*{Organization of the paper} Let $X_0$ be a variety over $\mathbb{F}_q$. We introduce a category $\Pure_{\mathscr{S}}(X_0)\subset\DX$ of weight zero objects in Section \ref{basics} and construct a realization functor $\KX\rightarrow\DX$ in Section \ref{section:realization}. In Section \ref{section:Mix}, we prove that $\KX$ is a \textit{mixed version} of its analogue over $\bar{\mathbb{F}}_q$. In Section \ref{section:Springer}, we introduce notation related to $\mathcal{N}$ and prove Frobenius invariance of certain $\Ext$ groups for objects related to the Springer sheaf $\mathbf{A}$ (see Lemmas \ref{lemma:trivialFrob} and \ref{Lemma:purenfrinv}). We prove a ``mixed version" of the Springer correspondence (see Theorem \ref{mixedSpringer}) in Section \ref{sec:mixedspringer}, and in Section \ref{dg}, we prove a ``derived version" of the Springer correspondence (Theorem \ref{bigtheorem}). In the appendix, we prove our functor for the equivalence in Theorem \ref{bigtheorem} is triangulated.

\subsection*{Acknowledgments}
This article contains the main results of my thesis written at Louisiana State University. I am very grateful to my advisor Pramod Achar for his advice concerning this problem and for his adamant insistence on correct dash usage. Without his guidance, this work would not be possible. I would also like to thank Simon Riche and Anthony Henderson for a number of useful comments and suggestions.

\section{Basics on mixed sheaves and purity}
\label{basics}

We fix a finite field $\mathbb{F}_q$ and a prime number $\ell$ different from the characteristic of $\mathbb{F}_q$. Let $X_0$ be a variety defined over $\mathbb{F}_q$. We study the (bounded) $\ell$-adic derived category consisting of mixed complexes of sheaves constructible with respect to some fixed stratification of $X_0$. This category, denoted $\DX$, is studied extensively in \cite{BBD,D, BGS, AR} and others. Note that in Section \ref{section:Mix}, we will use a different meaning for the word \textit{mixed} (\cite[Section 4]{BGS}). While $\DX$ shares some characteristics of that definition (i.e. the notion of weights and purity for objects), it is not mixed in the sense of \cite{BGS}.

If we have a sheaf, complex of sheaves, or a perverse sheaf $\mathscr{F}_0\in\DX$, it will often be useful to extend scalars to get an object $\mathscr{F}$ in $\der(X)$, where $X:= X_0\times_{\textup{Spec}\mathbb{F}_q}\textup{Spec}(\bar{\mathbb{F}}_q)$. Let $Fr: X\rightarrow X$ be the Frobenius map. Note that after extending scalars, $\mathscr{F}$ is endowed with additional structure: an isomorphism $Fr^*(\mathscr{F})\cong \mathscr{F}$. Let $a: X_0\rightarrow \textup{Spec}(\mathbb{F}_q)$ be the structure map. For $\mathscr{F}_0$ and $\mathscr{G}_0$ in $\DX$, we let $\uHom^i(\mathscr{F}_0,\mathscr{G}_0) = R^ia_*R\mathcal{H}om(\mathscr{F}_0, \mathscr{G}_0)$. This is a vector space with an action of Frobenius. Forgetting that action yields $\Hom_{\der(X)}(\mathscr{F}, \mathscr{G}[i])$. 

Recall, from \cite[5.1.2.5]{BBD}, that for $\mathscr{F}_0$ and $\mathscr{G}_0$ in $\DX$, we have a short exact sequence relating morphisms in $\DX$ to the Frobenius coinvariants and invariants (i.e. the cokernel and kernel of the map $Fr-Id$) of the morphisms  in $\der(X)$.%%%%% equation Frob SES %%%%%%%%%%%%%% 
\begin{equation}
\label{eq:FrobSES}
0\rightarrow \uHom^{i-1}(\mathscr{F},\mathscr{G})_{\textup{Frob}}\rightarrow \Hom^i(\mathscr{F}_0,\mathscr{G}_0)\rightarrow \uHom^i(\mathscr{F},\mathscr{G})^{\textup{Frob}}\rightarrow 0
\end{equation}

\begin{remark}
\label{rm:purity} The Frobenius invariants $\uHom^i(\mathscr{F}, \mathscr{G})^{\textup{Frob}}$ inject into the weight 0 part of $\uHom^i(\mathscr{F}, \mathscr{G})$ and the Frobenius coinvariants $\uHom^i(\mathscr{F},\mathscr{G})_{\textup{Frob}}$ are a quotient of the weight 0 part. Thus, $\uHom^i(\mathscr{F}, \mathscr{G})^{\textup{Frob}}$ and $\uHom^i(\mathscr{F}, \mathscr{G})_{\textup{Frob}}$ vanish when $\uHom^i(\mathscr{F}, \mathscr{G})$ is pure of non-zero weight. 
\end{remark}

Fix a square-root of the Tate sheaf. Note that Tate twist affects the weights of an object in the following way: for $\mathscr{F}_0\in\DX$, the weights of $\mathscr{F}_0(-\frac{i}{2})$ equals the weights of $\mathscr{F}_0$ plus $i$. 

\subsection*{Pure of weight zero}
Let $\mathscr{S}$ be a (finite up to isomorphism) collection of simple perverse sheaves that have weight 0. Then for any $i\in\mathbb{Z}$ and $S\in\mathscr{S}$, $S[2i](i)$ is also pure of weight 0. Define $\purex$ as a full subcategory of $\DX$ containing objects that are finite direct sums of such objects, i.e. if $M\in\purex$, then there exist $S_1, \ldots, S_N\in\mathscr{S}$ (possibly repeating) and integers $i_1, \ldots, i_N$ such that $M = S_1[2i_1](i_1)\oplus\ldots\oplus S_N[2i_N](i_N)$. We define the \textit{length} of such an object to be the number of terms in the direct sum. 
\begin{remark} We could have also defined $\purex$ to be closed under integral shift-twist. The results in this section (appropriately modified), the construction of the realization functor in Section \ref{section:realization}, and the mixed results from Section \ref{section:Mix} still hold with this modification. However, the second $t$-structure as discussed in Section \ref{section:tstr} need not exist.
\end{remark}
\begin{definition}\label{def:kosher} %We say that $\purex$ has \textit{pure morphisms} if for all simple perverse sheaves $S,S'\in \mathscr{S}$, we have that $\uHom^i(S,S')$ is pure of weight $i$ if $i$ is even and 0 if $i$ is odd. 
If $\uHom(S, S'[2n](n))^{\textup{Frob}}\cong\uHom(S, S'[2n](n))$ and $\uHom^{2n+1}(S, S')$ vanishes for all $S, S'\in\mathscr{S}$ and $n\in\mathbb{Z}$, we call $\purex$ \textit{Frobenius invariant}. If $\purex$ is Frobenius invariant, then it is easy to see that for all simple perverse sheaves $S,S'\in \mathscr{S}$, we have that $\uHom^{2n}(S,S')$ is pure of weight $2n$. 
\end{definition}

\begin{lemma}\label{shiftLemma} If $\purex$ is Frobenius invariant, then for all objects $M, N$ in $\purex$, we have that \textup{Hom(}$M$, $N[n]$\textup{)}$= 0$ for all integers $n$ with $n\neq0,1$. 
\end{lemma}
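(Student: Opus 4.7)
The plan is to reduce to the case where $M$ and $N$ are individual building blocks from $\mathscr{S}$ and then chase through the short exact sequence \eqref{eq:FrobSES}, using the parity/weight constraints coming from Frobenius invariance.

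First, by additivity of $\Hom(-,-)$ in both variables, it suffices to treat the case $M = S[2i](i)$ and $N = S'[2j](j)$ for $S, S' \in \mathscr{S}$ and $i, j \in \mathbb{Z}$. Rewriting, setting $k := j - i$ and $m := n + 2k$, we have
\[
\Hom(M, N[n]) \;\cong\; \Hom^{m}(S_0, S'_0(k)),
\]
and the condition $n \neq 0, 1$ translates to $m \notin \{2k, 2k+1\}$. Now I would plug this into the sequence \eqref{eq:FrobSES}, so that proving vanishing of the middle Hom reduces to checking that both the Frobenius coinvariants of $\uHom^{m-1}(S, S'(k))$ and the Frobenius invariants of $\uHom^{m}(S, S'(k))$ vanish.

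For this, I would use two inputs from Frobenius invariance together with Remark \ref{rm:purity}: firstly, $\uHom^{\textup{odd}}(S, S') = 0$, and secondly, $\uHom^{2r}(S, S')$ is pure of weight $2r$, so after the Tate twist $(k)$ the object $\uHom^{2r}(S, S'(k))$ is pure of weight $2r - 2k$ (using the convention that $(k)$ decreases weight by $2k$). Then I would split into cases on the parity of $m$. If $m$ is odd, $\uHom^m(S, S'(k)) = 0$ immediately, while $\uHom^{m-1}(S, S'(k))$ is pure of weight $(m-1)-2k$, which is nonzero since $m \neq 2k+1$; by Remark \ref{rm:purity} its Frobenius coinvariants vanish. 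If $m$ is even, symmetrically, $\uHom^{m-1}(S, S'(k))$ vanishes by odd-degree vanishing, and $\uHom^m(S, S'(k))$ is pure of weight $m - 2k \neq 0$ (since $m \neq 2k$), so its Frobenius invariants vanish by Remark \ref{rm:purity}.

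In either case both outer terms of the sequence \eqref{eq:FrobSES} are zero, forcing $\Hom^{m}(S_0, S'_0(k)) = 0$, which is what we wanted. The only step that requires care is keeping track of how the Tate twist $(k)$ shifts weights — everything else is a direct parity-and-weight bookkeeping argument, so I do not anticipate any serious obstacle beyond that.
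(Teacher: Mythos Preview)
Your argument is correct and follows essentially the same route as the paper: reduce to length-one objects, then use the short exact sequence \eqref{eq:FrobSES} together with the odd-degree vanishing and even-degree purity built into Definition~\ref{def:kosher}, invoking Remark~\ref{rm:purity} to kill the outer terms. The only difference is that the paper handles the case $n>1$ by citing \cite[Proposition~5.1.15]{BBD} (vanishing of Hom from lower to higher weight in $\DX$) and reserves the sequence argument for $n<0$, whereas you treat all $n\neq 0,1$ uniformly via the parity split on $m$; your version is slightly more self-contained and avoids the external reference.
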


\begin{proof} Note that $M$ is pure of weight 0 and $N[n]$ is pure of weight $n$. For $n>1$, the result follows from properties of mixed perverse sheaves \cite[Proposition 5.1.15]{BBD}. Assume that $M$ and $N$ have length 1 and $n<0$. Then $M = S[2i](i)$ and $N = S'[2j](j)$ for integers $i$ and $j$ with $S, S' \in \mathscr{S}$. Of course, \[\Hom(S [2i](i), S'[2j](j)[n]) = \Hom(S[2i-2j](i-j), S'[n]).\] Thus, it suffices to consider the case with $M = S[2i](i)$ and $N=S'$. 

Note that Definition \ref{def:kosher} implies that $\textup{\underline{Hom}}^j(M, N[n])$ is pure of weight $n+j$ since \[ \uHom^j(M, N[n]) = \uHom^j(S[2i](i), S'[n]) =\uHom^{j-2i+n}(S, S')(-i).\] In particular, for $j=0,-1$, $\textup{\underline{Hom}}^j(M, N[n])$ is pure of non-zero weight. This implies that $\uHom(M, N[n])^{\textup{Frob}}$ and $\uHom^{-1}(M, N[n])_{\textup{Frob}}$ are both zero (see Remark \ref{rm:purity}). Thus, by the short exact sequence \eqref{eq:FrobSES}, we have that $\Hom(M, N[n])=0$.

For objects $M$ and $N$ in $\purex$ of lengths greater than 1, the claim follows since $\Hom$ commutes with finite direct sums. \end{proof}

The following lemmas are preparation for Section \ref{section:Mix} when we prove that a certain category $\KN$ (or more generally $\KX$) is a mixed version of $\DNbar$ (or $\DXbar$).

\begin{lemma}\label{puremorphisms} If $\purex$ is Frobenius invariant, then $\uHom^i(M, N)$ is pure of weight $i$ for $i$ even and vanishes for $i$ odd for all $M, N\in\purex$.
\end{lemma}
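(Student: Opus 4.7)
The proof plan is to reduce immediately to the length-one case and then do a direct bookkeeping of shifts, twists, and weights, using the two pieces of information packaged into the Frobenius invariance hypothesis: odd-degree $\uHom$ between members of $\mathscr{S}$ vanishes, and even-degree $\uHom$ between members of $\mathscr{S}$ is pure (of weight equal to the degree, as noted at the end of Definition \ref{def:kosher}).

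First I would note that $\uHom^i(-,-)$ is biadditive on finite direct sums, so it suffices to verify the claim when both $M$ and $N$ are length one, say $M = S[2a](a)$ and $N = S'[2b](b)$ with $S, S' \in \mathscr{S}$ and $a, b \in \mathbb{Z}$. Unwinding the shift and twist conventions recalled just before Definition \ref{def:kosher}, one gets the identity
\[
\uHom^i(S[2a](a),\, S'[2b](b)) \;=\; \uHom^{i+2b-2a}(S, S')(b-a).
\]
If $i$ is odd, then $i+2b-2a$ is odd, so the right hand side vanishes by Frobenius invariance; this handles the odd case.

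If $i = 2m$ is even, then $\uHom^{2m+2b-2a}(S,S')$ is pure of weight $2m+2b-2a$ by the last sentence of Definition \ref{def:kosher}. Applying the Tate twist $(b-a)$ shifts all weights by $-2(b-a)$, so the resulting sheaf is pure of weight $2m+2b-2a-2(b-a) = 2m = i$, which is exactly what is claimed. The main potential pitfall here is simply a sign/convention error in how $[\,\cdot\,]$ interacts with $\uHom^\bullet$ and how Tate twist affects weights, but both are fixed in the preceding paragraphs of the text, and no further geometric input is needed. Thus the lemma will follow directly once the length-one reduction is made and the indices are tracked.
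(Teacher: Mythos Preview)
Your proposal is correct and follows essentially the same approach as the paper: reduce to the length-one case by biadditivity of $\uHom$, then invoke Definition~\ref{def:kosher}. The paper's proof is a two-line sketch that simply asserts the length-one case follows from the definition; you have spelled out the shift--twist bookkeeping that the paper leaves implicit (a computation of the same flavor appears in the proof of Lemma~\ref{shiftLemma}).
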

\begin{proof} By Definition \ref{def:kosher}, the result follows for $M$ and $N$ in $\purex$ of length 1. For arbitrary objects, the claim follows since $\uHom$ commutes with finite sums. \end{proof}

\begin{corollary}\label{lemma:Frob invariant} For all $M, N\in\purex$, we have $\Hom(M,N)\cong\uHom(M, N)^{\textup{Frob}}$ if $\purex$ is Frobenius invariant. 
\end{corollary}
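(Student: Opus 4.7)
The plan is to deduce this immediately from the short exact sequence \eqref{eq:FrobSES} combined with Lemma \ref{puremorphisms}. Specializing \eqref{eq:FrobSES} to $i=0$ with $\mathscr{F}_0 = M$ and $\mathscr{G}_0 = N$ gives
\[
0 \to \uHom^{-1}(M,N)_{\textup{Frob}} \to \Hom(M,N) \to \uHom(M,N)^{\textup{Frob}} \to 0.
\]
So the content is just to kill the left-hand term.

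By Lemma \ref{puremorphisms}, under the Frobenius invariance hypothesis, $\uHom^i(M,N)$ vanishes whenever $i$ is odd. Applying this with $i = -1$, we get $\uHom^{-1}(M,N) = 0$, hence its Frobenius coinvariants vanish as well. Plugging this into the sequence above, the surjection $\Hom(M,N) \twoheadrightarrow \uHom(M,N)^{\textup{Frob}}$ becomes an isomorphism, which is exactly the claim.

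There is essentially no obstacle here; the entire point is that the odd-vanishing already supplied by Lemma \ref{puremorphisms} is tailored precisely to eliminate the potentially non-trivial coinvariant term appearing in \eqref{eq:FrobSES}. The statement should be viewed as a clean packaging of the preceding lemma for use in Section \ref{section:Mix}.
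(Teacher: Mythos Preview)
Your proof is correct and follows essentially the same approach as the paper: specialize the short exact sequence \eqref{eq:FrobSES} at $i=0$, then invoke Lemma \ref{puremorphisms} to kill $\uHom^{-1}(M,N)$ and hence its coinvariants.
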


\begin{proof} Let $M, N\in\purex$. Then $\uHom^{-1}(M, N)$ vanishes by Lemma \ref{puremorphisms}. Thus, the Frobenius coinvariants $\uHom^{-1}(M, N)_{\textup{Frob}}$ are also trivial. Hence, by the short exact sequence \eqref{eq:FrobSES}, we see that $\Hom(M, N)\cong\uHom(M, N)^{\textup{Frob}}$. \end{proof}

\begin{lemma}\label{all Frob invariant} Suppose that $\purex$ is Frobenius invariant. Then $\Hom(M,N)\cong\uHom(M,N)^{\textup{Frob}}\cong\uHom(M,N)$ for all $M, N\in\purex$.
\end{lemma}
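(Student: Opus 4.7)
The lemma asserts two isomorphisms: first $\Hom(M,N)\cong\uHom(M,N)^{\textup{Frob}}$ and second $\uHom(M,N)^{\textup{Frob}}\cong\uHom(M,N)$. The first isomorphism is precisely the content of Corollary \ref{lemma:Frob invariant}, so the new content is the second, which amounts to showing that Frobenius acts as the identity on $\uHom(M,N)$ for all $M, N\in\purex$.

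My plan is to reduce to the length-one case and then invoke the Frobenius invariance hypothesis directly. So suppose first that $M=S[2i](i)$ and $N=S'[2j](j)$ with $S,S'\in\mathscr{S}$ and $i,j\in\mathbb{Z}$. Then
\[
\uHom(M,N) \;=\; \uHom\bigl(S[2i](i),\,S'[2j](j)\bigr) \;\cong\; \uHom^{2(j-i)}(S,S')(j-i) \;=\; \uHom\bigl(S,\,S'[2(j-i)](j-i)\bigr),
\]
where the rewriting is purely a shift-twist bookkeeping. Setting $n=j-i$, the Frobenius invariance hypothesis in Definition \ref{def:kosher} says precisely that Frobenius acts as the identity on $\uHom(S, S'[2n](n))$, so $\uHom(M,N)^{\textup{Frob}}\cong\uHom(M,N)$ in this case.

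For general $M,N\in\purex$, write $M=\bigoplus_k M_k$ and $N=\bigoplus_l N_l$ as finite direct sums of length-one objects. Since $\uHom$ commutes with finite direct sums in each variable and the Frobenius-invariants functor commutes with finite direct sums (of Frobenius-modules), we have
\[
\uHom(M,N)^{\textup{Frob}} \;\cong\; \bigoplus_{k,l}\uHom(M_k,N_l)^{\textup{Frob}} \;\cong\; \bigoplus_{k,l}\uHom(M_k,N_l) \;\cong\; \uHom(M,N),
\]
where the middle isomorphism uses the length-one case. Combining with Corollary \ref{lemma:Frob invariant} yields the chain of isomorphisms claimed in the lemma.

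There is no substantive obstacle here: the statement is essentially a bookkeeping corollary that records the combined force of Definition \ref{def:kosher} and Corollary \ref{lemma:Frob invariant}. The only thing that could plausibly go wrong is a sign or twist convention in the identification $\uHom(S[2i](i), S'[2j](j))\cong\uHom(S, S'[2(j-i)](j-i))$, but this is the standard manipulation $R\mathcal{H}om(\mathscr{F}[a](b), \mathscr{G}[c](d))\cong R\mathcal{H}om(\mathscr{F},\mathscr{G})[c-a](d-b)$.
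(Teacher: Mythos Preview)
Your proof is correct and follows essentially the same approach as the paper: reduce to length-one objects, invoke Definition~\ref{def:kosher} together with Corollary~\ref{lemma:Frob invariant}, and pass to the general case via compatibility of $\Hom$ and $\uHom$ with finite direct sums. Your write-up simply makes explicit the shift-twist bookkeeping that the paper's two-line proof leaves to the reader.
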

\begin{proof} The claim holds for objects of length 1 by the definition of Frobenius invariant and Corollary \ref{lemma:Frob invariant}. The general case holds since $\Hom$ and $\uHom$ commute with finite direct sums. \end{proof}

\section{A Realization Functor}\label{section:realization}

In this section, we construct a triangulated functor from $\KX$ to $\DX$ assuming that $\purex$ is Frobenius invariant.
Our method is based on Beilinson's construction of a \textit{realization} functor in \cite{B}. However applying this technique in a setting without a $t$-structure is due to an idea of Achar and Kitchen. We briefly review the definition of a filtered triangulated category and some of its important properties. Note that we consider increasing filtrations, while Beilinson considers decreasing filtrations.  
\begin{definition} We say that a triangulated category $\mathcal{D}$ is \textit{filtered} if it has a collection of pairs of strictly full triangulated subcategories $(F^{\leq n}\mathcal{D}, F^{\geq n}\mathcal{D})_{n\in\mathbb{Z}}$ satisfying the following properties:
\begin{enumerate}
	\item If $M\in F^{\leq n}\mathcal{D}$ and $N\in F^{\geq n+1}\mathcal{D}$, then $\Hom(M,N)=0$.
	\item We have $F^{\leq n}\mathcal{D}\subset F^{\leq n+1}\mathcal{D}$ and $F^{\geq n}\mathcal{D}\supset F^{\geq n+1}\mathcal{D}$.
	\item For any $Z\in\mathcal{D}$ and $n\in\mathbb{Z}$, there is a distinguished triangle $A\rightarrow Z\rightarrow B\rightarrow$ with $A\in F^{\leq n-1}\mathcal{D}$ and $B\in F^{\geq n}\mathcal{D}$.
	\item The filtration is \textit{bounded}, i.e. $\bigcup_{n\in\mathbb{Z}} F^{\leq n}\mathcal{D} = \bigcup_{n\in\mathbb{Z}} F^{\geq n}\mathcal{D} = \mathcal{D}$.
	\item We have a \textit{shift of filtration} $(s,\alpha)$. Here $s:\mathcal{D}\rightarrow\mathcal{D}$ is an autoequivalence so that $s(F^{\leq n}\mathcal{D})= F^{\leq n+1}\mathcal{D}$ and $s(F^{\geq n}\mathcal{D})= F^{\geq n+1}\mathcal{D}$ and $\alpha$ is a natural transformation $s\rightarrow \id_\mathcal{D}$ with $\alpha_M=s(\alpha_{s^{-1}M})$.
	\item For all $M\in F^{\geq 1}\mathcal{D}$ and $N\in F^{\leq 0}\mathcal{D}$, the natural tranformation $\alpha$ induces isomorphisms 
\begin{align}\label{sisom}
	\Hom(M,N)\cong\Hom(M, sN)\cong\Hom(s^{-1}M, N).
\end{align}
\end{enumerate}
\end{definition} The inclusion functor $F^{\leq n}\mathcal{D}\rightarrow\mathcal{D}$ admits a right adjoint denoted $w_{\leq n}:\mathcal{D}\rightarrow F^{\leq n}\mathcal{D}$. Similarly, $F^{\geq n}\mathcal{D}\rightarrow\mathcal{D}$ admits a left adjoint denoted $w_{\geq n}:\mathcal{D}\rightarrow F^{\geq n}\mathcal{D}$. It is shown in \cite[Proposition A.3]{B} that for each $n\in\mathbb{Z}$ the distinguished triangle in (3) is canonically isomorphic to 
\begin{align}\label{filteredtriangle}
w_{\leq n-1}Z\rightarrow Z\rightarrow w_{\geq n} Z\rightarrow.	
\end{align} Let $\mathcal{D}^n = F^{\leq n}\mathcal{D}\cap F^{\geq n}\mathcal{D}$. The compositions $w_{\leq n}w_{\geq n}$ and $w_{\geq n}w_{\leq n}$ are naturally equivalent, and we denote them by $\gr_n :\mathcal{D}\rightarrow\mathcal{D}^n$. For an object $M$ in $\mathcal{D}$, we define the \textit{filtered support} of $M$ to be the smallest interval $[m,n]$ satisfying $M\in F^{\leq n}\mathcal{D}\cap F^{\geq m}\mathcal{D}$; in other words, $m=\textup{max}\{i\mid M\in F^{\geq i}\mathcal{D}\}$ and $n=\textup{min}\{i\mid M\in F^{\leq i}\mathcal{D}\}.$

\begin{lemma}\label{gralpha} Let $M$ be an object in $\mathcal{D}$. The morphism $\alpha_{sM}:sM\rightarrow M$ induced by the natural transformation $\alpha$ defined above has the property that $\gr_i(\alpha_{sM}) = 0$ for all $i\in\mathbb{Z}$.
\end{lemma}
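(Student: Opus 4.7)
The plan is to show that $\gr_i(\alpha_N) = 0$ for every $N \in \mathcal{D}$ and every $i$; the statement of the lemma is the case $N = sM$ (or, via the relation $\alpha_{sM} = s(\alpha_M)$ and $\gr_i \circ s \cong s \circ \gr_{i-1}$, equivalently the case $N = M$). The idea is to factor $w_{\leq i}(\alpha_N)$ through a morphism whose source becomes zero after $w_{\geq i}$ is applied, so that $\gr_i(\alpha_N) = w_{\geq i}w_{\leq i}(\alpha_N) = 0$.

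First I would establish a natural isomorphism $w_{\leq i}(sN) \cong s\,w_{\leq i-1}(N)$, coming from $s(F^{\leq i-1}\mathcal{D}) = F^{\leq i}\mathcal{D}$ and Yoneda:
\[
\Hom(Y, w_{\leq i}(sN)) \cong \Hom(s^{-1}Y, N) \cong \Hom(s^{-1}Y, w_{\leq i-1}(N)) \cong \Hom(Y, s\,w_{\leq i-1}(N))
\]
for $Y \in F^{\leq i}\mathcal{D}$ (using $s^{-1}Y \in F^{\leq i-1}\mathcal{D}$), under which the counit $w_{\leq i}(sN) \to sN$ corresponds to $s$ applied to the counit $\epsilon^{i-1} : w_{\leq i-1}(N) \to N$. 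I then claim that
\[
w_{\leq i}(\alpha_N) = c \circ \alpha_{w_{\leq i-1}(N)},
\]
where $c : w_{\leq i-1}(N) \to w_{\leq i}(N)$ is the canonical morphism induced by $F^{\leq i-1}\mathcal{D} \subset F^{\leq i}\mathcal{D}$. To verify this, the adjoint characterization of $w_{\leq i}$ reduces the identity to comparison after post-composition with $\epsilon^i : w_{\leq i}(N) \to N$: the left side gives $\alpha_N \circ s(\epsilon^{i-1})$, while the right side gives $\epsilon^{i-1} \circ \alpha_{w_{\leq i-1}(N)}$, and these agree by naturality of $\alpha$ applied to the morphism $\epsilon^{i-1}$.

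Applying $w_{\geq i}$ to this factorization finishes the proof. Because $w_{\leq i-1}(N) \in F^{\leq i-1}\mathcal{D}$, its canonical triangle $w_{\leq i-1}(w_{\leq i-1}(N)) \to w_{\leq i-1}(N) \to w_{\geq i}(w_{\leq i-1}(N))$ has its first arrow an isomorphism, forcing $w_{\geq i}(w_{\leq i-1}(N)) = 0$; in particular $w_{\geq i}(\alpha_{w_{\leq i-1}(N)}) = 0$, and therefore $\gr_i(\alpha_N) = w_{\geq i}(c) \circ w_{\geq i}(\alpha_{w_{\leq i-1}(N)}) = 0$. The main obstacle is purely bookkeeping in the factorization step, namely keeping straight the identification $w_{\leq i}(sN) \cong s\,w_{\leq i-1}(N)$ and its compatibility with the counits while invoking naturality of $\alpha$; no deeper input is required.
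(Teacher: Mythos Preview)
Your argument is correct, and it takes a genuinely different route from the paper's proof. The paper proceeds by induction on the length of the filtered support of $M$: the base case $M=\gr_n M$ is immediate since then $\gr_i sM = s\,\gr_{i-1}M = 0$ for the only relevant $i$, and the inductive step applies the triangulated functor $\gr_i$ (citing \cite{AT}) to the truncation triangle $w_{\leq n-1}M \to M \to w_{\geq n}M \to$ and to its $s$-shifted analogue, so that the outer vertical maps vanish by induction and commutativity forces $\gr_i(\alpha_{sM})=0$. Your approach instead works directly at a fixed $i$: using the Yoneda identification $w_{\leq i}(sN)\cong s\,w_{\leq i-1}(N)$ and naturality of $\alpha$ with respect to the counit $\epsilon^{i-1}$, you factor $w_{\leq i}(\alpha_N)$ through the object $w_{\leq i-1}(N)$, which is annihilated by $w_{\geq i}$. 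This is more economical in that it avoids both the induction and the appeal to $\gr_i$ being triangulated; the paper's argument is perhaps more transparent if one is already thinking in terms of filtered support, and sidesteps the bookkeeping with counits that you correctly flag as the only delicate point. One minor notational remark: in the paper's convention $\alpha_{sM}$ already denotes the component $sM\to M$, so the lemma is literally your case $N=M$ rather than $N=sM$; you observe this yourself and it has no effect on the argument.
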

\begin{proof} We prove the lemma by induction on the length of filtered support of $M$. If $M = \gr_n M$ for some $n$, the claim follows immediately since $\gr_n sM = s\gr_{n-1}M = 0$. To prove the general case, let $M$ have filtered support $[m,n]$ and consider the morphism of distinguished triangles induced by $\alpha$:
\begin{center}
 \begin{tikzpicture}[description/.style={fill=white,inner sep=2pt}]
\matrix (m) [matrix of math nodes, row sep=3em,
column sep=2.5em, text height=1.5ex, text depth=0.25ex]
{ w_{\leq n-1} sM & sM & w_{\geq n} M & w_{\leq n-1} sM[1]\\
  w_{\leq n-1} M & M & w_{\geq n} M & w_{\leq n-1} M[1]\\ };
\path[->,font=\scriptsize]
(m-1-1) edge (m-1-2)
edge (m-2-1)
(m-1-2) edge (m-1-3)
edge  (m-2-2)
(m-1-3) edge (m-1-4)
edge  (m-2-3)
(m-1-4) edge (m-2-4)
(m-2-1) edge  (m-2-2)
(m-2-2) edge  (m-2-3)
(m-2-3) edge  (m-2-4);
\end{tikzpicture}
\end{center}
Note that the filtered support of $w_{\leq n-1} M$ and $w_{\geq n} M$ has strictly shorter length than that of $M$. The functor $\gr_i$ is triangulated (\cite[Proposition 2.3]{AT}); thus, we have an induced morphism of distinguished triangles:
\begin{center}
 \begin{tikzpicture}[description/.style={fill=white,inner sep=2pt}]
\matrix (m) [matrix of math nodes, row sep=3em,
column sep=2.5em, text height=1.5ex, text depth=0.25ex]
{ \gr_i w_{\leq n-1} sM & \gr_i sM & \gr_i w_{\geq n}sM & \gr_i w_{\leq n-1} sM[1]\\
  \gr_i w_{\leq n-1} M & \gr_i M & \gr_i w_{\geq n} M & \gr_i w_{\leq n-1} M[1]\\ };
\path[->,font=\scriptsize]
(m-1-1) edge (m-1-2)
edge node[description] {$0$} (m-2-1)
(m-1-2) edge (m-1-3)
edge node[description] {$\gr_i\alpha_{sM}$} (m-2-2)
(m-1-3) edge (m-1-4)
edge node[description] {$0$} (m-2-3)
(m-1-4) edge (m-2-4)
(m-2-1) edge (m-2-2)
(m-2-2) edge (m-2-3)
(m-2-3) edge (m-2-4);
\end{tikzpicture}
\end{center}

If $i\leq n-1$, then $\gr_i w_{\geq n}sM = \gr_i w_{\geq n}M = 0$, so the maps $\gr_i w_{\leq n-1}sM \rightarrow \gr_i sM$ and $\gr_i w_{\leq n-1}M \rightarrow \gr_i M$ are isomorphisms. Commutativity of the above squares implies that $\gr_i\alpha_{sM}=0.$ Similar arguments prove the cases $i\geq n+1$ and $i = n$. \end{proof}

We say that a filtered category $\widetilde{\mathcal{D}}$ is a \textit{filtered version} of a triangulated category $\mathcal{D}$ if there is an equivalence $\mathcal{D}\rightarrow\widetilde{\mathcal{D}}^0$ of triangulated categories. Beilinson proves in \cite{B} the existence of a unique functor (up to unique isomorphism) $\omega: \widetilde{\mathcal{D}}\rightarrow\mathcal{D}$ satisfying the following conditions: 
\begin{enumerate}
	\item $\omega\big|_{F^{\geq 0}\widetilde{\mathcal{D}}}$ is left adjoint to the inclusion functor $\mathcal{D}\rightarrow F^{\geq 0}\widetilde{\mathcal{D}}$,
	\item $\omega\big|_{F^{\leq 0}\widetilde{\mathcal{D}}}$ is right adjoint to $\mathcal{D}\rightarrow F^{\leq 0}\widetilde{\mathcal{D}}$,
	\item  and $\omega(\alpha_M):\omega(sM)\rightarrow\omega(M)$ is an isomorphism.
\end{enumerate} We may think of $\omega$ as the functor that forgets the filtration. For $M\in F^{\geq 0}\widetilde{\mathcal{D}}$ and $N\in F^{\leq 0}\widetilde{\mathcal{D}}$, $\omega$ induces an isomorphism
\begin{align}\label{omegaiso}
	\Hom_{\widetilde{\mathcal{D}}}(M, N) \simeq \Hom_{\mathcal{D}}(\omega(M), \omega(N)).
\end{align}

From now on, denote by $\widetilde{\mathcal{D}}$ a \textit{filtered version} of $\DX$. Let $\widetilde{\mathcal{A}}$ be the full subcategory of $\widetilde{\mathcal{D}}$ consisting of objects $M$ with the property that $\gr_i M \in s^i\purex[i]$ for all $i\in\mathbb{Z}$. 
\begin{remark}\label{A-tilde facts} If $w_{\leq n}M$ and $w_{\geq n+1}M$ are both in $\widetilde{\mathcal{A}}$ for some $n\in\mathbb{Z}$, then $M\in\widetilde{\mathcal{A}}$. Also $M\in\widetilde{\mathcal{A}}$ implies that $sM[1]\in\widetilde{\mathcal{A}}$. Thus, if $f:M\rightarrow N$ is a morphism in $\widetilde{\mathcal{A}}$, then the cone of the composition \[sM\stackrel{\alpha}{\rightarrow}M\stackrel{f}{\rightarrow}N\] is also in $\widetilde{\mathcal{A}}$ since Lemma \ref{gralpha} implies the graded pieces are given by $\gr_n cone(f\circ\alpha)=\gr_n sM[1]\oplus \gr_n N$. 
\end{remark}

Here is an outline of the construction of the realization functor: first, we show $\widetilde{\mathcal{A}}$ is equivalent to $\CX$ via the functor $\beta$ (to be defined in \eqref{beta}). The composition $\omega\circ\beta^{-1}$ gives a functor from $\CX$ to $\DX$. Next, we show that this functor takes null-homotopic maps to 0 . Thus, $\omega\circ\beta^{-1}$ factors through $\KX$. 
\begin{center}
\begin{tikzpicture}[description/.style={fill=white,inner sep=2pt}]
\matrix (m) [matrix of math nodes, row sep=3em,
column sep=2.5em, text height=1.5ex, text depth=0.25ex]
{ \CX & \stackrel{\sim}{\mathcal{A}} & \stackrel{\sim}{\mathcal{D}} & \DX \\
  \KX \\ };
\path[->,font=\scriptsize]
(m-1-1) edge node[above] {$\beta^{-1}$} node[below] {$\sim$} (m-1-2)
(m-1-3) edge node[auto] {$\omega$} (m-1-4)
(m-2-1) edge node[below] {$\stackrel{\sim}{\beta}$} (m-1-4);
\path[right hook->] (m-1-2) edge (m-1-3);
\path[dashed, ->] (m-1-1) edge (m-2-1);

\end{tikzpicture}
\end{center} 

\begin{lemma}\label{redundant} Let $M$ and $N$ be objects in $\widetilde{\mathcal{A}}$. Then $\Hom(\gr_n M, w_{\leq n-1} N) = 0$ for all $n\in\mathbb{Z}$
\end{lemma}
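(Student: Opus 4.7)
The plan is to induct on the length of the filtered support of $w_{\leq n-1}N$. Observe first that $w_{\leq n-1}N$ itself lies in $\widetilde{\mathcal{A}}$, since $\gr_k(w_{\leq n-1}N) = \gr_k N$ for $k \leq n-1$ and vanishes otherwise, so each nonzero graded piece is of the required form $s^k(\purex)[k]$.

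For the base case, suppose $w_{\leq n-1}N$ equals a single graded piece $\gr_m N$ with $m \leq n-1$. Write $\gr_n M = s^n(P)[n]$ and $\gr_m N = s^m(Q)[m]$ for some $P, Q \in \purex \subset \widetilde{\mathcal{D}}^0$. Applying the autoequivalence $s^{-n}$, the Hom in question becomes $\Hom_{\widetilde{\mathcal{D}}}(P[n], s^{m-n}Q[m])$. Since $m - n \leq -1 < 0$, the source lies in $F^{\geq 0}\widetilde{\mathcal{D}}$ and the target in $F^{\leq m-n}\widetilde{\mathcal{D}} \subseteq F^{\leq 0}\widetilde{\mathcal{D}}$, so \eqref{omegaiso} applies. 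Combined with the fact that $\omega(\alpha)$ is an isomorphism (so $\omega \circ s \cong \omega$) and that $\omega$ restricted to $\widetilde{\mathcal{D}}^0$ is essentially the identity on $\mathcal{D}$, this rewrites the Hom as $\Hom_{\mathcal{D}}(P, Q[m-n])$, which vanishes by Lemma~\ref{shiftLemma} because $m - n$ is neither $0$ nor $1$.

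For the inductive step, let $k \leq n-1$ be the largest index in the filtered support of $w_{\leq n-1}N$, so that $w_{\leq n-1}N = w_{\leq k}N$. The canonical triangle \eqref{filteredtriangle} applied at level $k$ yields
\[ w_{\leq k-1}N \to w_{\leq k}N \to \gr_k N \to, \]
and applying $\Hom(\gr_n M, -)$ produces a three-term exact sequence whose outer terms vanish: the left by the inductive hypothesis applied to $w_{\leq k-1}N$ (whose filtered support is strictly shorter), and the right by the base case.

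The main technical obstacle is the bookkeeping around the shift of filtration $s$, the triangulated shift $[\cdot]$, and the forgetful functor $\omega$ in the base case; the strict inequality $m - n < 0$ is precisely what is needed both to land in $F^{\leq 0}\widetilde{\mathcal{D}}$ (so \eqref{omegaiso} applies) and to land outside $\{0, 1\}$ (so Lemma~\ref{shiftLemma} gives vanishing).
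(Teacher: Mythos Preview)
Your proof is correct and follows the route the paper intends: the paper's own proof is the single sentence ``This is a direct consequence of Lemma~\ref{shiftLemma},'' and your induction on the length of the filtered support of $w_{\leq n-1}N$ is exactly the natural way to unpack that sentence. The base case reduces, via \eqref{omegaiso} and $\omega\circ s\cong\omega$, to $\Hom_{\mathcal{D}}(P,Q[m-n])$ with $m-n<0$, where Lemma~\ref{shiftLemma} applies; the inductive step is the standard d\'evissage along the triangle \eqref{filteredtriangle}.
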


\begin{proof} This is a direct consequence of Lemma \ref{shiftLemma}.
%\Hom(i gr_0M,w_{\leq}N)\cong\Hom(gr_0M,\w_{\geq -1}w_{\leq -1}N) \cong \Hom(gr_0 M, gr_{-1} N)\cong \Hom(\omega(gr_0 M), \omega(gr_{-1} N)) = 0
\end{proof}

Our situation differs from Beilinson's in that neither $\purex$ nor $\widetilde{\mathcal{A}}$ is the heart of a $t$-structure. In particular, we need the following lemma.

\begin{lemma}\label{faithful}
Let $f: M\rightarrow N$ be a morphism in $\widetilde{\mathcal{A}}$. Then $f=0$ if and only if $\gr_i f = 0$ for all $i\in\mathbb{Z}$.
\end{lemma}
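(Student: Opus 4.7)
The forward direction is immediate since $\gr_i$ is a triangulated functor. For the reverse, I would induct on the length of the filtered support of $M$. In the base case $M \in \mathcal{D}^n$ for some $n$, applying $\Hom(M,-)$ to the distinguished triangle $w_{\leq n-1} N \to N \to w_{\geq n} N$ yields an exact sequence
\[
\Hom(M, w_{\leq n-1} N) \to \Hom(M, N) \to \Hom(M, w_{\geq n} N).
\]
The left term vanishes by Lemma \ref{redundant} (since $\gr_n M = M$ here), and the right term is identified with $\Hom(M, \gr_n N)$ via the adjunction $\Hom(M,X) \cong \Hom(M, w_{\leq n} X)$ valid for $M \in F^{\leq n}\widetilde{\mathcal{D}}$. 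Under these identifications, the resulting injection $\Hom(M,N) \hookrightarrow \Hom(M, \gr_n N)$ sends $f$ to $\gr_n f$, so $\gr_n f = 0$ forces $f = 0$.

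For the inductive step, pick $n$ so that both $w_{\leq n-1} M$ and $w_{\geq n} M$ are nonzero, and form---via functoriality of $w_{\leq n-1}$ and $w_{\geq n}$---the morphism of distinguished triangles from $w_{\leq n-1} M \to M \to w_{\geq n} M$ to $w_{\leq n-1} N \to N \to w_{\geq n} N$ with vertical maps $f' := w_{\leq n-1}(f)$, $f$, and $f'' := w_{\geq n}(f)$. Using that $\gr_i w_{\leq n-1}$ equals $\gr_i$ for $i \leq n-1$ and vanishes for $i \geq n$ (and the symmetric statement for $w_{\geq n}$), the hypothesis $\gr_i f = 0$ passes to both $f'$ and $f''$. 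Since $w_{\leq n-1} M$ and $w_{\geq n} M$ lie in $\widetilde{\mathcal{A}}$ by Remark \ref{A-tilde facts} and have strictly shorter filtered support than $M$, the inductive hypothesis gives $f' = f'' = 0$.

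The principal obstacle---since $\widetilde{\mathcal{A}}$ is not the heart of a $t$-structure---is the final step: deducing $f = 0$ from a morphism of distinguished triangles whose outer arrows both vanish. The ambiguity in such a middle arrow is parametrized by $\Hom(w_{\geq n} M, w_{\leq n-1} N)$, so it suffices to show this Hom group is zero. I would argue by dévissage along the filtrations of both objects: they are iterated extensions of their graded pieces, which (after applying $\omega$) lie in $\purex[i]$ with $i \geq n$ and $\purex[j]$ with $j \leq n-1$ respectively. A shifted version of the isomorphism \eqref{omegaiso} reduces everything to Homs of the form $\Hom(\purex, \purex[k])$ with $k = j - i \leq -1$, which vanish by Lemma \ref{shiftLemma}. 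This closes the induction.
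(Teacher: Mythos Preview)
Your proof is correct, but it takes a genuinely different route from the paper's argument.

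The paper proceeds by a ``one step at a time'' induction: it establishes the implication $w_{\leq n-1}f = 0 \Rightarrow w_{\leq n}f = 0$ via a careful two-diagram chase that uses only the single-degree vanishing $\Hom(\gr_n M, w_{\leq n-1}N)=0$ from Lemma~\ref{redundant}, then applies this implication repeatedly starting from $w_{\leq m}f=\gr_m f=0$ at the bottom of the filtered support. Your argument instead splits $M$ in two, applies the inductive hypothesis to \emph{both} truncations to obtain $w_{\leq n-1}f=0$ and $w_{\geq n}f=0$ simultaneously, and then invokes the stronger vanishing $\Hom(w_{\geq n}M, w_{\leq n-1}N)=0$, which you deduce by d\'evissage from Lemma~\ref{shiftLemma}.

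Two remarks. First, your claim that the set of middle arrows compatible with zero outer arrows is parametrized by $\Hom(w_{\geq n}M, w_{\leq n-1}N)$ is correct but deserves a line of justification: given $b$ with $bu=0$ and $v'b=0$, write $b=tv$ (from $bu=0$), then use axiom~(1) of the filtered category to see that precomposition with $v$ gives an isomorphism $\Hom(w_{\geq n}M, w_{\geq n}N)\to\Hom(M, w_{\geq n}N)$, whence $v'tv=0$ forces $v't=0$, so $t=u'r$ for some $r\in\Hom(w_{\geq n}M, w_{\leq n-1}N)$. Second, your d\'evissage can be shortened: it suffices to filter only the first variable, since $\Hom(\gr_i M, w_{\leq n-1}N)=0$ for $i\geq n$ is exactly Lemma~\ref{redundant} (applied with $i$ in place of $n$ and $w_{\leq n-1}N\in\widetilde{\mathcal{A}}$ in place of $N$).

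What each approach buys: the paper's diagram chase is more elementary in that it never needs the full vanishing of $\Hom(w_{\geq n}M, w_{\leq n-1}N)$, only the graded-piece version. Your argument is more structural and makes the obstruction group explicit, at the cost of the extra d\'evissage step.
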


\begin{proof}First we show that $w_{\leq n-1} f = 0$ implies that $w_{\leq n} f = 0$.  Consider the following morphism of distinguished triangles:
\begin{center}
 \begin{tikzpicture}[description/.style={fill=white,inner sep=2pt}]
\matrix (m) [matrix of math nodes, row sep=3em,
column sep=2.5em, text height=1.5ex, text depth=0.25ex]
{ w_{\leq n-1} M & w_{\leq n} M & \gr_n M & w_{\leq n-1} M[1]\\
  w_{\leq n-1} N & w_{\leq n} N & \gr_n N & w_{\leq n-1} N[1]\\ };
\path[->,font=\scriptsize]
(m-1-1) edge node[auto] {$u$} (m-1-2)
edge node[description] {$ w_{\leq n-1}f=0 $} (m-2-1)
(m-1-2) edge node[auto] {$v$} (m-1-3)
edge node[description] {$w_{\leq n}f $} (m-2-2)
(m-1-3) edge node[auto] {$w$} (m-1-4)
edge node[description] {$ \gr_n f=0$} (m-2-3)
(m-1-4) edge (m-2-4)
(m-2-1) edge node[auto] {$ u' $} (m-2-2)
(m-2-2) edge node[auto] {$ v' $} (m-2-3)
(m-2-3) edge node[auto] {$ w' $} (m-2-4);
\end{tikzpicture}
\end{center} Since the squares commute, we see that $v'w_{\leq n}f = 0$. Thus, there is a morphism $s$ in $\Hom(w_{\leq n} M, w_{\leq n-1} N)$ so that $u's = w_{\leq n}f$. Similarly, since $w_{\leq n}f u = 0$, there exists $t$ in $\Hom(\gr_n M, w_{\leq n} N)$ so that $tv = w_{\leq n}f$. This gives a morphism of distinguished triangles.

\begin{center}
 \begin{tikzpicture}[description/.style={fill=white,inner sep=2pt}]
\matrix (m) [matrix of math nodes, row sep=3em,
column sep=2.5em, text height=1.5ex, text depth=0.25ex]
{ w_{\leq n-1} M & w_{\leq n} M & \gr_n M & w_{\leq n-1} M[1]\\
  \gr_n N [-1] & w_{\leq n-1} N & w_{\leq n} N & \gr_n N \\ };
\path[->,font=\scriptsize]
(m-1-1) edge node[auto] {$u$} (m-1-2)
edge node[description] {$ h $} (m-2-1)
(m-1-2) edge node[auto] {$v$} (m-1-3)
edge node[description] {$s$} (m-2-2)
(m-1-3) edge node[auto] {$w$} (m-1-4)
edge node[description] {$t$} (m-2-3)
(m-1-4) edge node[description] {$ h[1]$} (m-2-4)
(m-2-1) edge node[auto] {$ -w' $} (m-2-2)
(m-2-2) edge node[auto] {$ u' $} (m-2-3)
(m-2-3) edge node[auto] {$ v' $} (m-2-4);
\end{tikzpicture}
\end{center} Now, we have that $h = 0$ and $h[1] = 0$ since $\Hom(w_{\leq n-1}M, \gr_n N [-1])=0$ by property (1) of the filtered derived category. Thus, we see that $v't=0$. Next, we apply the functor $\Hom(\gr_n M, - )$ to the bottom distinguished triangle to get the exact sequence
	\[
\Hom(\gr_n M, w_{\leq n-1}N)\stackrel{u'}{\longrightarrow}\Hom(\gr_n M, w_{\leq n}N)\stackrel{v'}{\longrightarrow}\Hom(\gr_n M, \gr_n N).
\] We have that $v't=0$; thus, $t\in$ Ker $v'$ = Im $u'$. However, Lemma \ref{redundant} implies that $\Hom(\gr_n M, w_{\leq n-1} N)=0$. Thus, $t=0$ and hence, $w_{\leq n} f = 0$.

Let $m=\textup{max}\{i\mid M\in F^{\geq i}\widetilde{\mathcal{D}}\}$ and $n=\textup{min}\{i\mid M\in F^{\leq i}\widetilde{\mathcal{D}}\}$. We proceed by induction on the length of the interval [$m$,$n$]. If $m=n$, then $M = \gr_n M$. In this case, $f = \gr_n f = 0$ by hypothesis. If $m<n$, then $w_{\leq m}f = \gr_m f = 0$. The above argument and induction implies that $w_{\leq n}f = 0$, but $w_{\leq n}f = f$. \end{proof}

Now we define the functor 
\begin{align}\label{beta}
	\beta:\widetilde{\mathcal{A}} \rightarrow \CX.
\end{align} Let $M$ be an object in $\widetilde{\mathcal{A}}$. Let $\beta(M)$ be the chain complex $M^\bullet$ with $M^i = \omega(\gr_{-i}M)[i]=\gr_0(s^iM)[i]$ and differential $\delta^i:M^i\rightarrow M^{i+1}$ given by the third morphism in the functorial distinguished triangle	
	\[ \omega(\gr_{-i-1}M)[i]\rightarrow\omega(w_{\geq-i-1}w_{\leq-i}M)[i]\rightarrow\omega(\gr_{-i}M)[i]\stackrel{\delta^i}{\rightarrow}\omega(\gr_{-i-1}M)[i+1].
\]
\begin{lemma} The functor $\beta$ takes $M$ to a chain complex $M^\bullet$ with differential $\delta$.
\end{lemma}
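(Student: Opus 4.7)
The plan is to verify the only substantive claim, namely $\delta^{i+1} \circ \delta^i = 0$, the fact that each $M^i$ lies in $\purex$ and that $\delta^i$ is a well-defined morphism there being immediate from the definition of $\widetilde{\mathcal{A}}$ (together with $\omega \circ s \cong \omega$). Since $\omega$ is triangulated and commutes with shifts, the composition $\delta^{i+1}\circ\delta^i$ is (a shift of) $\omega$ applied to the composition of unshifted connecting morphisms
$$\partial^{i+1}[1] \circ \partial^i : \gr_{-i}M \longrightarrow \gr_{-i-1}M[1] \longrightarrow \gr_{-i-2}M[2]$$
in $\widetilde{\mathcal{D}}$, where $\partial^j : \gr_{-j}M \to \gr_{-j-1}M[1]$ is the connecting arrow of the triangle $\gr_{-j-1}M \to w_{\geq -j-1}w_{\leq -j}M \to \gr_{-j}M \to \gr_{-j-1}M[1]$ defining $\delta^j$. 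So it suffices to show this composition is zero.

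The key input will be the octahedral axiom applied to the three-step filtration $w_{\leq -i-2}M \to w_{\leq -i-1}M \to w_{\leq -i}M$. The resulting octahedron identifies the middle cone as $w_{\geq -i-1}w_{\leq -i}M$ and, crucially, shows that $\partial^i$ factors as
$$\gr_{-i}M \xrightarrow{\tilde\partial^i} w_{\leq -i-1}M[1] \xrightarrow{\pi[1]} \gr_{-i-1}M[1],$$
where $\tilde\partial^i$ is the connecting arrow of the triangle $w_{\leq -i-1}M \to w_{\leq -i}M \to \gr_{-i}M$ and $\pi \colon w_{\leq -i-1}M \to \gr_{-i-1}M$ is the canonical projection appearing in the triangle $w_{\leq -i-2}M \to w_{\leq -i-1}M \to \gr_{-i-1}M \xrightarrow{\tilde\partial^{i+1}} w_{\leq -i-2}M[1]$.

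With this factorization in hand, $\partial^{i+1}[1] \circ \partial^i$ will contain the subsequence
$$w_{\leq -i-1}M[1] \xrightarrow{\pi[1]} \gr_{-i-1}M[1] \xrightarrow{\tilde\partial^{i+1}[1]} w_{\leq -i-2}M[2],$$
which is the $[1]$-shift of two consecutive arrows in a distinguished triangle and is therefore automatically zero. This yields the desired vanishing. The main (mild) obstacle will be the octahedral bookkeeping needed to identify the defining formula for $\partial^i$ with the two-step factored form above; once this identification is in hand, the rest of the argument is formal.
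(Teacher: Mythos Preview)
Your argument is correct and is in fact the classical ``two consecutive arrows in a triangle compose to zero'' proof that the associated graded of a filtered object forms a complex. The only point requiring care, as you note, is identifying the connecting map $\partial^i$ in the \emph{functorial} triangle \eqref{filteredtriangle} (applied to $w_{\geq -i-1}w_{\leq -i}M$) with the composite $\pi[1]\circ\tilde\partial^i$. This follows cleanly by applying the functoriality of \eqref{filteredtriangle} at level $n=-i$ to the adjunction morphism $w_{\leq -i}M\to w_{\geq -i-1}w_{\leq -i}M$: the induced morphism of triangles has $\pi$ as its left vertical arrow and the identity on $\gr_{-i}M$ as its third vertical arrow, and the rightmost square is exactly the desired factorization. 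So no genuine obstacle remains.

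The paper takes a different route. Rather than factoring $\partial^i$ through $w_{\leq -i-1}M[1]$, it applies the octahedral axiom \emph{twice} to two factorizations of the single map $w_{\geq -i-2}w_{\leq -i-1}M\to w_{\geq -i-1}w_{\leq -i}M$ (once through $\gr_{-i-1}M$, once through $w_{\geq -i-2}w_{\leq -i}M$). Letting $N$ be the cone of this map, the two octahedra produce triangles
\[
\gr_{-i-2}M[1]\to N\to\gr_{-i}M\xrightarrow{q}\gr_{-i-2}M[2]
\quad\text{and}\quad
\gr_{-i}M\to N\to\gr_{-i-2}M[1]\to\gr_{-i}M[1],
\]
with $\omega(q)[i]=\delta^{i+1}\circ\delta^i$. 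The second triangle splits because $\Hom(\gr_{-i-2}M,\gr_{-i}M)=0$ (property~(1) of the filtration), and the paper deduces that the first triangle splits as well, forcing $q=0$. Your approach is more direct and avoids the splitting step entirely; the paper's double-octahedron argument, by contrast, identifies $q$ with the composite $\partial^{i+1}[1]\circ\partial^i$ in one blow but then needs the $\Hom$-vanishing to conclude.
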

\begin{proof} It is sufficient to show that the composition $\delta^{i+1}\circ\delta^i = 0$. Consider the following commutative diagrams.
%{\scalefont{0.6}
\begin{center}
\begin{tikzpicture}
\small
\node (A) at (1.5, 0) {$\gr_{-i-1}M$};
\node (B) at (8.5, 0) {$w_{\geq -i-2}w_{\leq -i}M$}; 
\node (C) at (0, -1.5) {$w_{\geq -i-2}w_{\leq -i-1}M$}; 
\node (D) at (3,-1.5) {$w_{\geq -i-1}w_{\leq -i}M$};
\node (E) at (7, -1.5) {$w_{\geq -i-2}w_{\leq -i-1}M$};
\node (F) at (10, -1.5) {$w_{\geq -i-1}w_{\leq -i}M$};
\draw[->] (A) -- (D);
\draw[->] (C) -- (A); 
\draw[->] (C) -- (D);
\draw[->] (B)	-- (F);
\draw[->] (E) -- (B);
\draw[->] (E) -- (F);

\end{tikzpicture}
\end{center}
Let $N$ be the cone of the morphism $w_{\geq -i-2}w_{\leq -i-1}M\rightarrow w_{\geq -i-1}w_{\leq -i}M$. The octahedral axiom applied to each diagram yields two distinguished triangles
\[
 \gr_{-i-2}M[1]\rightarrow N\rightarrow \gr_{-i}M\stackrel{q}{\rightarrow}\gr_{-i-2}M[2],\]
\[		 \gr_{-i}M\rightarrow N\rightarrow \gr_{-i-2}M[1]\rightarrow\gr_{-i}M[1].\] Note that the second triangle splits since $\Hom(\gr_{-i-2}M, \gr_{-i}M)=0$. This implies that the first triangle must split as well, so $q=0$. Thus, our composition $\delta^{i+1}\circ\delta^i =0$ since  $\delta^{i+1}\circ\delta^i = \omega(q)[i]$. \end{proof}

\begin{proposition}\label{realization} Let $\widetilde{\mathcal{A}}$ be defined as above, and assume that $\purex$ is Frobenius invariant.
\begin{enumerate}
	\item The functor $\beta : \widetilde{\mathcal{A}} \rightarrow \CX$ is an equivalence of additive categories.
	\item The composition $\omega\circ\beta^{-1}:\CX\rightarrow\DX$ factors through the category $\KX$ and induces a functor $\widetilde{\beta}:\KX\rightarrow\DX$ such that the restriction
	\[\KX\big|_{\purex} : \purex\rightarrow \DX\] is isomorphic to the inclusion functor.
	%\item $\widetilde{\beta}$ is faithful: for all $M, N\in\KX$, we have that $\Hom_{\KX}(M,N)\hookrightarrow\Hom_{\DX}(\widetilde{\beta}M, \widetilde{\beta}N)$.
\end{enumerate}
\end{proposition}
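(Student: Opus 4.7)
The approach follows Beilinson's realization-functor paradigm from \cite{B}, adapted to the present non-$t$-structure setting by leveraging the filtered structure on $\widetilde{\mathcal{D}}$. The proof of (1) splits into three pieces---essential surjectivity, fullness, and faithfulness---while (2) rests on a null-homotopy argument and a quick check on the image of $\purex$.

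For essential surjectivity of $\beta$, I would induct on the length of the support of a complex $M^\bullet\in\CX$. The base case is a complex concentrated in a single degree $i$, for which the object $s^{-i}M^i[-i]\in\widetilde{\mathcal{D}}^{-i}\subset\widetilde{\mathcal{A}}$ tautologically realizes $M^\bullet$. For the inductive step with support $[a,b]$, I would take $M'\in\widetilde{\mathcal{A}}$ realizing the stupid truncation $\sigma_{\geq a+1}M^\bullet$ and assemble $M$ from the distinguished triangle $M'\to M\to s^{-a}M^a[-a]\to M'[1]$, where the connecting morphism is chosen to encode $d^a$. The existence of a morphism $s^{-a}M^a[-a]\to M'[1]$ whose graded piece in degree $-a-1$ recovers $d^a$ follows from analyzing the relevant $\Hom_{\widetilde{\mathcal{D}}}$-groups layer-by-layer using property~(1) of the filtered structure together with Lemma~\ref{shiftLemma}, which kills the higher gluing obstructions inside $\purex$.

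Faithfulness is immediate from Lemma~\ref{faithful}: $\beta(f)=0$ iff $\gr_if=0$ for all $i$, iff $f=0$. For fullness, given a chain map $\phi\colon\beta(M)\to\beta(N)$, I would inductively produce a lift $f\colon M\to N$; each component $\phi^i$ lifts canonically via the $\omega$-adjunction \eqref{omegaiso} to a morphism between the graded pieces, and the chain-map condition ensures these lifts assemble consistently across the filtered triangles, with Lemma~\ref{shiftLemma} once again killing the compatibility obstructions.

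For (2), the key claim is that $\omega\circ\beta^{-1}$ sends null-homotopic chain maps to zero. Given $f\colon M\to N$ in $\widetilde{\mathcal{A}}$ with $\beta(f)$ null-homotopic via $h=(h^i)$, I would use \eqref{omegaiso} and \eqref{sisom} to lift the homotopy data into morphisms of filtered objects (roughly, into a morphism $M\to s^{-1}N[-1]$), and then decompose $\omega(f)$ as a telescoping expression in which each term factors through $\omega(\alpha_{(-)})$. The fact that $\omega$ collapses successive $s$-shifts, combined with the vanishing in Lemma~\ref{gralpha}, should yield $\omega(f)=0$. I expect this step to be the main obstacle: one must carefully track the filtered shifts so that the null-homotopy data assembles into a coherent cancellation rather than a random collection of morphisms. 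Finally, for $X\in\purex$ regarded as a degree-$0$ complex in $\CX$, the inverse $\beta^{-1}$ returns $X\in\widetilde{\mathcal{D}}^0\subset\widetilde{\mathcal{A}}$, and $\omega$ restricted to $\widetilde{\mathcal{D}}^0$ is the defining equivalence with $\DX$; hence $\widetilde{\beta}|_{\purex}$ is naturally isomorphic to the inclusion $\purex\hookrightarrow\DX$.
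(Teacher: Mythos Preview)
Your treatment of part (1) is broadly in line with the paper, though the order differs: the paper establishes fullness first (by induction on filtered support, completing a commuting square involving $\tilde{q}$ and $\alpha$ to a morphism of triangles) and then deduces essential surjectivity using full faithfulness to lift the connecting map $q$. Your route---essential surjectivity first, via a direct analysis of $\Hom_{\widetilde{\mathcal{D}}}$-groups---is workable but more laborious, since you cannot yet appeal to $\beta$ being full when you need to produce the map $s^{-a}M^a[-a]\to M'[1]$.

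Part (2), however, contains a genuine gap. Your plan is to factor $\omega(f)$ through terms involving $\omega(\alpha_{(-)})$ and then invoke Lemma~\ref{gralpha} to get zero. But Lemma~\ref{gralpha} says $\gr_i(\alpha)=0$, which is a statement about $\gr$, not about $\omega$; in fact, by the defining property of $\omega$, the map $\omega(\alpha_M):\omega(sM)\to\omega(M)$ is an \emph{isomorphism}. So factoring through $\omega(\alpha)$ gives you nothing. The paper's argument is quite different and does not attempt to lift the homotopy directly. Instead, it passes to cones: if $\tilde{f}:\tilde{M}\to\tilde{N}$ corresponds to $f$, one forms the cone $\tilde{Z}$ of the composite $s\tilde{M}\xrightarrow{\alpha}\tilde{M}\xrightarrow{\tilde{f}}\tilde{N}$, which lies in $\widetilde{\mathcal{A}}$ by Remark~\ref{A-tilde facts} and satisfies $\beta(\tilde{Z})\cong\operatorname{cone}(f)$. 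A null-homotopy of $f$ yields a chain map $\operatorname{cone}(f)\to N^\bullet$ retracting the inclusion; since $\beta$ is full, this lifts to a retraction $\tilde{Z}\to\tilde{N}$, splitting the triangle $\tilde{N}\to\tilde{Z}\to s\tilde{M}[1]\xrightarrow{(\tilde{f}\circ\alpha)[1]}\tilde{N}[1]$ in $\widetilde{\mathcal{D}}$. Hence $\tilde{f}\circ\alpha=0$, and applying $\omega$ gives $\omega(\tilde{f})\circ\omega(\alpha)=0$; now the fact that $\omega(\alpha)$ is invertible forces $\omega(\tilde{f})=0$. So the role of $\alpha$ is exactly opposite to what you proposed: it is not that $\omega(\alpha)$ vanishes, but that it is invertible and can be cancelled.
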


\begin{proof}
To show the equivalence, we must show that $\beta$ is full, faithful, and essentially surjective. Lemma \ref{faithful} implies that $\beta$ is faithful. We prove fullness by induction on filtered support. Let $M$ and $N$ be objects in $\widetilde{\mathcal{A}}$. First, we assume that $M, N\in\widetilde{\mathscr{D}}^n$ for some $n\in\mathbb{Z}$. Then $\beta(M)$ and $\beta(N)$ are chain complexes concentrated in degree $-n$ and it follows from the isomorphism \eqref{omegaiso} that we have an isomorphism
\[ \Hom_{\widetilde{\mathcal{D}}}(M, N) \simeq \Hom_{\CX}(\beta(M), \beta(N)).\] Now suppose that $M, N\in F^{\geq m}\widetilde{\mathcal{D}}\cap F^{\leq n}\widetilde{\mathcal{D}}.$ We consider the truncations given by
\[ M' = w_{\leq n-1}M,\hspace{.7cm} M'' = w_{\geq n}M,\hspace{.7cm} N' = w_{\leq n-1}N,\hspace{.7cm} N'' = w_{\geq n}N.\] %Note that as a chain complex, $\beta(M')$ is the same as $\beta(M)$ except in degree $-n$ 
Let $q:\beta(M'')[-1]\rightarrow\beta(M')$ be the chain map induced by the differential $\delta^{-n}:\beta(M)^{-n}\rightarrow\beta(M)^{-n+1}$. We will need to make use of a lift $\tilde{q}$ of $q$ to $\widetilde{\mathscr{D}}$. Note that $\beta(M'')[-1] \cong \beta(s^{-1}M''[-1])$. Let $\tilde{q}: s^{-1}M''[-1]\rightarrow M'$ be the natural map obtained by applying the isomorphism \eqref{sisom} to the first map of the distinguished triangle $M''[-1]\rightarrow M'\rightarrow M\rightarrow M''$. It is easy to see that $\beta(\tilde{q}) = q$. 

Let $f\in\Hom_{\CX}(\beta(M), \beta(N))$, and let $f':\beta(M')\rightarrow\beta(N')$ and $f'':\beta(M'')\rightarrow\beta(N'')$ be the induced chain maps. The diagram
\begin{center}
 \begin{tikzpicture}[description/.style={fill=white,inner sep=2pt}]
\matrix (m) [matrix of math nodes, row sep=3em,
column sep=2.5em, text height=1.5ex, text depth=0.25ex]
{ \beta(M'')[-1] & \beta(M')\\
  \beta(N'')[-1] & \beta(N')\\ };
\path[->,font=\scriptsize]
(m-1-1) edge node[auto] {$q$} (m-1-2)
edge node[auto] {$ f''[-1] $} (m-2-1)
(m-1-2) edge node[auto] {$f'$} (m-2-2)
(m-2-1) edge node[auto] {$ q $} (m-2-2);
\end{tikzpicture}
\end{center}
commutes since $f$ is a chain map. By induction, there are morphisms $\tilde{f}':M'\rightarrow N'$ and $\tilde{f}'':s^{-1}M''[-1]\rightarrow s^{-1}N''[-1]$ such that $\beta(\tilde{f}') = f'$ and $\beta(\tilde{f}'') = f''[-1]$. Consider the diagram
\begin{center}
 \begin{tikzpicture}[description/.style={fill=white,inner sep=2pt}]
\matrix (m) [matrix of math nodes, row sep=3em,
column sep=2.5em, text height=1.5ex, text depth=0.25ex]
{ M''[-1] & s^{-1}M''[-1]& M'\\
  N''[-1] & s^{-1}N''[-1]& N' \\ };
\path[->,font=\scriptsize]
(m-1-1) edge node[auto] {$\alpha$} (m-1-2)
edge node[auto] {$ s\tilde{f}'' $} (m-2-1)
(m-1-2) edge node[auto] {$\tilde{q}$} (m-1-3)
edge node[auto] {$\tilde{f}''$} (m-2-2)
(m-1-3) edge node[auto] {$\tilde{f}'$} (m-2-3)
(m-2-1) edge node[auto] {$ \alpha $} (m-2-2)
(m-2-2) edge node[auto] {$ \tilde{q} $} (m-2-3);
\end{tikzpicture}
\end{center}
Since $\alpha$ is a natural transformation, we see that the left-hand square commutes. The right-hand square commutes because $\beta$ is faithful. Thus, the outer square commutes as well, so we may complete it to a morphism of distinguished triangles
\begin{center}
 \begin{tikzpicture}[description/.style={fill=white,inner sep=2pt}]
\matrix (m) [matrix of math nodes, row sep=3em,
column sep=2.5em, text height=1.5ex, text depth=0.25ex]
{ M''[-1] & M' & M & M''\\
  N''[-1] & N' & N & N''\\};
\path[->,font=\scriptsize]
(m-1-1) edge (m-1-2)
edge node[left] {$ s\tilde{f}'' $} (m-2-1)
(m-1-2) edge (m-1-3)
edge node[left] {$\tilde{f}'$} (m-2-2)
(m-1-3) edge (m-1-4)
(m-2-1) edge (m-2-2)
(m-2-2) edge (m-2-3)
(m-2-3) edge (m-2-4)
(m-1-4) edge (m-2-4);
\path[dashed, ->] (m-1-3) edge node[right] {$\tilde{f}$} (m-2-3);
\end{tikzpicture}
\end{center} We have $\beta(\tilde{f}) = f$, so $\beta$ is full.

A similar argument proves that $\beta$ is essentially surjective. It is easy to see that any chain complex concentrated in a single degree is in the image of $\beta$. Now, if $M^\bullet\in \CX$ such that $M^i = 0$ except when $-n\leq i \leq -m$, then the differential $\delta^{-n}$ induces a chain map $q: M''[-1]\rightarrow M'$, where $M''$ is concentrated in degree $-n$ and $M'$ vanishes  except in degrees $-n+1, \ldots, -m$. By induction, we have objects $\tilde{M}'', \tilde{M}'\in\tilde{\mathscr{A}}$ so that $\beta(\tilde{M}'') = M''$ and $\beta(\tilde{M}') = M'$. Since $\beta$ is fully faithful, we have a morphism $\tilde{q}: \tilde{M}''\rightarrow \tilde{M}'$ with $\beta(\tilde{q}) = q$. Let $\tilde{M}$ be the cone of the morphism $\tilde{q}\circ\alpha:s\tilde{M}''\rightarrow\tilde{M}'$. Then $\beta(\tilde{M})\cong M$. 

Now, we consider part (2). Let $f:M^\bullet\rightarrow N^\bullet$ be a morphism in $\CX$, corresponding via $\beta$  to $\tilde{f}:\tilde{M}\rightarrow\tilde{N}$. Let $Z^\bullet$ denote the cone of $f$, and let $\tilde{Z}$ denote the cone of the composition \[s\tilde{M}\stackrel{\alpha}{\rightarrow}\tilde{M}\stackrel{\tilde{f}}{\rightarrow}\tilde{N}.\] Note that $\tilde{Z}\in\widetilde{\mathcal{A}}$ by Remark \ref{A-tilde facts}. Since $\omega(sM)\cong\omega(M)$, we see that $\omega\circ\beta^{-1}$ takes the diagram $M^\bullet\rightarrow N^\bullet\rightarrow Z^\bullet \rightarrow M^\bullet [1]$ to a distinguished triangle in $\DX$. If $f$ is null-homotopic, then the homotopy induces a chain map $Z^\bullet\rightarrow N^\bullet$ which induces a splitting of the triangle \[\tilde{N}\rightarrow\tilde{Z}\rightarrow s\tilde{M}[1]\stackrel{\tilde{f}\circ\alpha[1]}{\rightarrow}\tilde{N}[1]\] in $\widetilde{\mathcal{D}}$.  Thus, $\omega\circ\beta^{-1}(f)=0$. \end{proof}
\begin{remark}We assumed that $\purex$ was Frobenius invariant. However, the above construction still holds if we replace \textit{Frobenius invariance} with the weaker condition that $\uHom^i(S, S')$ is pure of weight $i$ for all $i\in\mathbb{Z}$ and $S, S'\in\mathscr{S}$.
\end{remark}

\section{Mixedness}
\label{section:Mix}

\subsection{Mixed and Orlov categories.}
Let $\mathscr{M}$ be a finite-length abelian category. As in \cite[Definition 4.1.1]{BGS}, a \textit{mixed structure} on $\mathscr{M}$ is a function $\textup{wt}:\textup{Irr}(\mathscr{M})\rightarrow\mathbb{Z}$ such that
\begin{equation}
\label{eq:mixed}
\textup{Ext}^1(S,S')=0 \textup{ if } S, S' \textup{ are simple objects with wt}(S)\leq \textup{wt}(S').
\end{equation}
As in \cite[Section 2.2]{AR}, we can extend the notion of a mixed structure to a triangulated category in the following way. Let $\mathscr{D}$ be a triangulated category with a bounded $t$-structure whose heart is $\mathscr{M}$. A \textit{mixed structure} on $\mathscr{D}$ is a mixed structure on $\mathscr{M}$ satisfying
\begin{equation}
\label{eq:mixedtriang}
\Hom_{\mathscr{D}}^i(S,S')=0 \textup{ if } S, S'\in\mathscr{M} \textup{ are simple objects with wt}(S)< \textup{wt}(S') +i.
\end{equation}

Let $\mathscr{A}$ be an additive category and $\textup{Ind}(\mathscr{A})$ be the set of isomorphism classes of indecomposable objects in $\mathscr{A}$. The category $\mathscr{A}$, equipped with a function $\textup{deg}: \textup{Ind}(\mathscr{A})\rightarrow\mathbb{Z}$, is called an \textit{Orlov category} (see \cite[Definition 4.1]{AR}) if the following conditions hold:
\begin{enumerate}
	\item All Hom-spaces in $\mathscr{A}$ are finite-dimensional.
	\item For any $S\in \textup{Ind}(\mathscr{A})$, we have $\textup{End}(S)\cong\Ql$.
	\item If $S,S'\in \textup{Ind}(\mathscr{A})$ with $\deg(S)\leq\deg(S')$ and $S\not\cong S'$, then Hom($S$,$S'$)=0.
\end{enumerate} According to \cite[Proposition 5.4]{AR}, the homotopy category of an Orlov category $\orlov$ has a natural $t$-structure whose heart is a finite-length abelian category containing irreducibles given by $A[\deg(A)]$ for $A\in\textup{Ind}(\mathscr{A}).$ Also, the function $\textup{wt}(A[\deg(A)]) = \deg(A)$ makes $\orlov$ into a mixed category. 

\begin{remark}The category $\purex$ is Orlov. An indecomposable object in $\purex$ is given by $S[2m](m)$ where $S$ is a simple perverse sheaf in $\mathscr{S}$. We define the degree function by deg($S[2m](m)$)=$-2m$. To see that this degree function makes $\purex$ into an Orlov category, we simply note that for $S, S'\in\mathscr{S}$ with $S[2m](m)\not\cong S'[2n](n)$
	\[ \Hom_{\purex}(S[2m](m), S'[2n](n)) = \Hom^{2n-2m}_{\DX}(S(m), S'(n)).
\]
When $-2m<-2n$, $2n-2m$ is negative, and this vanishes since $S(m)$ and $S'(n)$ are objects in the heart of a $t$-structure on $\DX$. If $-2m=-2n$, this vanishes since we assume that $S[2m](m)\not\cong S'[2n](n)$ implying that $S$ and $S'$ are nonisomorphic simple objects. \end{remark}
 
We will denote the heart of $\KX$ by $\PMX$. The simple objects in $\PMX$ are given by $(S[2i](i))[-2i]$ for any $S\in\mathscr{S}, i\in\mathbb{Z}$. Note that the two shifts do not cancel since they occur in different triangulated categories. By \cite{AR}, the category $\PMX$ is a mixed category with weight function $\textup{wt}(S[2i](i)[-2i]) = -2i$ and a degree 2 Tate twist $\langle 2\rangle:=[-2](-1)[2]$. In the remainder of this section, we will show that $\KX$ is a mixed version of its analogue over $\bar{\mathbb{F}}_q$. This is defined as follows:

\begin{definition} Let $\mathscr{D}$ and $\mathscr{D}'$ be $t$-categories such that 
\begin{itemize}
	\item $\mathscr{D}$ is a mixed triangulated category with a $t$-exact autoequivalence (\textit{a degree d Tate twist}) $\langle d\rangle:\mathscr{D}\rightarrow\mathscr{D}$ satisfying $\textup{wt}(S\langle d\rangle) = \textup{wt}(S) + d$;
	\item there is a $t$-exact functor $F:\mathscr{D}\rightarrow\mathscr{D}'$ such that the essential image generates $\mathscr{D}'$ as a triangulated category;
	\item and there is an isomorphism $\varepsilon: F\circ\langle d\rangle\stackrel{\sim}{\rightarrow}F$.
\end{itemize}
Then $\mathscr{D}$ is called a \textit{mixed version} of $\mathscr{D}'$ if $\varepsilon$ induces an isomorphism for all objects $M, N\in\mathscr{D}$ 
\begin{equation}
\displaystyle\bigoplus_{n\in\mathbb{Z}}\Hom_{\mathscr{D}}(M,N\langle nd\rangle) \xrightarrow{\sim}\Hom_{\mathscr{D'}}(FM, FN). 
\label{eq:mixed version}
\end{equation}
\end{definition}

Let $\DXc$ be the $\ell$-adic derived category of complexes of sheaves on $X:= X_0\times_{\textup{Spec}\mathbb{F}_q}\textup{Spec}(\bar{\mathbb{F}}_q)$. Define $\mathcal{F}: \KX\rightarrow\DXc$ to be the functor given by the composition \[\KX\stackrel{\tilde{\beta}}{\rightarrow}\DX\stackrel{\xi}{\rightarrow}\DXc.\] Recall that $\KX\stackrel{\tilde{\beta}}{\rightarrow}\DX$ is the realization functor defined in Section \ref{section:realization}. The functor $\DX\stackrel{\xi}{\rightarrow}\DXc$ is given by extending scalars to $\bar{\mathbb{F}}_q$. Let $\DXbar$ be the triangulated category generated in $\DXc$ by the objects $\mathcal{F}(S)$ for $S\in\mathscr{S}$, and let $\Pervs(X)$ be the Serre subcategory of $\Perv(X)$ generated by the perverse sheaves $\mathcal{F}(S)$ for $S\in\mathscr{S}$. Note that $\DXbar$ contains the image of $\mathcal{F}$ in $\DXc$. Thus, we may think of $\mathcal{F}$ as a functor with target $\DXbar$. Note that shifts in $\KX$ and $\DX$ combine under $\mathcal{F}$. Thus, for $M\in\KX$, we have that $\mathcal{F}(M[2i](i)[-2i])\cong\mathcal{F}(M)$. Also, $\mathcal{F}$ commutes with shifts so $\mathcal{F}(M[i])\cong\mathcal{F}(M)[i]$. We now show that $\KX$ is a mixed version of $\DXbar$ assuming $\purex$ is Frobenius invariant on morphisms. 

\begin{theorem}\label{theorem:genmixedversion} Assume that $\purex$ is Frobenius invariant. Then $\KX$ is a mixed version of $\DXbar$, where $\DXbar$ is the triangulated category generated by the image of $\mathscr{S}$ in $\DXc$. 
\end{theorem}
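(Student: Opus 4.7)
My plan is to verify the four defining conditions for $\mathcal{F}:\KX\to\DXbar$ to exhibit $\KX$ as a mixed version of $\DXbar$, with essentially all the substance concentrated in the hom-space isomorphism \eqref{eq:mixed version}. Since $\purex$ is an Orlov category with degree function $\deg(S[2m](m))=-2m$, \cite[Prop.~5.4]{AR} gives $\KX$ a bounded $t$-structure with heart $\PMX$ and a $t$-exact degree-2 Tate twist $\langle 2\rangle=[-2](-1)[2]$; the target $\DXbar$ inherits the perverse $t$-structure with heart $\Pervs(X)$ from $\DXc$. The natural isomorphism $\varepsilon:\mathcal{F}\circ\langle 2\rangle\xrightarrow{\sim}\mathcal{F}$ arises from two cancellations: the inner $\DX$-shift $[-2]$ and the outer $\KX$-shift $[2]$ become equal and opposite cohomological shifts in $\DX$ under $\tilde{\beta}$, while the inner Tate twist $(-1)$ disappears once $\xi$ discards the Frobenius action. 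A direct computation using Prop.~\ref{realization}(2) gives $\mathcal{F}(S[2i](i)[-2i])\cong S$ in $\DXc$, so $\mathcal{F}$ sends simples of $\PMX$ to simples of $\Pervs(X)$; this yields both $t$-exactness (the $t$-structure on $\KX$ is bounded with finite-length heart, so it suffices to check on simples) and essential surjectivity onto a generating set of $\DXbar$.

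The heart of the argument is \eqref{eq:mixed version}, which I would first establish for pairs of simple objects $M=S[2i](i)[-2i]$ and $N=S'[2j](j)[-2j]$ of $\PMX$. Unwinding the definition of $\langle 2n\rangle$, the one-term complex $N\langle 2n\rangle$ sits in $\KX$-degree $2j-2n$, so $\Hom_{\KX}(M,N\langle 2n\rangle)$ vanishes unless the $\KX$-degrees match, forcing $n=j-i$. The unique surviving summand equals $\Hom_{\purex}(S[2i](i),S'[2i](i))=\Hom_{\DX}(S,S')$, which by Lemma \ref{all Frob invariant} equals $\uHom^0(S,S')$; the right-hand side is $\Hom_{\DXc}(S,S')=\uHom^0(S,S')$, and these agree. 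Allowing additional cohomological shifts $M[k]$ and $N[\ell]$, the same analysis yields $\uHom^{\ell-k+2j-2i}(S,S')$ on both sides when $\ell-k$ is even, while Lemma \ref{puremorphisms} (odd vanishing of $\uHom$) forces both sides to vanish when $\ell-k$ is odd.

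To extend to arbitrary objects I would d\'evissage via stupid truncation. For any bounded complex $M\in\KX$, a distinguished triangle $\sigma_{\geq k+1}M\to M\to\sigma_{\leq k}M$ has outer terms of strictly shorter amplitude; since $\mathcal{F}$ is triangulated (by the appendix), this maps to a distinguished triangle in $\DXbar$. Applying $\bigoplus_n\Hom_{\KX}(-,N\langle 2n\rangle)$ on the left and $\Hom_{\DXbar}(-,\mathcal{F}N)$ on the right yields long exact sequences related by $\mathcal{F}$-induced maps, and the five-lemma with induction on amplitude in $M$ reduces the question to the case where $M$ is a single indecomposable of $\purex$ placed in some $\KX$-degree. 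A symmetric induction in $N$ then reduces to the one-term case already handled.

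The main obstacle I anticipate is ensuring that $\bigoplus_n\Hom_{\KX}(-,-\langle 2n\rangle)$ is a well-defined cohomological functor at each d\'evissage step: one must verify that the direct sum is finite on every pair of objects (so as to commute with long exact sequences) and that the isomorphism respects the connecting morphisms. The weight-collapse observed in the simple case --- only one value of $n$ contributes for any single pair of constituents --- bounds the contributing indices $n$ for any fixed pair of bounded complexes, giving the needed finiteness. Once this is in place, the whole argument rests on Lemma \ref{all Frob invariant}, the Frobenius invariance of $\purex$, and the $t$-exactness of $\langle 2\rangle$ supplied by the Orlov formalism.
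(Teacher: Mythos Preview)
Your proposal is correct and follows essentially the same approach as the paper: a base case for one-term complexes using Lemmas~\ref{puremorphisms} and~\ref{all Frob invariant} (with the parity distinction handling the odd-degree vanishing), followed by a five-lemma d\'evissage along stupid truncation triangles in each variable. You are more explicit than the paper about verifying the structural conditions of a mixed version ($t$-exactness of $\mathcal{F}$, existence of $\varepsilon$, essential surjectivity), which the paper leaves implicit, but the substantive argument is the same.
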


\begin{proof} Let $M$ and $N\in\KX$. We proceed by double induction on the lengths of the chain complexes $M$ and $N$. First, assume that $M$ and $N$ are concentrated in one degree. Without loss of generality, assume $M$ is concentrated in degree 0, i.e. that $M\in\Pure_\mathscr{S}(X_0)$. Let $j\in\mathbb{Z}$ be such that $N[-j]\in\purex$. Then $\Hom(M, N\langle 2n\rangle)) = \Hom(M, N[-2n](-n)[2n])\neq 0$ implies that $2n = -j$ because otherwise, $M$ and $N\langle 2n\rangle$ would be chain complexes concentrated in different degrees. Now, if $j$ is odd, then $\displaystyle\oplus_{n\in\mathbb{Z}}\Hom(M,N\langle 2n\rangle)=0.$ In this case, we must show that $\Hom(\mathcal{F}M, \mathcal{F}N) = 0$. Recall that $\mathcal{F}$ commutes with shift and that $N[-j]\in\purex$. Thus, we see that \[\Hom(\mathcal{F}M,\mathcal{F}N) = \Hom(\mathcal{F}M, \mathcal{F}(N[-j])[j]) = \Hom^j(\mathcal{F}M, \mathcal{F}(N[-j])).\] This vanishes by Lemma \ref{puremorphisms}. 

Now assume that $j$ is even.  

\begin{align*}
	\displaystyle\bigoplus_{n\in\mathbb{Z}}\Hom(M,N\langle 2n\rangle)&=\Hom(M, N[j](\textstyle\frac{j}{2})[-j])\\
																&\cong\Hom_{\DX}(M, (N[-j])(\textstyle\frac{j}{2})[j])\\
																&\cong\Hom_{\DXbar}(\mathcal{F}M, \mathcal{F}N), \hspace{1cm} \textup{by Lemma \ref{all Frob invariant}}
\end{align*}
Suppose that the theorem holds for $M$ a chain complex of length less than $n+1$ and $N$ concentrated in a single degree. Now, assume that $M^\bullet\in\KX$ is a chain complex of length $n+1$ and that $N$ is a chain complex concentrated in one degree. Let $i\in\mathbb{Z}$ be such that $M^\bullet$ vanishes in degrees less than $i$ and more than $i+n$. Note that the differential $\delta^i$ induces a chain map $M''[-1]\rightarrow M'$ where $M''$ and $M'$ are the obvious truncations of $M$. This gives a distinguished triangle $M'\rightarrow M^\bullet\rightarrow M''\rightarrow$. This triangle gives us the following commutative diagram with exact rows.

\begin{flushleft}
 %\scalefont{0.8}
 \begin{tikzpicture}[description/.style={fill=white,inner sep=1.5pt}] %[scale=.2]
 %\tikzstyle{every node}=[font=\small]

\matrix (m) [matrix of math nodes, row sep=2.5em,
column sep=1em, text height=1ex, text depth=0.25ex, nodes in empty cells]
{ \displaystyle\bigoplus_{n\in\mathbb{Z}}\Hom^{-1}(M', N\langle2n\rangle)  & \displaystyle\bigoplus_{n\in\mathbb{Z}}\Hom(M'', N\langle2n\rangle) & \displaystyle\bigoplus_{n\in\mathbb{Z}}\Hom(M^\bullet, N\langle2n\rangle) & \\
  \Hom^{-1}(\mathcal{F}(M'), \mathcal{F}(N)) & \Hom(\mathcal{F}(M''), \mathcal{F}(N)) & \Hom(\mathcal{F}(M^\bullet), \mathcal{F}(N)) & \\ };
\path[->,font=\scriptsize]
(m-1-1) edge (m-1-2)
edge node[above, sloped]{$\sim$} node[left] {$\alpha_1 $} (m-2-1)
(m-1-2) edge (m-1-3)
edge node[above, sloped]{$\sim$} node[left]{$\alpha_2$} (m-2-2)
(m-1-3) edge node[auto] {$u$} (m-1-4)
edge node[right] {$f$} (m-2-3)
(m-2-1) edge (m-2-2)
(m-2-2) edge (m-2-3)
(m-2-3) edge node[auto] {$u'$} (m-2-4);
\end{tikzpicture}

\end{flushleft}

\begin{flushright}
 %\scalefont{0.8}
 \begin{tikzpicture}[description/.style={fill=white,inner sep=1.5pt}] %[scale=.2]
 %\tikzstyle{every node}=[font=\small]

\matrix (m) [matrix of math nodes, row sep=2.5em,
column sep=1em, text height=1ex, text depth=0.25ex, nodes in empty cells]
{ & \displaystyle\bigoplus_{n\in\mathbb{Z}}\Hom(M', N(2n)) & \displaystyle\bigoplus_{n\in\mathbb{Z}}\Hom^1(M'', N(2n)) \\
  & \Hom(\mathcal{F}(M'), \mathcal{F}(N)) & \Hom^1(\mathcal{F}(M''), \mathcal{F}(N)) \\ };
\path[->,font=\scriptsize]
(m-1-1) edge node[auto] {$u$} (m-1-2)
%edge node[description] {$ h $} (m-2-1)
(m-1-2) edge (m-1-3)
				edge node[below, sloped]{$\sim$} node[auto]{$\alpha_3$} (m-2-2)
(m-2-1) edge node[auto]{$u'$} (m-2-2)
(m-2-2) edge (m-2-3)
(m-1-3)	edge node[below, sloped]{$\sim$} node[auto]{$\alpha_4$} (m-2-3);
\end{tikzpicture}

\end{flushright}

Note that $\alpha_1, \alpha_2, \alpha_3,$ and $\alpha_4$ are isomorphisms by the induction hypothesis. Thus, the five lemma implies that $f$ is also an isomorphism.

A similar argument proves the claim for general $M$ and $N$ in $\KX$. \end{proof}

\begin{corollary} The heart $\PMX$ of the $t$-structure on $\KX$ is a mixed version of the category of perverse sheaves $\Pervs(X)$.
\end{corollary}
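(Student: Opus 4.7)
The plan is to deduce the corollary directly from Theorem \ref{theorem:genmixedversion} by restricting along the inclusions of hearts $\PMX \hookrightarrow \KX$ and $\Pervs(X) \hookrightarrow \DXbar$.

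First, I would verify that $\mathcal{F}$ restricts to a functor $\PMX \to \Pervs(X)$. The simple objects of $\PMX$ are $S[2i](i)[-2i]$ for $S\in\mathscr{S}$ and $i\in\mathbb{Z}$, each of which maps under $\mathcal{F}$ to the perverse sheaf $\mathcal{F}(S)\in\Pervs(X)$, using the identity $\mathcal{F}(M[2i](i)[-2i])\cong \mathcal{F}(M)$ noted before Theorem \ref{theorem:genmixedversion}. For an arbitrary object of $\PMX$, which is an iterated extension of such simples, I would proceed by induction on length: the triangulated exactness of $\mathcal{F}$ together with closure of $\Pervs(X)$ under extensions inside $\Perv(X)$ ensures that the cone of a morphism between two perverse sheaves in $\Pervs(X)$ coming from an extension in $\PMX$ remains in $\Pervs(X)$.

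Next I would check the remaining data required by the definition of a mixed version. The Tate twist $\langle 2\rangle = [-2](-1)[2]$ on $\PMX$ is $t$-exact and shifts weight by $2$ by construction of the $t$-structure on the homotopy category of the Orlov category $\purex$. The isomorphism $\varepsilon\colon \mathcal{F}\circ\langle 2\rangle\xrightarrow{\sim}\mathcal{F}$ already exists on all of $\KX$ and hence restricts to $\PMX$. The essential image of $\mathcal{F}|_{\PMX}$ generates the Serre subcategory $\Pervs(X)$ because its simple generators $\mathcal{F}(S)$ for $S\in\mathscr{S}$ lie in this image.

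Finally, the key Hom decomposition
\[
\bigoplus_{n\in\mathbb{Z}} \Hom_{\PMX}(M, N\langle 2n\rangle) \xrightarrow{\sim} \Hom_{\Pervs(X)}(\mathcal{F}M, \mathcal{F}N)
\]
for $M, N\in \PMX$ is an immediate consequence of Theorem \ref{theorem:genmixedversion} applied at the level of degree-zero morphisms, since morphisms in the abelian hearts $\PMX\subset\KX$ and $\Pervs(X)\subset\DXbar$ coincide with ambient triangulated morphisms. The only mildly nontrivial step, and the main obstacle, is the $t$-exactness check in the second paragraph; everything else is formal once $\mathcal{F}|_{\PMX}$ is known to land in $\Pervs(X)$.
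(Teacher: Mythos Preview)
Your proposal is correct and follows the route the paper intends: the corollary is stated in the paper without proof, as an immediate consequence of Theorem~\ref{theorem:genmixedversion}, and your argument simply spells out the restriction to hearts (the d\'evissage showing $\mathcal{F}|_{\PMX}$ lands in $\Pervs(X)$, plus the observation that the Hom isomorphism restricts). The only comment worth making is that the paper never writes down a separate definition of ``mixed version'' for abelian categories, so you are implicitly (and correctly) interpreting it as the obvious abelian analogue in the sense of \cite[\S4.3]{BGS}; it would be worth saying this explicitly.
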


\subsection{A second $t$-structure}\label{section:tstr} We now define a new $t$-structure on $\KX$, which we'll refer to as the non-standard $t$-structure. To do so, we regard $\purex$ as an Orlov category with a different degree function. Recall that an indecomposable in $\purex$ is given by $S[2i](i)$ for a simple perverse sheaf $S\in\mathscr{S}$ and $i\in\mathbb{Z}$. We define the new degree function on indecomposables by $\deg(S[2i](i)) = -i$. The same argument used eariler verifies that this degree function also makes $\purex$ into an Orlov category. Thus, \cite[Proposition 5.4]{AR} implies that we have a $t$-structure on $\KX$. We will denote the heart of this second $t$-structure by $\pervk(X_0)$. It is also mixed with a degree 1 Tate twist $\tw{1}:=[-2](-1)[1]$ and contains irreducible objects $S\tw{i}$ for all $S\in\mathscr{S}$ and $i\in\mathbb{Z}$.  

\section{Background on the Springer correspondence}\label{section:Springer} Let $G$ be a connected, reductive algebraic group defined over $\bar{\mathbb{F}}_q$. Let $\mathcal{N}\subset\mathfrak{g}$ be the variety of nilpotent elements in the Lie algebra of $G$. Our goal is to understand the representation theory of the Weyl group $W$ for $G$ by studying the Springer sheaf $\mathbf{A}$.

\subsection{The Springer sheaf} Let $\mu:\accentset{\sim}{\mathcal{N}}\rightarrow\mathcal{N}$ be the Springer resolution. Then the Springer sheaf $\mathbf{A}\in\DNc$ is defined by 
\[ \mathbf{A}:=\mu_{*}(\constant_{\widetilde{\mathcal{N}}})[d](\textstyle\frac{d}{2})\]
where $\constant_{\widetilde{\mathcal{N}}}$ is the constant sheaf on $\accentset{\sim}{\mathcal{N}}$ and $d=\dim \accentset{\sim}{\mathcal{N}}$.
Let $\DNGc$ be the $G$-equivariant derived category. For background on the equivariant derived category, see \cite{BL}. The Springer sheaf $\mathbf{A}$ is also a $G$-equivariant perverse sheaf. For most of what follows, we will consider both non-equivariant and $G$-equivariant versions of statements. The proofs in both cases are essentially the same. We will not distinguish by notation objects that belong to both $\DNc$ and $\DNGc$. The following fact is well-known and a consequence of \cite[Proposition 4.2.5]{BBD}: for $G$-equivariant perverse sheaves $\mathscr{F}$ and $\mathscr{G}$ on a $G$ variety $X$ with $G$ connected, we have that $\Hom_{\DXc}(\mathscr{F}, \mathscr{G})\cong\Hom_{\DXGc}(\mathscr{F}, \mathscr{G})$. A proof can be found in \cite[Section 1.16]{L2}.

The Springer sheaf $\mathbf{A}$ has a natural action of the Weyl group $\sigma: W\rightarrow\Aut(\mathbf{A})$ defined by Lusztig in \cite{L3}. According to \cite[Theorem 3]{BM1}, we have an isomorphism
\[ \Ql[W] \cong \End(\mathbf{A}). \] Let $\mathscr{B}$ be the variety of Borel subgroups of $G$. Then we also have an action $\kappa: \Ql[W]\rightarrow\End(\coflag)$ induced by the $W$-action on $G/T$ where $T$ is a maximal torus. We have $G$-equivariant analogues of the above. The structure of the cohomology ring of the flag variety is well understood. There is a degree doubling isomorphism of graded rings between (1) the algebra of $W$-coinvariants $\Coinv$ and $\coflag$, and (2) the symmetric algebra based on the Cartan $\Sh$ and $\coGflag$. We will often make use of the categories $\DBc$ and $\DBGc$ of $\ell$-adic sheaves on $\mathscr{B}$ constructible with respect to the trivial stratification.

We will usually focus our attentions on the category $\DNbar$ (respectively, $\DNGbar$) defined as the triangulated category generated by the simple summands of $\mathbf{A}$ in $\DNc$ (respectively, in $\DNGc$). 

\subsection{Borel--Moore homology of the Steinberg variety} Another approach to studying the relationship between $W$-representations and the Springer sheaf $\mathbf{A}$ involves the Borel--Moore homology of the Steinberg variety $Z=\accentset{\sim}{\mathcal{N}}\times_{\mathcal{N}}\accentset{\sim}{\mathcal{N}}$. For our purposes, we use the definition of the Borel--Moore homology in terms of the hypercohomology of the dualizing complex $\omega_Z$ as developed in \cite[8.3.7]{CG}.
\[ \cH^{BM}_i(Z) : = \cH^{-i}(Z, \omega_Z).\]
Many details of the relationship between $\cH^{BM}_\bullet(Z)$ and the Springer sheaf $\mathbf{A}$ are developed in \cite{CG}. In particular, we have an isomorphism of graded rings
\begin{align}\label{BMisom}
	\cH^{BM}_{2d-\bullet}(Z)\cong\Hom^\bullet(\mathbf{A}, \mathbf{A}).
\end{align}

Let $X_0$ be a variety defined over $\mathbb{F}_q$. Then we may define Borel--Moore homology with an $n$-twist (with $\ell$-adic coefficients) by \[\cH^{BM}_i(X_0, n) = \Hom_{\DX}^{-i}(\constant_{X_0}, \omega_{X_0}(n)).\] Extending scalars to $\bar{\mathbb{F}}_q$, we get the usual Borel--Moore homology (with an $n$-twist), and it has inherited an action of Frobenius. To denote this, we write $\uH^{BM}_i(X_0, n) = \uHom^{-i}(\constant_{X_0}, \omega_{X_0}(n))$. Applying the short exact sequence \eqref{eq:FrobSES}, we get the following short exact sequence relating the Borel--Moore homology groups of $X_0$ over $\mathbb{F}_q$ and $X$ over $\bar{\mathbb{F}}_q$:
\[0\rightarrow \uH^{BM}_{i+1}(X_0, n)_{\textup{Frob}}\rightarrow \cH^{BM}_{i}(X_0, n)\rightarrow\uH^{BM}_{i}(X_0, n)^{\textup{Frob}}\rightarrow 0\]
In particular, we have the following theorem:

\begin{theorem}\label{BMvanish} Let $X_0$ be a variety defined over $\mathbb{F}_q$. Then the Borel--Moore homology of $X_0$ vanishes for $i<-1$ and $i>2\dim(X_0)$. In particular, $\cH^{BM}_{2\dim(X_0)}(X_0, n)\cong\uH^{BM}_{2\dim(X_0)}(X_0, n)^{\textup{Frob}}$.
\end{theorem}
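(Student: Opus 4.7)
The plan is to combine the short exact sequence \eqref{eq:FrobSES} with the classical vanishing range of Borel--Moore homology over $\bar{\mathbb{F}}_q$. Applying \eqref{eq:FrobSES} to $\mathscr{F}_0=\constant_{X_0}$ and $\mathscr{G}_0=\omega_{X_0}(n)$ and re-indexing (so that the degree $j$ in $\Hom^j$ becomes the homological degree $-i$), we obtain the sequence displayed just before the theorem statement:
\[
0\rightarrow \uH^{BM}_{i+1}(X_0, n)_{\textup{Frob}}\rightarrow \cH^{BM}_{i}(X_0, n)\rightarrow\uH^{BM}_{i}(X_0, n)^{\textup{Frob}}\rightarrow 0.
\]
So to prove that $\cH^{BM}_i(X_0,n)=0$ in a given range, it is enough to show that both $\uH^{BM}_i(X,n)$ and $\uH^{BM}_{i+1}(X,n)$ vanish in that range.

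Next I would invoke the standard fact that for a variety $X$ over an algebraically closed field, $\uH^{BM}_i(X,n)=0$ whenever $i<0$ or $i>2\dim X$ (the twist by $n$ does not affect vanishing, only the Frobenius action). If $i<-1$, then $i\le -2$, so both $i$ and $i+1$ are strictly negative; if $i>2\dim(X_0)$, then $i+1>2\dim(X_0)$ as well, so both $\uH^{BM}_i$ and $\uH^{BM}_{i+1}$ vanish. In either case the short exact sequence forces $\cH^{BM}_i(X_0,n)=0$, which is the first assertion.

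For the ``in particular'' statement, take $i=2\dim(X_0)$. Then $i+1=2\dim(X_0)+1$ lies above the top Borel--Moore degree, so $\uH^{BM}_{i+1}(X,n)=0$ and hence its Frobenius coinvariants vanish. The short exact sequence degenerates to an isomorphism
\[
\cH^{BM}_{2\dim(X_0)}(X_0,n)\xrightarrow{\ \sim\ }\uH^{BM}_{2\dim(X_0)}(X_0,n)^{\textup{Frob}},
\]
which is exactly the desired statement.

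There is no substantive obstacle here: everything reduces to the short exact sequence \eqref{eq:FrobSES} together with a classical vanishing fact, which is why the argument occupies only a few lines. The only point worth being careful about is the index shift between cohomological degree and Borel--Moore degree, and the fact that the lower bound $i<-1$ (rather than $i<0$) is sharp because the coinvariants term $\uH^{BM}_0(X_0,n)_{\textup{Frob}}$ can contribute nontrivially to $\cH^{BM}_{-1}(X_0,n)$.
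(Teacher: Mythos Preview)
Your proposal is correct and follows exactly the paper's approach: invoke the classical vanishing range $0\le i\le 2\dim(X_0)$ for $\uH^{BM}_i(X,n)$ over $\bar{\mathbb{F}}_q$ and feed it into the short exact sequence displayed just before the theorem. The paper's proof is a one-line reference to these two ingredients, and you have simply spelled out the index-chasing in detail.
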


\begin{proof} This follows from the fact that $\uH^{BM}_i(X_0, n)$ is only non-zero for $0\leq i\leq 2\dim(X_0)$ and the above short exact sequence. \end{proof}

\subsection{Mixed Springer sheaves} Now, in order to apply the mixedness machinery of \cite{BBD}, we need analogues of $\mathcal{N}, \mathfrak{g}, G, \mathscr{B}$ and other related varieties defined over a finite field $\mathbb{F}_q$ of characteristic $p$. We make the following assumptions before proceeding.
\begin{enumerate}
	\item These varieties are equipped with an $\mathbb{F}_q$-rational structure.
	\item The field $\mathbb{F}_q$ is \textit{big enough} and has good characteristic.
	\item The reductive group $G$ is $\mathbb{F}_q$-split.
\end{enumerate}
The reason for our first assumption is clear. Let $\mathcal{N}_0$, $G_0$, et cetera denote $\mathbb{F}_q$-schemes whose extension of scalars are $\mathcal{N}$, $G$, et cetera.  Now, $G_0$ acts on $\mathcal{N}_0$ by the adjoint action; however, it may be the case that not all $G$-orbits of $\mathcal{N}$ appear. Since there are only finitely many nilpotent orbits, we can ensure that all of them are defined by taking a finite field extension of $\mathbb{F}_q$ if necessary. This is the reasoning behind our second assumption. We assume that $\mathbb{F}_q$ is \textit{big enough} so that the Frobenius fixed point set of each nilpotent $G$-orbit is non-empty. Our final requirement is that we must assume that $G$ is $\mathbb{F}_q$-split. In other words, $G_0$ has a split maximal torus of the same dimension as a maximal torus in $G$. We now show that in this setting, we do not lose information by passing to the $\mathbb{F}_q$-setting.

\begin{lemma}\label{BMFrob} The top degree Borel--Moore homology $\uH^{BM}_{2d}(Z, -d)$ has a basis that is Frobenius invariant. 
\end{lemma}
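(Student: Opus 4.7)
The plan is to exhibit an explicit basis of $\uH^{BM}_{2d}(Z,-d)$ consisting of fundamental classes of the irreducible components of $Z$, and then verify that each such component is defined over $\mathbb{F}_q$, so that its fundamental class is fixed by Frobenius.

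First I would invoke the standard fact that for any variety $Y$ of pure dimension $d$ over $\bar{\mathbb{F}}_q$, the top Borel--Moore homology $\uH^{BM}_{2d}(Y,-d)$ is pure of weight $0$, and has a basis given by the fundamental classes $[Y_i]$ of the top-dimensional irreducible components $Y_i$ of $Y$. This reduces the problem to two independent tasks: identifying those components for $Z$, and showing each is $\mathbb{F}_q$-rational.

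Next I would use the well-known Bruhat-type parametrization of irreducible components of the Steinberg variety (as in \cite{CG}): the components are indexed by the Weyl group $W$ via the map $Z\to\mathscr{B}\times\mathscr{B}$, which is a vector bundle over each $G$-orbit; pulling back the Bruhat stratum $\mathscr{B}\times\mathscr{B}\supset O_w$ and taking closures gives exactly one irreducible component $Z_w$ for each $w\in W$, all of dimension $d$.

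The last step, and the only place where the assumptions from the previous subsection are really needed, is to verify that each $Z_w$ is defined over $\mathbb{F}_q$. Since $G$ is $\mathbb{F}_q$-split, we may choose a split maximal torus $T_0$ and Borel $B_0$ defined over $\mathbb{F}_q$, so the Weyl group $W=N_G(T)/T$ has representatives in $G(\mathbb{F}_q)$ and every Bruhat orbit $O_w\subset\mathscr{B}\times\mathscr{B}$ is $\mathbb{F}_q$-rational. Pulling back along the $\mathbb{F}_q$-morphism $Z_0\to\mathscr{B}_0\times\mathscr{B}_0$ and taking closures gives $\mathbb{F}_q$-forms $(Z_w)_0$ of each component $Z_w$; the hypothesis that $\mathbb{F}_q$ is big enough ensures this works without passing to any further extension. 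Consequently the fundamental class $[Z_w]\in\uH^{BM}_{2d}(Z,-d)$ is Frobenius-fixed, and the collection $\{[Z_w]\}_{w\in W}$ is the desired basis. The main (and essentially only) substantive point is the rationality of the Bruhat stratification, which is where all three numbered assumptions in the subsection feed into the argument.
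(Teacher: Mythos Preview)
Your proposal is correct and follows essentially the same route as the paper: identify the irreducible components of $Z$ with the Bruhat cells (the paper phrases them as closures of the conormal bundles $T^*_{Y_w}(\mathscr{B}\times\mathscr{B})$), use splitness of $G$ to see each is $\mathbb{F}_q$-rational, and conclude that the associated fundamental classes form a Frobenius-invariant basis.

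The only real difference is one of emphasis. You invoke as a ``standard fact'' that an $\mathbb{F}_q$-rational component has a Frobenius-fixed fundamental class; the paper actually spells this out by constructing the class in $\cH^{BM}_{2d}(X_0,-d)$ directly: choose a smooth dense open $U_0\subset X_0$, use the long exact sequence together with Theorem~\ref{BMvanish} to identify $\cH^{BM}_{2d}(X_0,-d)\cong\cH^{BM}_{2d}(U_0,-d)\cong\Hom(\constant_{U_0},\constant_{U_0})$, and take the identity. This explicit lift to the $\mathbb{F}_q$-level is what makes Frobenius invariance transparent rather than merely citing purity of weight $0$ (which, as you implicitly recognize, gives only that eigenvalues have absolute value $1$, not that they equal $1$). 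Your argument is fine provided you are comfortable citing the existence of the $\mathbb{F}_q$-rational fundamental class as known; the paper simply reproves it.
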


\begin{proof} Recall that the irreducible components of the Steinberg variety are given by closures of conormal bundles $T^*_{Y_w}(\mathscr{B}\times\mathscr{B}),$ where $Y_w$ is the $G$-orbit of $\mathscr{B}\times\mathscr{B}$ corresponding to $w\in W$. Each of these is defined over $\mathbb{F}_q$ since for split $G$, the Bruhat decomposition is defined over $\mathbb{F}_q$.

The top-degree Borel--Moore homology of an algebraic variety has a basis given by fundamental classes associated to the top-dimensional irreducible components. In order to show that $\uH^{BM}_{2d}(Z, -d)$ is Frobenius invariant, it suffices to show that our basis can be defined over $\mathbb{F}_q$. Let $X$ be an irreducible component of $Z$ such that $X_0$ is the corresponding irreducible component of $Z_0$ over $\mathbb{F}_q$. Then the fundamental class associated to $X$ is defined over $\mathbb{F}_q$. To see this, we need a canonical element $f\in\Hom(\constant_{X_0}, \omega_{X_0}[-2d](-d))=\cH^{-2d}(X_0, \omega_{X_0}(-d)) = \cH^{BM}_{2d}(X_0, -d).$ Let $U_0\subset X_0$ be a smooth dense open set and set $F_0 = X_0\backslash U_0$. Then we have inclusions \[U_0\stackrel{j}{\hookrightarrow}X_0\stackrel{i}{\hookleftarrow}F_0.\] This gives a distinguished triangle in $\DX$ \[i_*i^!\omega_{X_0}[-2d](-d)\rightarrow\omega_{X_0}[-2d](-d)\rightarrow j_*j^*\omega_{X_0}[-2d](-d)\rightarrow i_*i^!\omega_{X_0}[-2d+1](-d).\] Now we apply $\Hom(\constant_{X_0}, -)$ to get the exact sequence in Borel--Moore homology
\[\cH^{BM}_{2d}(F_0, -d)\rightarrow \cH^{BM}_{2d}(X_0, -d)\rightarrow \cH^{BM}_{2d}(U_0, -d)\rightarrow \cH^{BM}_{2d+1}(F_0, -d)\] Note that $\dim F_0<\dim X_0$. Thus, Lemma \ref{BMvanish} implies that \[\cH^{BM}_{2d}(F_0, -d)=\cH^{BM}_{2d+1}(F_0, -d)= 0\] since both are Borel--Moore homology groups in degree greater than $2\dim F_0$. Therefore we have an isomorphism $\cH^{BM}_{2d}(X_0, -d)\cong\cH^{BM}_{2d}(U_0, -d),$ i.e. an isomorphism \[\Hom(\constant_{X_0}, \omega_{X_0}[-2d](-d))\cong\Hom(\constant_{U_0}, \constant_{U_0}).\] Let $f$ be the morphism corresponding to $id:\constant_{U_0}\rightarrow\constant_{U_0}$ under this isomorphism.\end{proof}

We use the Borel--Moore homology as a stepping stone to get the following result.

\begin{lemma}\label{lemma:trivialFrob} We have an isomorphism $\Hom_{\DN}(\mathbf{A},\mathbf{A})\cong \uHom(\mathbf{A}, \mathbf{A})\cong\Ql[W]$. Thus, we also have $\Hom_{\DNG}(\mathbf{A},\mathbf{A})\cong \Ql[W]$.
\end{lemma}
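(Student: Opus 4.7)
The plan is to deduce the $\mathbb{F}_q$-statement from the classical computation $\uHom(\mathbf{A},\mathbf{A})\cong \Ql[W]$ over $\bar{\mathbb{F}}_q$, together with the Frobenius-invariance data packaged in Lemma~\ref{BMFrob}.

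First I would use \eqref{BMisom} and the theorem of Borho--MacPherson \cite{BM1} to record that $\uHom(\mathbf{A},\mathbf{A})\cong \uH^{BM}_{2d}(Z,-d)\cong \Ql[W]$. Next I would plug $\mathscr{F}_0=\mathscr{G}_0=\mathbf{A}$ and $i=0$ into the short exact sequence~\eqref{eq:FrobSES}:
\[
0 \to \uHom^{-1}(\mathbf{A},\mathbf{A})_{\textup{Frob}} \to \Hom_{\DN}(\mathbf{A},\mathbf{A}) \to \uHom(\mathbf{A},\mathbf{A})^{\textup{Frob}} \to 0.
\]
The leftmost term vanishes: $\mathbf{A}$ is perverse, so $\uHom^{-1}(\mathbf{A},\mathbf{A})=0$ (equivalently, via \eqref{BMisom} this group is $\uH^{BM}_{2d+1}(Z,-d)$, which is zero by Theorem~\ref{BMvanish} since $2d+1 > 2\dim Z = 2d$), and hence its Frobenius coinvariants are zero too.

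For the rightmost term, Lemma~\ref{BMFrob} furnishes a Frobenius-invariant basis of $\uH^{BM}_{2d}(Z,-d)$, and transporting this basis along \eqref{BMisom} gives a Frobenius-invariant basis of $\uHom(\mathbf{A},\mathbf{A})$. Hence Frobenius acts trivially, $\uHom(\mathbf{A},\mathbf{A})^{\textup{Frob}}=\uHom(\mathbf{A},\mathbf{A})\cong \Ql[W]$, and the short exact sequence collapses to give $\Hom_{\DN}(\mathbf{A},\mathbf{A})\cong \Ql[W]$. For the $G$-equivariant statement I would run the same short exact sequence~\eqref{eq:FrobSES} in the equivariant setting. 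The leftmost term again vanishes by the perversity of $\mathbf{A}$. For the rightmost term, the fact recalled from \cite[Section 1.16]{L2} gives $\uHom_G(\mathbf{A},\mathbf{A})\cong \uHom(\mathbf{A},\mathbf{A})$ as $\textup{Frob}$-modules, so the trivial Frobenius action on the non-equivariant side propagates and yields $\Hom_{\DNG}(\mathbf{A},\mathbf{A})\cong \Ql[W]$.

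The main obstacle is ensuring that the identification \eqref{BMisom} is Frobenius-equivariant, so that the invariant basis of top Borel--Moore homology produced by Lemma~\ref{BMFrob} actually transfers to an invariant basis of $\uHom(\mathbf{A},\mathbf{A})$. This should follow from the naturality of the constructions in \cite{CG} (the isomorphism is built from pushforward and base change along $\mathbb{F}_q$-morphisms), but it is the one place where anything beyond a purely formal manipulation of the Frobenius short exact sequence is needed.
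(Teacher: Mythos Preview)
Your proposal is correct and follows essentially the same route as the paper: both use the short exact sequence~\eqref{eq:FrobSES}, the vanishing of $\uHom^{-1}(\mathbf{A},\mathbf{A})$ by perversity, and Lemma~\ref{BMFrob} via the identification~\eqref{BMisom} to conclude $\uHom(\mathbf{A},\mathbf{A})^{\textup{Frob}}=\uHom(\mathbf{A},\mathbf{A})$. The paper phrases the endgame as a dimension count after noting the injection $\Hom(\mathbf{A},\mathbf{A})\hookrightarrow\uHom(\mathbf{A},\mathbf{A})$, but this is the same computation you carry out; your flagged concern about the Frobenius-equivariance of~\eqref{BMisom} is indeed the one nontrivial point, and the paper simply takes it for granted.
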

\begin{proof}

Recall the short exact sequence \[0\rightarrow \uHom^{-1}(\mathbf{A},\mathbf{A})_{\textup{Frob}}\rightarrow \Hom(\mathbf{A},\mathbf{A})\rightarrow \uHom(\mathbf{A}, \mathbf{A})^{\textup{Frob}}\rightarrow 0.\] Since $\mathbf{A}$ is a perverse sheaf, $\uHom^{-1}(\mathbf{A},\mathbf{A})=0$ implying the map $\Hom(\mathbf{A},\mathbf{A})\rightarrow \uHom(\mathbf{A}, \mathbf{A})^{\textup{Frob}}$ is an isomorphism. Thus, we have an injection $\Hom(\mathbf{A},\mathbf{A})\hookrightarrow \uHom(\mathbf{A}, \mathbf{A})$, so it suffices to show that $\Hom(\mathbf{A},\mathbf{A})$ and $\uHom(\mathbf{A}, \mathbf{A})$ have the same dimension. Recall the isomorphism \eqref{BMisom} from \cite{CG}. This restricts to an isomorphism of the degree 0 piece: $\uHom(\mathbf{A}, \mathbf{A})\cong \uH^{BM}_{2d}(Z_0, -d)$. The following string of isomorphisms gives us the conclusion:

\begin{align*}\uHom(\mathbf{A}, \mathbf{A})^{\textup{Frob}}&\cong \uH^{BM}_{2d}(Z_0, -d)^{\textup{Frob}}\\
						&\cong \uH^{BM}_{2d}(Z_0, -d), \hspace{1cm}\textup{by Lemma \ref{BMFrob}}\\
						&\cong \uHom(\mathbf{A}, \mathbf{A})
\end{align*} \end{proof}

Thus, we have a natural action of the Weyl group $W$ on $\mathbf{A}\in\DN$. Let $\sigma: W\rightarrow \Aut(\mathbf{A})$ denote this action. By \cite[Proposition 5.3.9]{BBD}, we may decompose $\mathbf{A}$ as follows:
	\[\mathbf{A} \simeq \displaystyle\bigoplus_{\chi\in Irr(W)} \IC_\chi \otimes V_\chi,
\]
where the $\IC_\chi$ are various distinct simple perverse sheaves and $V_\chi$ is a vector space with an action of Frobenius, which is not necessarily semisimple. Note that both the Frobenius action and the Weyl group action on $V_\chi$ arise via the identification: $V_\chi = \uHom(\IC_\chi, \mathbf{A})$. However this identification requires a choice of $\IC_\chi\in\DN$. Regardless of the choice of $\IC_\chi$, it is easy to see that Frobenius is a $W$-equivariant endomorphism of $V_\chi$ by Lemma \ref{lemma:trivialFrob}. Of course $V_\chi$ is an irreducible $W$-representation. Hence, Frobenius must act by a scalar (with absolute value 1). Since a scalar may travel across the tensor product $\IC_\chi\otimes V_\chi$, we declare that Frobenius acts trivially on $V_\chi$, and this gives a unique choice for the simple perverse sheaf $\IC_\chi\in \DN$. We fix this choice for all $\chi\in\textup{Irr}(W)$.  

Define $\spr$ as the full subcategory of $\DN$ (or $\DNG$) consisting of objects that are finite direct sums of the simple perverse sheaves $\IC_\chi$ as above. Let $\puren$, respectively $\puregn$, be the full subcategory of $\DN$, respectively $\DNG$, consisting of semisimple objects that are pure of weight $0$ and whose length 1 subquotients are in $\spr[2n](n)$ for some $n$. In other words, \[\puren = \displaystyle\bigoplus_{n\in\mathbb{Z}}\spr[2n](n).\] In the following section, we will show that the categories $\puren$ and $\puregn$ are Frobenius invariant by relating morphisms between simple perverse sheaves with the cohomology (or $G$-equivariant cohomology) of the flag variety $\mathscr{B}$. To do so, we must first introduce a pair of functors. 

\subsection{The functors $\Psi$ and $\Phi$}\label{adjointpair} We now introduce an adjoint pair of functors studied thoroughly in \cite{A}. Consider the maps
\[ \mathcal{N}\stackrel{\mu}{\leftarrow}\widetilde{\mathcal{N}}\stackrel{\pi}{\rightarrow}\mathscr{B}.\] We define the functors
 \[ \Psi: \DBc\rightarrow\DNc \hspace{.5cm}\textup{ and }\hspace{.5cm} \Phi:\DNc\rightarrow\DBc\]
by $\Psi = \mu_{*}\pi^{!}$ and $\Phi = (\pi_{*}\mu^!)[-d](-\frac{d}{2})$. Note that $\Psi\simeq\mu_{!}\pi^{*}[d](\frac{d}{2})$ since $\mu$ is proper and $\pi$ is smooth of relative dimension $\frac{d}{2}$. It is easy to see that $(\Psi,\Phi)$ forms an adjoint pair. We denote the isomorphism induced by adjunction by
\[ \theta:\Hom^i(\constant_\mathscr{B}, \Phi(\mathbf{A}))\stackrel{\sim}{\rightarrow}\Hom^i(\Psi(\constant_\mathscr{B}), \mathbf{A}) = \Hom^i(\mathbf{A}, \mathbf{A}).\] %According to 

The following is a refinement \cite[Theorem 4.6]{A}:
\begin{lemma}\label{Lemma:purenfrinv} The category $\puren$ is Frobenius invariant. That is, for all $\IC_\chi, \IC_\psi$ in $\mathscr{S}$ and $i\in\mathbb{Z}$ \[\uHom(\IC_\chi, \IC_\psi[i](\textstyle\frac{i}{2}))^{\textup{Frob}}\cong \uHom(\IC_\chi, \IC_\psi[i](\textstyle\frac{i}{2})).\] In particular, $\uHom(\IC_\chi, \IC_\psi[i](\frac{i}{2})) \cong \Hom_{W}(V_\chi^*, \cH^i(\mathscr{B})\otimes V_\psi^*)$ and $\uHom^i(\IC_\chi, \IC_\psi)$ vanishes for $i$ odd.  
\end{lemma}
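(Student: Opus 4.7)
The formula $\uHom(\IC_\chi,\IC_\psi[i](\tfrac{i}{2}))\cong \Hom_W(V_\chi^*,\cH^i(\mathscr{B})\otimes V_\psi^*)$ and the odd-degree vanishing are essentially Achar's Theorem~4.6 in \cite{A}, so the new content of the lemma is the Frobenius-invariance assertion. The plan is to redo Achar's argument on the $\mathbb{F}_q$-level, then read off the Frobenius statement from the known Frobenius-triviality (after Tate twist) of the cohomology of a split flag variety.

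The first step is a reduction. Using the decomposition $\mathbf{A}\cong\bigoplus_\chi V_\chi\otimes\IC_\chi$, in which Frobenius acts trivially on each $V_\chi$ by the preceding normalization, one gets
\[
\uHom^i(\mathbf{A},\mathbf{A})\;\cong\;\bigoplus_{\chi,\psi}\Hom(V_\chi,V_\psi)\otimes\uHom^i(\IC_\chi,\IC_\psi)
\]
as Frobenius modules. Hence Frobenius-invariance of all $\uHom(\IC_\chi,\IC_\psi[i](\tfrac{i}{2}))$ is equivalent to Frobenius-invariance of $\uHom^i(\mathbf{A},\mathbf{A})(\tfrac{i}{2})$. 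The adjunction isomorphism $\theta$ from Section~\ref{adjointpair} now rewrites
\[
\uHom^i(\mathbf{A},\mathbf{A})\;\cong\;\uHom^i(\constant_{\mathscr{B}},\Phi\mathbf{A})
\]
Frobenius equivariantly, since $\Psi$ and $\Phi$ are constructed from $\mathbb{F}_q$-rational maps.

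The core step is to identify $\Phi\mathbf{A}$ Frobenius-equivariantly with $\constant_{\mathscr{B}}\otimes\Ql[W]$ (with its natural $W\times W$-action), so that $\uHom^i(\mathbf{A},\mathbf{A})\cong\cH^i(\mathscr{B})\otimes\Ql[W]$. This is Achar's base-change computation along the Steinberg square $Z=\widetilde{\mathcal{N}}\times_{\mathcal{N}}\widetilde{\mathcal{N}}$, carried out over $\mathbb{F}_q$; splitness of $G$ ensures that the orbits on $\mathscr{B}\times\mathscr{B}$ and their conormal bundles descend to $\mathbb{F}_q$, while the ``big enough'' hypothesis on $\mathbb{F}_q$ lets Lusztig's construction of the $W$-action on $\mathbf{A}$ commute with Frobenius. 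Once this is done, Frobenius-invariance follows from the Bruhat decomposition: the $\mathbb{F}_q$-rational Schubert cells give $\uH^{2k}(\mathscr{B})$ a Frobenius-invariant Schubert basis, so $\uH^{2k}(\mathscr{B})(k)$ has trivial Frobenius and $\uH^{\mathrm{odd}}(\mathscr{B})=0$. Extracting $W\times W$-isotypic components via Schur's lemma then produces both the explicit formula and the invariance/vanishing claims. The main obstacle is the Frobenius-equivariant identification of $\Phi\mathbf{A}$; it is precisely there that the splitness and ``big enough'' hypotheses are used.
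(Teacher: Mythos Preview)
Your approach is correct and shares the paper's overall strategy: reduce via the adjunction $(\Psi,\Phi)$ to a computation on $\mathscr{B}$, then invoke the Frobenius-triviality of $\cH^i(\mathscr{B})(\tfrac{i}{2})$ for split $G$. The difference lies in how the reduction is organized, and the paper's version is lighter.

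You decompose both arguments, work with $\uHom^i(\mathbf{A},\mathbf{A})$, and identify $\Phi\mathbf{A}\cong\constant_{\mathscr{B}}\otimes\Ql[W]$ Frobenius-equivariantly via a base-change argument along the Steinberg variety. The paper instead decomposes only the \emph{first} argument, working with $\uHom^i(\mathbf{A},\IC_\psi)$. After adjunction this becomes $\uHom^i(\constant_{\mathscr{B}},\Phi\IC_\psi)$, and since $\Phi\IC_\psi$ lies in the trivially-stratified category $\DBc$ one immediately factors
\[
\uHom^i(\constant_{\mathscr{B}},\Phi\IC_\psi(\tfrac{i}{2}))\;\cong\;\cH^i(\mathscr{B})(\tfrac{i}{2})\otimes\uHom^0(\constant_{\mathscr{B}},\Phi\IC_\psi).
\]
Reapplying adjunction in degree~$0$ gives $\uHom^0(\constant_{\mathscr{B}},\Phi\IC_\psi)\cong\uHom^0(\mathbf{A},\IC_\psi)\cong V_\psi^*$, with trivial Frobenius by the chosen normalization of the $\IC_\chi$. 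This bypasses any explicit identification of $\Phi\mathbf{A}$ and any appeal to the geometry of $Z$; the ``obstacle'' you flag simply does not arise. Your route still works---indeed $\Phi\mathbf{A}\cong\constant_{\mathscr{B}}[\dim\mathscr{B}]\otimes V$ with $V\cong\uHom(\mathbf{A},\mathbf{A})$ Frobenius-trivially by Lemma~\ref{lemma:trivialFrob}, so the Steinberg base-change is overkill---but the paper's one-sided decomposition gets there with less machinery.
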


\begin{proof} Note that $\uHom^{i}(\mathbf{A}, \IC_\psi(\frac{i}{2})) \cong  \displaystyle\bigoplus_\chi \uHom^i(\IC_\chi, \IC_\psi(\textstyle\frac{i}{2}))\otimes V_\chi^*$. Thus, we may compute $\uHom^i(\IC_\chi, \IC_\psi(\frac{i}{2}))$ by the $\chi^*$-isotypic component of $\uHom^{i}(\mathbf{A}, \IC_\psi(\frac{i}{2}))$ under its $W$ action. Therefore, we have that
\begin{align*} 
\uHom^i(\IC_\chi, \IC_\psi(\textstyle\frac{i}{2})) &\cong \Hom_W (V_\chi^*, \uHom^{i}(\mathbf{A}, \IC_\psi (\textstyle\frac{i}{2})))\\
				& \cong \Hom_W (V_\chi^*, \cH^i(\mathscr{B})(\textstyle\frac{i}{2})\otimes\uHom(\constant_\mathscr{B}, \Phi (\IC_\psi)))\\
				& \cong \Hom_W (V_\chi^*, \cH^i(\mathscr{B})(\textstyle\frac{i}{2})\otimes\uHom(\mathbf{A}, \IC_\psi))\\ 
				& \cong \Hom_W (V_\chi^*, \cH^i(\mathscr{B})(\textstyle\frac{i}{2})\otimes\uHom(\IC_\psi\otimes V_\psi, \IC_\psi))\\ 
				& \cong \Hom_W (V_\chi^*, \cH^i(\mathscr{B})(\textstyle\frac{i}{2})\otimes V_\psi^*)\\
				& \cong (V_\chi\otimes\cH^i(\mathscr{B})(\textstyle\frac{i}{2})\otimes V_\psi^*)^W
\end{align*}

Recall that Frobenius acts on $\cH^i(\mathscr{B})$ by $q^{\frac{i}{2}}$. Hence, the Frobenius action on $\cH^i(\mathscr{B})(\frac{i}{2})$ is trivial. Since $\cH^i(\mathscr{B})$ vanishes for $i$ odd, so does $\uHom^i(\IC_\chi, \IC_\psi)$. \end{proof}

\begin{theorem}\label{theorem:Gequivmixed} We have that $\KN$ is a mixed version of $\DNbar$, where $\DNbar$ is the triangulated category generated by the image of $\spr$ in $\DNc$. Similarly, in the $G$ equivariant case, we have that $\KNG$ is a mixed version of $\DNGbar$, where $\DNGbar$ is the triangulated category generated by the image of $\spr$ in $\DNGc$. 
\end{theorem}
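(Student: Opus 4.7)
The plan is to reduce both assertions to Theorem \ref{theorem:genmixedversion}, whose hypothesis on the Orlov category is already available in the non-equivariant case. First I would handle the statement about $\KN$: take $X_0 = \mathcal{N}_0$ and let $\mathscr{S} = \{\IC_\chi\}_{\chi \in \textup{Irr}(W)}$, so that $\purex = \puren$. Lemma \ref{Lemma:purenfrinv} is precisely the assertion that $\puren$ is Frobenius invariant in the sense of Definition \ref{def:kosher} (the odd-degree vanishing and the identification $\uHom(\IC_\chi, \IC_\psi[i](i/2))^{\textup{Frob}} \cong \uHom(\IC_\chi, \IC_\psi[i](i/2))$ are both established there). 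With that hypothesis in place, Theorem \ref{theorem:genmixedversion} immediately yields that $\KN$ is a mixed version of $\DNbar$, where the ambient triangulated category is the one generated in $\DNc$ by the objects $\mathcal{F}(\IC_\chi)$, matching the definition of $\DNbar$ given in Section \ref{section:Springer}.

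For the equivariant statement, the plan is to run the same argument in the $G$-equivariant derived category. All of the foundational material in Sections \ref{basics} and \ref{section:realization} --- the short exact sequence \eqref{eq:FrobSES}, the definition of Frobenius invariance, and the construction of the realization functor $\widetilde{\beta}$ --- is formal and carries over verbatim to $\DNG$, yielding an equivariant analogue of Theorem \ref{theorem:genmixedversion}. The only point that genuinely needs checking is that $\puregn$ is Frobenius invariant. I would prove this by repeating the chain of isomorphisms in Lemma \ref{Lemma:purenfrinv} using the $G$-equivariant version of the adjoint pair $(\Psi, \Phi)$ from Section \ref{adjointpair}, obtaining
\[
\uHom^i(\IC_\chi, \IC_\psi(\textstyle\frac{i}{2})) \;\cong\; (V_\chi \otimes \coGflag^i(\textstyle\frac{i}{2}) \otimes V_\psi^*)^W.
\]
Since $\coGflag$ is concentrated in even degrees and Frobenius acts on its degree-$i$ piece by $q^{i/2}$, the Tate twist makes the Frobenius action trivial, so both the vanishing in odd degrees and the identification of Frobenius invariants with the full Hom-space follow.

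The main obstacle, such as it is, lies in the equivariant replacement of Lemma \ref{lemma:trivialFrob}: one needs $\End_{\DNG}(\mathbf{A}) \cong \Ql[W]$ to fix the normalisation of $\IC_\chi$ and to identify $W$-isotypic components. This is not new work, however: for $G$ connected and $\mathscr{F}, \mathscr{G}$ $G$-equivariant perverse, \cite[Proposition 4.2.5]{BBD} (quoted from \cite[Section 1.16]{L2} earlier in the paper) gives $\Hom_{\DNGc}(\mathscr{F}, \mathscr{G}) \cong \Hom_{\DNc}(\mathscr{F}, \mathscr{G})$, so the required Frobenius invariance of the equivariant $\End(\mathbf{A})$ descends from Lemma \ref{lemma:trivialFrob}. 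With this in hand, the entire argument of Lemma \ref{Lemma:purenfrinv} transports to the equivariant setting, and the equivariant analogue of Theorem \ref{theorem:genmixedversion} delivers the desired statement for $\KNG$ and $\DNGbar$.
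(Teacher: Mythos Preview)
Your proposal is correct and follows essentially the same approach as the paper: reduce to Theorem~\ref{theorem:genmixedversion} by invoking Lemma~\ref{Lemma:purenfrinv} for Frobenius invariance. The paper's proof is more terse because it has already stipulated (at the start of Section~\ref{section:Springer}) that equivariant versions of statements are proved identically; your extra paragraph making the equivariant analogue of Lemma~\ref{Lemma:purenfrinv} explicit, with $\coGflag$ in place of $\coflag$, is exactly the elaboration that convention is meant to suppress.
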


\begin{proof} By Lemma \ref{Lemma:purenfrinv}, we see that $\puren$ has Frobenius invariant morphisms. Now, we apply Theorem \ref{theorem:genmixedversion}. \end{proof}

\begin{corollary} The category $\Perv^{mix}(\mathcal{N})$ is a mixed version of the category of perverse sheaves $\spr\subset\DNbar$. Similarly, $\Perv^{mix}_G(\mathcal{N})$ is a mixed version of the category of perverse sheaves $\spr\subset\DNGbar$.\end{corollary}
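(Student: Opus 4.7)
The plan is to deduce the statement directly from Theorem \ref{theorem:Gequivmixed} by restricting the mixed-version equivalence to the hearts of the respective $t$-structures, exactly in the spirit of the abelian corollary at the end of Section \ref{section:Mix} (which was deduced from Theorem \ref{theorem:genmixedversion} in the same way). I will treat the non-equivariant case; the $G$-equivariant statement follows by the identical argument after replacing $\KN$, $\DNbar$, and $\Perv^{mix}(\mathcal{N})$ by $\KNG$, $\DNGbar$, and $\Perv^{mix}_G(\mathcal{N})$.

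The first step is to observe that the realization functor $\mathcal{F} : \KN \to \DNbar$ constructed in Section \ref{section:Mix} is $t$-exact with respect to the Orlov $t$-structure on $\KN$ (whose heart is $\Perv^{mix}(\mathcal{N})$ by construction) and the perverse $t$-structure on $\DNbar$, whose heart is exactly $\spr$ by the definition of $\DNbar$ as the triangulated category generated by the $\IC_\chi$. Indeed, $\mathcal{F}$ sends the irreducibles $\IC_\chi[2i](i)[-2i]$ of $\Perv^{mix}(\mathcal{N})$ to the perverse sheaves $\IC_\chi$, since the outer shift by $[-2i]$ cancels the inner $[2i]$ and the Tate twist disappears after extension of scalars to $\bar{\mathbb{F}}_q$. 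Hence $\mathcal{F}$ restricts to an exact functor $\Perv^{mix}(\mathcal{N}) \to \spr$ whose essential image generates $\spr$ as an abelian category.

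Next, the Tate twist $\langle 2\rangle := [-2](-1)[2]$ on $\KN$ is itself $t$-exact, so it restricts to an autoequivalence of $\Perv^{mix}(\mathcal{N})$ shifting weights by $2$, and the natural isomorphism $\varepsilon : \mathcal{F}\circ\langle 2\rangle \xrightarrow{\sim} \mathcal{F}$ restricts accordingly. Applying the isomorphism \eqref{eq:mixed version} of Theorem \ref{theorem:Gequivmixed} to $M, N \in \Perv^{mix}(\mathcal{N}) \subset \KN$ yields
\[
\bigoplus_{n\in\mathbb{Z}} \Hom_{\Perv^{mix}(\mathcal{N})}(M, N\langle 2n\rangle) \xrightarrow{\sim} \Hom_{\spr}(\mathcal{F}M, \mathcal{F}N),
\]
since on both sides the morphism spaces in the ambient triangulated categories reduce to the morphism spaces in the hearts. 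This gives precisely the mixed-version structure on $\Perv^{mix}(\mathcal{N})$.

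I do not anticipate any real obstacle here: all substantive work has already been absorbed into Lemma \ref{Lemma:purenfrinv} (Frobenius invariance of $\puren$) and Theorem \ref{theorem:Gequivmixed} (its consequence through Theorem \ref{theorem:genmixedversion}). The remaining content is the formal $t$-exactness check on irreducibles and the observation that the Tate twist restricts to the heart compatibly with $\varepsilon$, both of which are immediate from the definitions in Section \ref{section:Mix}.
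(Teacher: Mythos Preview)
Your proposal is correct and matches the paper's intended argument: the paper gives no proof for this corollary (nor for its general analogue at the end of Section~\ref{section:Mix}), treating it as an immediate restriction of Theorem~\ref{theorem:Gequivmixed} to the hearts. The only point worth making explicit is that the heart of the perverse $t$-structure on $\DNbar$ really is the semisimple category $\spr$, which follows from the odd vanishing in Lemma~\ref{Lemma:purenfrinv} (so $\Ext^1(\IC_\chi,\IC_\psi)=0$); you implicitly use this when identifying the target, and it is harmless here.
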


\section{Mixed Springer Correspondence}\label{sec:mixedspringer} We now prove a \textit{mixed version} of the Springer correspondence, i.e. an equivalence of two mixed triangulated categories: $\KNG$, related to the geometry of $\mathcal{N}$, and $\dRG$, related to the representation theory of $W$. (We also consider the obvious non-equivariant analogue.)
  
Since we have a $W$-action on $\coflag$, we may define the smash product algebra $\R$. As a vector space, $\R = \Ql[W]\otimes\coflag$. The multiplication is given by 
\[ (w_1\otimes f_1)(w_2\otimes f_2) = w_1w_2\otimes\kappa(w_2^{-1})(f_1)f_2\]
for $w_1, w_2\in W$ and $f_1, f_2\in\coflag$. This algebra is discussed in \cite{DR} where they show that $\R$ and $H^{BM}_{2d-\bullet}(Z)$ are isomorphic as graded algebras. Of course, this combined with the isomorphism from \eqref{BMisom} implies that 
\[\Hom^\bullet(\mathbf{A}, \mathbf{A}) \cong \R\ .\] 

Now we consider the following $G$-equivariant version of the above proposition. A proof can be found in \cite{K}.

\begin{proposition}\label{prop:ringiso} The rings $\RG$ and $\Hom_G^\bullet(\mathbf{A},\mathbf{A})$ are isomorphic.
\end{proposition}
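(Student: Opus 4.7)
The plan is to build a ring map $\RG \to \Hom_G^\bullet(\mathbf{A},\mathbf{A})$ out of two pieces and then verify it is bijective. First, the $W$-action on $\mathbf{A}$ (in its $G$-equivariant incarnation, which exists since $\Hom_G$ and $\Hom$ agree on perverse sheaves) yields a homomorphism $\Ql[W] \to \End_G(\mathbf{A}) \subset \Hom_G^\bullet(\mathbf{A},\mathbf{A})$. Second, because $\mathbf{A} \simeq \Psi(\constant_{\mathscr{B}})$, functoriality of $\Psi$ produces a ring homomorphism
\[
\coGflag = \Hom_G^\bullet(\constant_\mathscr{B}, \constant_\mathscr{B}) \longrightarrow \Hom_G^\bullet(\Psi\constant_\mathscr{B}, \Psi\constant_\mathscr{B}) = \Hom_G^\bullet(\mathbf{A},\mathbf{A}).
\]

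Next, I would check that these two maps satisfy the smash product relation. Unwinding $\sigma$ in terms of the Grothendieck simultaneous resolution shows that Lusztig's $W$-action on $\mathbf{A}$ covers the geometric $W$-action on $G/T$ underlying $\kappa$; from this, conjugating an element $f \in \coGflag$ by $\sigma(w)$ inside $\Hom_G^\bullet(\mathbf{A},\mathbf{A})$ produces $\kappa(w)(f)$, which is exactly the relation defining $\RG$. This yields a well-defined ring map $\RG \to \Hom_G^\bullet(\mathbf{A},\mathbf{A})$. Equivalently, via adjunction $\Hom_G^\bullet(\mathbf{A},\mathbf{A}) \cong \Hom_G^\bullet(\constant_\mathscr{B}, \Phi(\mathbf{A}))$ and one can check the compatibility there, using that $\Phi$ is right adjoint to $\Psi$ and respects the $W$-action.

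To show the map is an isomorphism, I would reduce to a dimension count in each cohomological degree. The forgetful functor $\DNGc \to \DNc$ induces a convergent spectral sequence $\Hom^\bullet(\mathbf{A},\mathbf{A}) \otimes H^\bullet_G(pt) \Rightarrow \Hom_G^\bullet(\mathbf{A},\mathbf{A})$, which degenerates at $E_2$: by Lemma \ref{Lemma:purenfrinv} the groups $\uHom^i(\IC_\chi, \IC_\psi)$ vanish for $i$ odd, so all differentials are forced to be zero. Thus $\Hom_G^\bullet(\mathbf{A},\mathbf{A}) \cong \R \otimes_{\Ql} H^\bullet_G(pt)$ as graded vector spaces; using the Leray description $\coGflag \cong \coflag \otimes H^\bullet_G(pt)$, this matches $\RG$ in each degree. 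Combined with the ring homomorphism from Step~2, this forces bijectivity.

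The main obstacle is the smash product relation in Step~2: the subtle point is that it requires knowing that $\sigma$ and the $\coGflag$-action, both of which are defined through different features of the Springer resolution ($\sigma$ through monodromy on the regular part of $\mathfrak{g}$, and the cohomology action through $\pi : \widetilde{\mathcal{N}} \to \mathscr{B}$), fit together compatibly. One clean way around this is to invoke the analogous non-equivariant statement $\Hom^\bullet(\mathbf{A},\mathbf{A}) \cong \R$ from \cite{DR} and then lift the computation equivariantly, using that $H^\bullet_G(\mathscr{B})$ is generated by $H^\bullet(\mathscr{B})$ together with $H^\bullet_G(pt)$ (which commutes with everything).
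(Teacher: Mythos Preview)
The paper does not actually prove this proposition; it simply cites \cite{K} for the argument. There is therefore no proof in the paper to compare your proposal against.

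Your sketch is nonetheless a reasonable route to the result. The construction of the ring map via $\sigma$ and $\Psi$-functoriality is the natural one, and the dimension count is sound: since both $\Hom^\bullet(\mathbf{A},\mathbf{A})\cong\R$ and $H^\bullet_G(pt)$ are concentrated in even degrees, every differential in the Leray--Serre type spectral sequence changes the parity of one of the bidegrees and hence vanishes, forcing degeneration at $E_2$. Together with Chevalley's splitting $\coGflag \cong \coflag \otimes H^\bullet_G(pt)$ this gives the correct graded dimensions. Your invocation of Lemma~\ref{Lemma:purenfrinv} is logically clean (it precedes this proposition and does not depend on it), though note that the odd vanishing you need is already contained in the non-equivariant isomorphism $\Hom^\bullet(\mathbf{A},\mathbf{A})\cong\R$ cited just before the proposition.

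The genuine soft spot, which you correctly flag, is the smash product relation. Your proposed workaround---lifting from the non-equivariant identity in \cite{DR} using that $H^\bullet_G(pt)$ is central---is plausible, but it requires verifying that the equivariant $\coGflag$-action you build via $\Psi$ restricts to the non-equivariant $\coflag$-action used in \cite{DR}, and that the image of $H^\bullet_G(pt)$ under your map really is central in $\Hom_G^\bullet(\mathbf{A},\mathbf{A})$. Both are true, but neither is entirely formal; a complete proof would need to spell them out.
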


The above isomorphisms are isomorphisms of graded rings. In particular, the vanishing of the cohomology of the flag variety in odd degrees implies $\Hom^i(\mathbf{A}, \mathbf{A})$ and $\Hom_G^i(\mathbf{A}, \mathbf{A})$ vanish when $i$ is odd. Thus, we may consider the graded algebra $\mathcal{A}$ defined by $\mathcal{A}^i := \Hom^{2i}(\mathbf{A}, \mathbf{A})$. Similarly, we define $\mathcal{A}_G$ in the $G$-equivariant case. Then, it is easy to see that \[ \mathcal{A}\cong \Rtwo \textup{ and } \mathcal{A}_G\cong \RGtwo. \]

\subsection*{Graded $\mathcal{A}$ and $\mathcal{A}_G$ modules} Let $R$ be a positively graded ring, $R = \bigoplus_{i\in\mathbb{N}}R^i$. Let $\gmod(R)$ denote the category of finitely generated right graded modules over $R$. For a graded module $M\in\gmod(R)$ with $M = \bigoplus_{i\in\mathbb{Z}}M^i$, we denote by $\{1\}M$ the object obtained by shifting the grading $(\{1\}M)^i = M^{i+1}$. Let $\gproj(R)$ denote the full subcategory consisting of finitely generated projectives. Define $R^+ = \bigoplus_{i>0}R^i$, and suppose that the quotient $R_0 = R/R^+$ is a semisimple ring. \begin{remark}\label{projectives}In this case, the projective modules for $R$ are easy to describe. Let $L$ be a simple (right) $R_0$-module. Then $L\otimes_{R_0} R$ is a projective $R$-module. In fact, any projective $R$-module is a direct sum of shifts of these. A proof of this can be found in \cite[Lemma 6]{S1}. \end{remark}

In our case, we consider the graded rings $\mathcal{A} = \Rtwo$ and $\mathcal{A}_G=\RGtwo$. The degree 0 piece is isomorphic to $\QlW$ in both cases. Thus, for any irreducible $W$ representation $V_\chi$, a shift of $V_\chi\otimes\Coinv$ (or $V_\chi\otimes\Sh$ in the $G$ equivariant case) is an indecomposable projective module.  

% Consider the ideal $(\R)_{>0}$ generated by elements of strictly positive degree. Note that the quotient of the ring $\R$ by $(\R)_{>0}$ is semisimple. Indeed, $\R/(\R)_{>0}\cong \QlW$. Thus, listing the projective objects for this ring becomes  quite easy. If $V_\chi$ is an irreducible $W$ representation, then $V_\chi\otimes\coflag$ is an indecomposable projective. Direct sums of shifts of these make up all projectives for $\R$. A proof of this can be found in \cite{S1}.

Let $\dA$ and $\dAG$ be the bounded derived category of finitely generated graded modules over $\Rtwo$ and $\RGtwo$. We also consider the perfect derived category: $\dperA$. It is the full triangulated subcategory of $\dA$ generated by modules with a finite projective resolution. In other words, $\dperA$ is equivalent to $\hotproj$, the bounded homotopy category of (finitely generated) graded $\Rtwo$ modules that are projective. Note that $\dAG\cong\dperAG$ since $\RGtwo$ has finite homological dimension.  

\begin{theorem}[Mixed Springer Correspondence]\label{mixedSpringer} We have the following equivalences of triangulated categories: 
\begin{itemize}
	\item $\KN$ is equivalent to $\dperA$;
	\item $\KNG$ is equivalent to $\dAG$.
\end{itemize}
\end{theorem}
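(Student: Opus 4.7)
The plan is to produce an additive equivalence at the level of the generating subcategory $\puren$ (respectively $\puregn$) and pass to bounded homotopy categories, after which the identification $K^b(\gproj\mathcal{A})\cong\dperA$ is formal. Concretely, I would define the functor
\[
F\colon \puren \longrightarrow \gproj\mathcal{A}, \qquad F(M) \;=\; \uHom^{2\bullet}(\mathbf{A}, M),
\]
viewed as a graded right $\mathcal{A}$-module via precomposition through the identification $\mathcal{A}\cong \End^{2\bullet}(\mathbf{A})$ coming from $\Hom^\bullet(\mathbf{A},\mathbf{A})\cong\R$ (and Proposition \ref{prop:ringiso} in the equivariant case). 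Since $\uHom^{\textup{odd}}(\IC_\chi,\IC_\psi)=0$ by Lemma \ref{Lemma:purenfrinv}, the odd-degree pieces vanish and $F$ genuinely lands in even-graded modules over $\mathcal{A}$. The shift-twist $[2](1)$ on $\puren$ is intertwined with the grading shift $\{-1\}$ on the module side, as dictated by $\mathcal{A}^i=\Hom^{2i}(\mathbf{A},\mathbf{A})$.

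First I would check that $F(\IC_\chi)$ is a (shift of an) indecomposable projective. Using the decomposition $\mathbf{A}=\bigoplus_\psi\IC_\psi\otimes V_\psi$, Lemma \ref{Lemma:purenfrinv}, and the degree-doubling isomorphism $\coflag\cong\Coinv$ (respectively $\cH^\bullet_G(\mathscr{B})\cong \Sh$), one computes
\[
F(\IC_\chi)^k \;=\; \bigoplus_\psi \uHom^{2k}(\IC_\psi,\IC_\chi)\otimes V_\psi^* \;\cong\; \Coinv^k\otimes V_\chi^*,
\]
so $F(\IC_\chi)\cong V_\chi^*\otimes_{\QlW}\mathcal{A}$, which is indecomposable projective by Remark \ref{projectives} since $\mathcal{A}_0\cong \QlW$ is semisimple. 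Full faithfulness of $F$ on the additive category $\puren$ then reduces to matching, for each pair $\chi,\psi$ and each pair of shift-twists, the Hom computed via Lemma \ref{Lemma:purenfrinv}
\[
\Hom_{\puren}\!\bigl(\IC_\chi[2i](i),\IC_\psi[2j](j)\bigr) \;\cong\; \Hom_W\!\bigl(V_\chi^*,\cH^{2(j-i)}(\mathscr{B})\otimes V_\psi^*\bigr)
\]
against the graded Hom $\Hom_{\gproj\mathcal{A}}(\{-i\}(V_\chi^*\otimes\mathcal{A}),\{-j\}(V_\psi^*\otimes\mathcal{A}))$, which unwinds to the same space; Frobenius invariance of $\puren$ ensures the left-hand side is computed correctly over $\mathbb{F}_q$. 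Essential surjectivity follows from Remark \ref{projectives}, since every indecomposable graded projective is a grading shift of some $V_\chi^*\otimes_{\QlW}\mathcal{A}$, and these are precisely the $F(\IC_\chi[2i](i))$.

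With $F$ an additive equivalence, passage to bounded homotopy categories gives $\KN = K^b(\puren)\xrightarrow{\sim}K^b(\gproj\mathcal{A})$. For the non-equivariant statement, the target $K^b(\gproj\mathcal{A})$ is, by definition, the perfect derived category $\dperA$; one cannot improve this because $\mathcal{A}=\Rtwo$ has infinite global dimension (the coinvariant ring is local Artinian with nilpotent maximal ideal). The same argument in the equivariant setting gives $\KNG\xrightarrow{\sim}K^b(\gproj\mathcal{A}_G)$, and here I would invoke finite global dimension of $\mathcal{A}_G=\RGtwo$ (since $\Sh$ is a polynomial ring and $\QlW$ is semisimple, the smash product has global dimension $\leq\dim\mathfrak{h}$) to conclude $K^b(\gproj\mathcal{A}_G)=\dAG$.

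The main obstacle is the verification of full faithfulness with correct gradings and compatibility with composition: one must check that the right $\mathcal{A}$-module structure on $\uHom^{2\bullet}(\mathbf{A},\IC_\chi)$ (which comes from precomposition) really does present it as the indecomposable projective $V_\chi^*\otimes_{\QlW}\mathcal{A}$, and not some twisted form. This amounts to tracking the $W$-action coming from Lusztig's $\sigma$ through the $\Psi,\Phi$ adjunction used in Lemma \ref{Lemma:purenfrinv}, and matching it against the natural $W$-action on $\cH^\bullet(\mathscr{B})\cong\Coinv$ (respectively $\Sh$). Once this bookkeeping is settled, everything else is routine application of results already in the paper.
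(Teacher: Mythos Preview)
Your proposal is correct and follows essentially the same approach as the paper: define a functor $\puren\to\gproj\mathcal{A}$ by $\uHom^{2\bullet}(\mathbf{A},-)$ (the paper writes it as $\varphi=\bigoplus_m\Hom_{\DN}(\mathbf{A}[-2m](-m),-)$, which agrees with yours by Frobenius invariance), compute its values on indecomposables via Lemma~\ref{Lemma:purenfrinv}, verify full faithfulness by matching both Hom-spaces against $\Hom_W(V_\chi^*,\cH^{2i}(\mathscr{B})\otimes V_\psi^*)$, deduce essential surjectivity from Remark~\ref{projectives}, and then pass to homotopy categories. Your discussion of finite global dimension for $\mathcal{A}_G$ and of the compatibility of the $W$-action under the $(\Psi,\Phi)$ adjunction is more explicit than the paper's, but the overall argument is the same.
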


\begin{proof} Since we have equivalences $\dperA\cong\hotproj$ and $\dAG\cong\hotprojG$, it suffices to show that the additive categories $\Pure(\mathcal{N}_0)$ and $\gproj(\Rtwo)$ (or $\puregn$ and $\gproj(\RGtwo)$ in the $G$-equivariant case) are equivalent which is the content of the following proposition. \end{proof}
\begin{remark} The analogue of the above in the $G\times\mathbb{G}_\textup{m}$-equivariant setting should also hold by the same methods. In this case, the category $\KNGG$ should be equivalent to $\dAGG$, the derived category of graded modules over the graded Hecke algebra considered by Lusztig in \cite{L4}. We also note the discrepancy between the above statement and that in the introduction. The statement in the introduction holds with a slight modification of the definition of $\Pure(\mathcal{N}_0)$ and $\Pure_G(\mathcal{N}_0)$ to allow \textit{integral} shift-twist $[1](\frac{1}{2})$.
\end{remark}

\begin{proposition} \label{prop:proj}The categories $\Pure(\mathcal{N}_0)$ and $\gproj(\Rtwo)$ are equivalent as additive categories.
\end{proposition}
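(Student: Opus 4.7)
The plan is to construct an explicit additive equivalence
\[ F : \Pure(\mathcal{N}_0) \longrightarrow \gproj(\Rtwo), \qquad F(M) := \bigoplus_{i\in\mathbb{Z}} \Hom^{2i}(\mathbf{A}, M), \]
regarded as a graded right module over $\mathcal{A} = \Hom^{2\bullet}(\mathbf{A},\mathbf{A}) \cong \Rtwo$ via precomposition, and to arrange the conventions so that the shift-twist $[2n](n)$ on the source matches the grading shift $\{n\}$ on the target. Additivity is immediate since $\Hom^\bullet(\mathbf{A}, -)$ commutes with finite direct sums.

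The first main step is to identify $F(\IC_\chi)$ with an indecomposable graded projective. Using the decomposition $\mathbf{A} \cong \bigoplus_\psi \IC_\psi \otimes V_\psi$ and the computation inside the proof of Lemma \ref{Lemma:purenfrinv}, I would show as graded vector spaces
\[ F(\IC_\chi) \;=\; \bigoplus_i \Hom^{2i}(\mathbf{A}, \IC_\chi) \;\cong\; \bigoplus_i \Bigl(\bigoplus_\psi V_\psi^* \otimes (V_\psi \otimes H^{2i}(\mathscr{B}) \otimes V_\chi^*)^W\Bigr) \;\cong\; H^{2\bullet}(\mathscr{B})\otimes V_\chi^*, \]
where the last identification is an instance of the canonical isomorphism $\bigoplus_\psi V_\psi^* \otimes \Hom_W(V_\psi^*, U) \cong U$ for the $W$-module $U = H^\bullet(\mathscr{B}) \otimes V_\chi^*$. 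By Remark \ref{projectives}, the indecomposable projective associated with the simple $\QlW$-module $V_\chi^*$ is exactly $V_\chi^* \otimes_{\QlW} \Rtwo \cong V_\chi^* \otimes \Coinv$, and I would verify that the right $\Rtwo$-action on $F(\IC_\chi)$ (coming from the $W$-action on $\mathbf{A}$ via Lusztig's isomorphism $\QlW \cong \End(\mathbf{A})$, together with the $\coflag$-action via adjunction $(\Psi,\Phi)$ of Section \ref{adjointpair}) matches the smash-product action on $V_\chi^*\otimes\Coinv$.

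The second step is full faithfulness. By the previous step plus Remark \ref{projectives}, it suffices to compare graded $\Hom$-spaces between the generators $\IC_\chi[2n](n)$ and $\IC_\psi[2m](m)$. On the target side,
\[ \Hom_{\gproj(\Rtwo)}\bigl(\{n\}(V_\chi^*\otimes_{\QlW}\Rtwo),\ \{m\}(V_\psi^*\otimes_{\QlW}\Rtwo)\bigr) \;\cong\; \Hom_W\bigl(V_\chi^*,\ H^{2(m-n)}(\mathscr{B})\otimes V_\psi^*\bigr) \]
by adjunction of induction and restriction from the degree 0 piece. On the source side, Lemma \ref{Lemma:purenfrinv} gives exactly the same answer:
\[ \Hom(\IC_\chi[2n](n), \IC_\psi[2m](m)) \;=\; \Hom^{2(m-n)}(\IC_\chi,\IC_\psi) \;\cong\; \Hom_W\bigl(V_\chi^*,\ H^{2(m-n)}(\mathscr{B})\otimes V_\psi^*\bigr). \]
A short diagram chase shows these identifications are induced by $F$.

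Essential surjectivity is then immediate: by Remark \ref{projectives} any indecomposable in $\gproj(\Rtwo)$ is (up to grading shift) of the form $V_\chi^* \otimes_{\QlW} \Rtwo$ for some irreducible $W$-representation $V_\chi^*$, and each such module is hit by $F(\IC_\chi[2n](n))$ for appropriate $n$. The main obstacle I anticipate is the identification of the right $\Rtwo$-module structure in Step 1: while the graded vector space identification is essentially formal, checking that the $\QlW$-action via Lusztig's $\sigma$ and the $\coflag$-action via $\Psi \dashv \Phi$ assemble into the correct smash-product action requires a careful compatibility computation (using Proposition \ref{prop:ringiso} as a template for how the two pieces interact in $\Hom^\bullet(\mathbf{A},\mathbf{A})$ itself).
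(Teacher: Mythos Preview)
Your proposal is correct and follows essentially the same route as the paper: the paper applies the functor $\varphi=\bigoplus_{m}\Hom_{\DN}(\mathbf{A}[-2m](-m),-)$, identifies $\varphi(\IC_\chi[2i](i))\cong\{i\}V_\chi^*\otimes\Coinv$ via Lemma~\ref{Lemma:purenfrinv}, invokes Remark~\ref{projectives} for essential surjectivity, and matches $\Hom$-spaces using the same identification $\Hom(\IC_\chi,\IC_\psi[2i](i))\cong\Hom_W(V_\chi^*,\cH^{2i}(\mathscr{B})\otimes V_\psi^*)$. The only difference is that you flag the compatibility of the $\Rtwo$-action as a potential obstacle, whereas the paper simply asserts it; your caution there is reasonable but does not reflect a genuinely different strategy.
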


\begin{proof} We apply the functor $\varphi:=\bigoplus_{m\in\mathbb{Z}}\Hom_{\DN}(\mathbf{A}[-2m](-m), -)$. For an indecomposable $\IC_\chi[2i](i)$, we get \[\uHom^{2m+2i}(\mathbf{A}, \IC_\chi(i+m))\cong V_\chi^*\otimes\cH^{2m +2i}(\mathscr{B})(m+i)\] in degree $m$. Summing gives $\varphi(\IC_\chi[2i](i)) = \{i\}V_\chi^*\otimes\Coinv.$ It is easy to see that we get all objects in $\gproj(\Rtwo)$ in this way based on the Remark \ref{projectives} above. 

Now, we need to show that $\Hom(\IC_\chi, \IC_\psi[2i](i))\cong\Hom(\varphi(\IC_\chi), \varphi(\IC_\psi[2i](i)))$. Recall from the proof of Lemma \ref{Lemma:purenfrinv}, we have that \[\Hom(\IC_\chi, \IC_\psi[2i](i))\cong \Hom_W (V_\chi^*, \cH^{2i}(\mathscr{B})(i)\otimes V_\psi^*).\] It is easy to see that a $W$-equivariant map $V_\chi^*\rightarrow\cH^{2i}(\mathscr{B})(i)\otimes V_\psi^*$ uniquely determines a map of graded $\Rtwo$ modules from $V_\chi^*\otimes\Coinv\rightarrow \{i\}V_\psi^*\otimes\Coinv.$ \end{proof}

\begin{proposition} The categories $\Pure_G(\mathcal{N}_0)$ and $\gproj(\RGtwo)$ are equivalent as additive categories.
\end{proposition}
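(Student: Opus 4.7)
The plan is to mimic the proof of Proposition \ref{prop:proj} essentially verbatim, replacing the non-equivariant Ext-functor by its $G$-equivariant analogue. Concretely, I would define
\[
\varphi_G := \bigoplus_{m \in \mathbb{Z}} \Hom_{\DNG}(\mathbf{A}[-2m](-m), -) : \Pure_G(\mathcal{N}_0) \longrightarrow \gproj(\RGtwo),
\]
where the target is viewed as a module category via the ring isomorphism $\Hom_G^\bullet(\mathbf{A},\mathbf{A}) \cong \RG$ of Proposition \ref{prop:ringiso} (and its repackaging $\mathcal{A}_G \cong \RGtwo$ after collapsing the odd vanishing).

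First I would check that $\varphi_G$ lands in $\gproj(\RGtwo)$ and is essentially surjective. For an indecomposable $\IC_\chi[2i](i) \in \puregn$, the computation in the $G$-equivariant analogue of Lemma \ref{Lemma:purenfrinv} (which is built into the setup of Theorem \ref{theorem:Gequivmixed}, using $\cH^\bullet_G(\mathscr{B}) \cong \Sh$ in place of $\coflag \cong \Coinv$) yields
\[
\uHom_G^{2m+2i}(\mathbf{A}, \IC_\chi(i+m)) \cong V_\chi^* \otimes \cH^{2m+2i}_G(\mathscr{B})(m+i),
\]
and summing over $m$ identifies $\varphi_G(\IC_\chi[2i](i))$ with $\{i\} V_\chi^* \otimes \Sh$. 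By Remark \ref{projectives} applied to the positively graded ring $\RGtwo$ whose degree zero part is $\QlW$, every indecomposable object of $\gproj(\RGtwo)$ is a grading shift of some $V_\chi^* \otimes \Sh$, so $\varphi_G$ hits all indecomposable projectives up to isomorphism.

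Next, fully faithfulness reduces, because $\varphi_G$ is additive and both sides commute with finite direct sums, to the case of indecomposables, where I must exhibit an isomorphism
\[
\Hom_G(\IC_\chi, \IC_\psi[2i](i)) \xrightarrow{\sim} \Hom_{\gmod \RGtwo}\bigl(V_\chi^* \otimes \Sh,\, \{i\} V_\psi^* \otimes \Sh \bigr).
\]
The left-hand side, by the equivariant analogue of the computation in Lemma \ref{Lemma:purenfrinv}, equals $\Hom_W(V_\chi^*, \cH_G^{2i}(\mathscr{B})(i) \otimes V_\psi^*)$. The right-hand side, since $V_\chi^* \otimes \Sh$ is induced from $V_\chi^*$ along $\QlW \hookrightarrow \RGtwo$, is by the smash-product adjunction equal to the space of $\QlW$-equivariant maps from $V_\chi^*$ into the degree $i$ component of $\{i\} V_\psi^* \otimes \Sh$, which is $(\Sh)^i \otimes V_\psi^* = \cH^{2i}_G(\mathscr{B})(i) \otimes V_\psi^*$. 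Matching these two descriptions gives the desired isomorphism, and naturality in both arguments is automatic from the $\Hom_G(\mathbf{A},-)$ construction.

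The only step requiring real care is the first one, verifying the equivariant version of Lemma \ref{Lemma:purenfrinv}: one needs to rerun the Frobenius-invariance argument using the adjoint pair $(\Psi,\Phi)$ of Section \ref{adjointpair} in the $G$-equivariant derived category, and to use that Frobenius acts on $\cH^{2i}_G(\mathscr{B})$ by $q^i$ so that the Tate twist $(i)$ trivializes the action. Once this is in hand, the rest of the proof is a direct transcription of Proposition \ref{prop:proj}, and the statement for $\Pure_G(\mathcal{N}_0)$ follows.
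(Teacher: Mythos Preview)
Your proposal is correct and is exactly what the paper does: its proof reads in full ``Same as in Proposition \ref{prop:proj},'' and your write-up is precisely the $G$-equivariant transcription of that argument, with $\coGflag \cong \Sh$ replacing $\coflag \cong \Coinv$ throughout.
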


\begin{proof} Same as in Proposition \ref{prop:proj}.\end{proof} 
Note that the equivalence in Theorem \ref{mixedSpringer} does not preserve the usual $t$-stuctures, i.e. $\varphi(\Perv_{G}^{mix}(\mathcal{N}_0))\not\subset\gmod(\RGtwo)$. 

\section{Koszulity and DG modules}\label{dg}

Recall that the $G$-equivariant version of our Springer correspondence involves modules over the graded ring $\RGtwo$. Thus, it is natural to consider the additional structure of the Koszulity of this ring. Koszul duality between the symmetric and exterior algebras was first described in \cite{BGG}. This was extended to a more general class of rings in \cite{BGS} where they describe a derived equivalence between the categories of graded (finitely generated right) modules over a Koszul ring and its Koszul dual. Thus in our setting, we have an equivalence  
\[ \dRK \cong \dRG. \]

This equivalence transports the standard $t$-structure on $\dRK$ to a non-standard $t$-structure on $\dRG$. A description of this is given in \cite[2.13]{BGS}. Also recall the non-standard $t$-structure on $\KNG$ defined in Section \ref{section:tstr} with heart $\pervkn$. We will show that our mixed Springer equivalence \ref{mixedSpringer} is exact with respect to this non-standard $t$-structure. Therefore, it restricts to an equivalence of the hearts $\gmod(\RK)$ and $\pervkn$.

It is well known that the category $\gmod(\RK)$ has enough injective and projective objects. In particular, we see that $\pervkn$ also has enough injective and projective objects. Using this, we prove Theorem \ref{bigtheorem}: we have an equivalence of triangulated categories \[\DNGbar\cong\dgf(\RG),\] where $\dgf(\RG)$ is the derived category of finitely generated dg-modules over $\RG$.

The outline for the approach mostly follows that of \cite{S2}. 

\begin{enumerate}
	\item Take a projective resolution $\tilde{P}^\bullet\rightarrow\mathbf{A}$ in $\pervkn$.
	\item Let $P^\bullet$ be the image of $\tilde{P}^\bullet$ in $\DNGc$.
	\item Define the dgg-algebra $\dgHom(P^\bullet, P^\bullet)$.
	\item Show that the dg-ring $\dgHom(P^\bullet, P^\bullet)$ is formal and that its cohomology is $\RG$.
\end{enumerate}

\subsection{Non-standard $t$-structure}
We begin by showing our equivalence is exact with respect to the non-standard $t$-structure defined in \cite{BGS}.
\begin{theorem}\label{theorem:geomtstr} The non-standard $t$-structure defined in Section \ref{section:tstr} coincides with the geometric $t$-structure defined in \cite{BGS} on $\dRG$, thus the equivalence \[\KNG \cong \dRG\] is exact with respect to that $t$-structure. In particular, it restricts to an equivalence of the hearts \[\gmod(\RK)\cong\pervkn.\]
\end{theorem}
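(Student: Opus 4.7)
The plan is to prove the theorem by matching simple objects in the two hearts under the equivalence $\varphi: \KNG \xrightarrow{\sim} \dRG$ of Theorem \ref{mixedSpringer}. Since a bounded $t$-structure on a triangulated category is determined by its heart, and each heart here is a finite-length abelian category generated (under extensions and Tate twists) by its simples, it suffices to check that $\varphi$ carries the simples of $\pervkn$ bijectively to the simples of the geometric heart.

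First, I would unwind the simple objects on each side. By the Orlov construction of Section \ref{section:tstr} together with \cite[Proposition 5.4]{AR}, the simples of $\pervkn$ are the objects $\IC_\chi\tw{i}$ for $\chi \in \mathrm{Irr}(W)$ and $i \in \mathbb{Z}$; concretely, $\IC_\chi\tw{i}$ is represented in $K^b(\Pure_G(\mathcal{N}_0))$ by the one-term complex having $\IC_\chi[-2i](-i)$ in cohomological position $-i$. On the $\dRG$ side, the geometric $t$-structure of \cite[\S 2.13]{BGS} is transported from the standard $t$-structure on $\dRK$ via Koszul duality, and its heart is identified with the category of linear complexes of projective $\RGtwo$-modules, equivalent to $\gmod(\RK)$. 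Its simples correspond under Koszul duality to the simple $\RK$-modules $V_\chi^*\{i\}$; under the linear-complex description, each such simple is represented by the one-term complex $\{-i\}(V_\chi^* \otimes \Sh)$ placed in cohomological position $-i$ (the defining ``linearity'' condition being exactly that the grading shift matches the cohomological shift).

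Next, I would compute $\varphi$ on the simples of $\pervkn$. From Proposition \ref{prop:proj}, $\varphi(\IC_\chi[-2i](-i)) = \{-i\}V_\chi^*\otimes\Sh$ as graded projective $\RGtwo$-modules. Since $\varphi$ extends an additive functor between Orlov categories to an equivalence of bounded homotopy categories, it commutes with the chain-complex shift, so $\varphi(\IC_\chi\tw{i})$ is the one-term complex in $\hotprojG \cong \dRG$ placing $\{-i\}V_\chi^* \otimes \Sh$ in cohomological position $-i$. Comparing with the previous step, this is exactly the simple in the geometric heart attached to $V_\chi^*\{i\} \in \gmod(\RK)$, and the correspondence $\IC_\chi\tw{i} \mapsto V_\chi^*\{i\}$ is a bijection on simples.

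From the matching of simples, $\varphi$ sends $\pervkn$ into the geometric heart; it is fully faithful and essentially surjective onto the geometric heart because both hearts are the finite-length abelian categories extensionally generated by the matched families of simples, and $\varphi$ is a triangulated equivalence preserving Ext-groups between simples. Hence $\varphi$ restricts to an equivalence $\pervkn \cong \gmod(\RK)$ of hearts, which forces the two $t$-structures on $\dRG$ to coincide. The main obstacle is the bookkeeping in the second step: one must verify carefully that the grading shift $\{-i\}$ paired with cohomological position $-i$ really does satisfy the BGS linearity condition and that, under the chosen normalization of Koszul duality between $\RG$ and $\RK$, this one-term complex genuinely corresponds to $V_\chi^*\{i\}$ rather than to some other shift or twist; all subsequent consequences rest on this convention-matching being done correctly.
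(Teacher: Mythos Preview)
Your proposal is correct and follows essentially the same approach as the paper: compute $\varphi$ on the simple objects of $\pervkn$ and verify that the resulting one-term complexes of projectives satisfy the BGS linearity condition defining the geometric heart. The paper's proof is terser---it checks only that $\varphi$ of each simple lands in $D^{\leq 0,g}\cap D^{\geq 0,g}$ and leaves implicit both the passage from simples to all of $\pervkn$ and the step from heart containment to equality of $t$-structures---whereas you spell out the bijection on simples and invoke the principle that a bounded $t$-structure is determined by its heart; this extra care is welcome but does not constitute a genuinely different argument.
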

\begin{proof} As before, we let $\mathcal{A}_G = \RGtwo$. First, we recall the geometric $t$-structure, denoted $(D^{\leq0,g}, D^{\geq0,g})$, on $\dAG$ defined in \cite[Section 2.13]{BGS} obtained from the standard $t$-structure on $\dRK$ under the Koszul duality equivalence. The subcategory $D^{\leq0,g}\subseteq\dAG$ (respectively $D^{\geq0,g}\subseteq\dAG$) consists of objects isomorphic to complexes of graded projective modules \[\ldots\rightarrow P^i\rightarrow P^{i+1}\rightarrow\ldots\] such that $P^{i}$ is generated by its components of degree $\leq -i$ (respectively $\geq -i$) for all $i$. To show that our functor is exact with respect to this $t$-structure, it suffices to show that $\varphi(\pervkn)$ is contained within the heart $D^{\leq0,g}\cap D^{\geq0,g}$. Recall that an irreducible in $\pervkn$ has the form $(\IC_\chi[2i](i))[-i],$ i.e. a chain complex with $\IC_\chi[2i](i)$ in degree $i$ and $0$ elsewhere. Thus, the chain complex $\varphi((\IC_\chi[2i](i))[-i])$ is concentrated in degree $i$ with $\varphi((\IC_\chi[2i](i))[-i])^{i} = \{-i\}V_\chi\otimes\Sh$. It is easy to see that this is an object in the heart $D^{\leq0,g}\cap D^{\geq0,g}$.  \end{proof}

The equivalence $\dRK\cong\dRG$ proves that we have an equivalence \[\dpervk \cong \KNG.\] 

\subsection{A projective resolution and a dg-ring} Recall $\pervkn$ is the heart of the non-standard $t$-structure on $\KNG$ discussed in Section \ref{section:tstr}. By Theorem \ref{theorem:geomtstr}, $\pervkn$ has enough projectives. Let $\tilde{P}^\bullet$ be a projecive resolution of $\mathbf{A}$ \[(\cdots\rightarrow\tilde{P}^{-2}\rightarrow\tilde{P}^{-1}\rightarrow\tilde{P}^{0})\simeq \mathbf{A}\]
so each $\tilde{P}^i\in\pervkn\subseteq\KNG$. By Theorem \ref{theorem:Gequivmixed}, $\KNG$ is a mixed version of $\DNGbar$. In particular, we have a triangulated functor \[\nu : \KNG\rightarrow\DNGbar\] such that \[\displaystyle\bigoplus_{n\in\mathbb{Z}}\Hom_{\KNG}(M,N\langle2n\rangle) \xrightarrow{\sim}\Hom_{\DNGbar}(\nu M, \nu N)\]
for all $M, N\in\KNG$.

Now define the chain complex $P^\bullet\in\textup{C}^-\DNGbar$ by the following: $P^i=\nu(\tilde{P}^i)$ with differential $d_P$ given by the image of the differential of $\tilde{P}^\bullet$. 

Let $\mathcal{R}$ be the differential graded (dg) ring given by \[\mathcal{R}^n = \displaystyle\prod_{n=i+j, k\in\mathbb{Z}}\Hom_{\DNGbar}(P^{-i+k}, P^j[k])\] in degree $n$ and differential $d_\mathcal{R} f = d_P f - (-1)^{n}f d_P$ for $f$ homogeneous of degree $n$. We will refer to this grading as the vertical grading. For background on dg-rings and modules, see \cite{BL, S2}. Note that $\mathcal{R}$ has an extra grading (called the internal or horizontal grading) arising from the mixed structure of $\KNG$. For $m\in\mathbb{Z}$, we have \[\mathcal{R}^{n,2m} = \prod_{n=i+j, k\in\mathbb{Z}}\Hom_{\KNG}(\tilde{P}^{-i+k}\langle2m\rangle, \tilde{P}^j[k])\] and $\mathcal{R}^{n, 2m+1}=0.$ Also, note that the differential $d_\mathcal{R}$ respects the internal grading, i.e. $d_\mathcal{R}$ is a degree $(1, 0)$ map. The cohomology $\cH^\bullet(\mathcal{R})$ is a bigraded ring. We will regard it as a dg-ring with trivial differential in the vertical direction. 
\begin{lemma}\label{lemma:abovediagonal}The dg-ring $\mathcal{R}$ vanishes below the diagonal. 
\end{lemma}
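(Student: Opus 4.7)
The plan is to reduce the vanishing statement to a clean computation of Hom-spaces between indecomposable projectives in the heart $\pervkn$, exploiting the Koszul structure of the situation.

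First I would rewrite each summand in the definition of $\mathcal{R}^{n,2m}$ using the identity $\langle 2m\rangle = \tw{m}[m]$ (both unfold to $[-2m](-m)[2m]$) together with the adjunction for the chain shift. Because each $\tilde{P}^q$ lies in the heart $\pervkn$ and $\tilde{P}^{-i+k}\tw{m}$ is projective there, the resulting $\Ext$ in the heart vanishes unless $k=m$. After a reparametrization one arrives at
\[
\mathcal{R}^{n,2m} \cong \prod_{a\in\mathbb{Z}}\Hom_{\pervkn}(\tilde{P}^a\tw{m},\; \tilde{P}^{a+n-m}).
\]

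Next I would pin down the Tate-weight structure of each $\tilde{P}^a$. By Theorem \ref{theorem:geomtstr} and Koszul duality there is an equivalence $\pervkn \simeq \gmod(\RK)$ under which $\mathbf{A}$ corresponds to the regular representation $\Ql[W]$ in grading $0$, and the Tate twist $\tw{1}$ corresponds to the grading shift $\{1\}$. The chosen projective resolution $\tilde{P}^\bullet \to \mathbf{A}$ therefore corresponds to the Koszul resolution of $\Ql[W]$ over $\RK$, which is linear: its $(-p)$-th term is a direct sum of copies of $\{-p\}(V_\chi \otimes \exter)$. Translating back, $\tilde{P}^a$ is a direct sum of indecomposable projective covers $\tilde{P}(\IC_\chi\tw{a})$ whose top simples all sit in Tate weight $a$.

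With this bookkeeping in hand, the final computation is short. In $\gmod(\RK)$ one has $\Hom_{\RK}(V_\chi\otimes\exter,\{d\}(V_\psi\otimes\exter)) = \Hom_W(V_\chi,\, V_\psi\otimes\exter^d)$, which vanishes whenever $d < 0$. Applied to $\Hom_{\pervkn}(\tilde{P}^a\tw{m}, \tilde{P}^{a+n-m})$, whose top Tate weights are $a+m$ and $a+n-m$, this forces $n - 2m \geq 0$ for a nonzero contribution. Therefore $\mathcal{R}^{n,2m} = 0$ whenever $n < 2m$, which is precisely the claimed vanishing below the diagonal.

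The main obstacle I anticipate is the bookkeeping in the second paragraph: carefully tracking how $\tw{1}$ on $\pervkn$ corresponds to $\{1\}$ on $\gmod(\RK)$ through the composite $\pervkn \xrightarrow{\varphi} \dAG \xleftarrow{K} \gmod(\RK)$ (where signs can easily go awry), and confirming the linearity of the Koszul resolution. Both points rest ultimately on the Koszulity of $\RG$ already in play in the paper.
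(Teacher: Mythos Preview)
Your argument is correct, and the first half---rewriting $\langle 2m\rangle$ as $\tw{m}[m]$, using projectivity to force $k=m$, and reparametrizing to land in $\prod_a \Hom_{\pervkn}(\tilde{P}^a\tw{m},\tilde{P}^{a+n-m})$---is exactly what the paper does.

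The divergence is in the final step. You transport the question through the Koszul equivalence $\pervkn\simeq\gmod(\RK)$ and compute $\Hom$ between shifted free modules over $\RK$, reading off the vanishing from the non-negative grading of $\exter$. The paper instead stays inside the mixed abelian category $\pervkn$ and argues directly with weights: $\tilde{P}^{m-i}\tw{m}$ is a projective whose head is pure of weight $2m-i$, while $\tilde{P}^{n-i}$ has all weights $\le n-i$, so any map must vanish when $2m>n$. Both arguments rest on the same fact---linearity of the minimal resolution, i.e.\ that the head of $\tilde{P}^a$ is pure of weight $a$---which you deduce from the Koszul complex and the paper leaves implicit. The paper's route is shorter and avoids the sign-tracking through the composite equivalence that you flag as an obstacle; your route has the virtue of making explicit \emph{why} the resolution is linear (Koszulity of $\RGtwo$), which the paper's phrasing takes for granted. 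Either way, once you have established linearity of $\tilde P^\bullet$, the Hom computation you write in $\gmod(\RK)$ is literally the Koszul-dual translation of the paper's weight inequality, so the two arguments are equivalent rather than genuinely independent.
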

\begin{proof} First, we show that $k=m$ since the $\tilde{P}$'s are projective in $\pervkn$. \begin{align*}\mathcal{R}^{n,2m} &= \prod_{n=i+j, k\in\mathbb{Z}}\Hom_{\KNG}(\tilde{P}^{-i+k}\langle2m\rangle, \tilde{P}^j[k])\\
 &=\prod_{n=i+j, k\in\mathbb{Z}}\Hom_{\KNG}(\tilde{P}^{-i+k}\tw{m}, \tilde{P}^j[k-m])\\
 &=\prod_{n=i+j, k\in\mathbb{Z}}\Ext^{k-m}_{\pervkn}(\tilde{P}^{-i+k}\tw{m}, \tilde{P}^j)\\
 &=\prod_{i\in\mathbb{Z}}\Hom_{\pervkn}(\tilde{P}^{m-i}\tw{m}, \tilde{P}^{n-i})\\ \end{align*} Now, in the mixed category $\pervkn$, $\tilde{P}^{m-i}\tw{m}$ has weights less than or equal to $2m-i$ with head pure of weight $2m-i$ and $\tilde{P}^{n-i}$ has weights less than or equal to $n-i$. Any morphism is strictly compatible with the weight filtration. Thus, for $f:\tilde{P}^{m-i}\tw{m}\rightarrow\tilde{P}^{n-i}$, we have that the head of the image of $f$ is pure of weight $2m-i$. This vanishes when $2m>n$.  \end{proof}

For any $M\in\DNGbar$, we define the dg-module $\dgHom(P^\bullet, M)$ over $\mathcal{R}$. In degree $i$, we have \[\dgHom(P^\bullet, M)^i = \displaystyle\prod_{j\in\mathbb{Z}}\Hom_{\DNGbar}(P^{-i+j}, M[j])\] and has differential induced by that of $P^\bullet$: if $f\in\dgHom(P^\bullet,M)^i$, then $d_{\tilde{M}}f = (-1)^{i+1}d_P f$. 
\begin{remark}Note that in each degree $i\in\mathbb{Z}$, the module $\dgHom(P^\bullet, M)$ has only finitely many non-zero terms. One can show directly that \[\dgHom(P^\bullet, \IC_\chi)^i = \Hom_{\pervkn}(\tilde{P}^{-i/2}\tw{i/2}, \IC_\chi)\] for $i$ even and vanishes for $i$ odd using properties of the $t$-structure and mixedness of $\pervkn$. D\'evissage proves the general case.
\end{remark}

\begin{theorem}\label{theorem:formal} The differential graded ring $\mathcal{R}$ is formal. In other words, $\mathcal{R}$ is quasi-isomorphic to its cohomology. And the cohomology ring is \[\cH^\bullet(\mathcal{R}) = \Hom^\bullet_{\DNGbar}(\mathbf{A}, \mathbf{A}) \cong \RG.\]
\end{theorem}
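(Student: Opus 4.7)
The plan is to prove formality by exploiting the bigrading on $\mathcal{R}$ together with the Frobenius purity of $\RG$. The argument proceeds in three steps: identify $\cH^\bullet(\mathcal{R})$ with its bigrading, show it is concentrated on the diagonal, and produce an explicit sub-dg-ring realizing the quasi-isomorphism.

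For the cohomology computation, I would use that $\tilde{P}^\bullet\to\mathbf{A}$ is a projective resolution in $\pervkn$, together with the equivalence $\pervkn \cong \gmod(\RK)$ of Theorem \ref{theorem:geomtstr}, so that Ext-groups in the heart coincide with Hom-groups in $\KNG$ up to shifts. Combined with the mixed version property $\bigoplus_{m}\Hom_{\KNG}(M, N\langle 2m\rangle) \xrightarrow{\sim} \Hom_{\DNGbar}(\nu M, \nu N)$ of Theorem \ref{theorem:Gequivmixed}, a standard Hom-complex computation identifies $\cH^n(\mathcal{R}) \cong \Hom^n_{\DNGbar}(\mathbf{A}, \mathbf{A})$, with the horizontal piece $\cH^{n, 2m}(\mathcal{R})$ corresponding to the $\langle 2m\rangle$-summand in $\KNG$. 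Proposition \ref{prop:ringiso} identifies the resulting graded ring with $\RG$.

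Now purity gives the diagonal concentration: the degree $2i$ piece $\QlW\otimes H^{2i}_G(\mathscr{B})$ of $\RG$ is pure of Frobenius weight $2i$ (since Frobenius acts on $H^{2i}_G(\mathscr{B})$ by $q^i$), and odd-degree pieces vanish, so $\cH^{n, 2m}(\mathcal{R}) = 0$ unless $n = 2m$. To exploit this, define $\mathcal{Z}^{2m} := \ker\bigl(d_\mathcal{R}\colon \mathcal{R}^{2m, 2m} \to \mathcal{R}^{2m+1, 2m}\bigr)$ in even degrees and $\mathcal{Z}^n := 0$ in odd degrees. Then $\mathcal{Z}$ is a sub-dg-ring of $\mathcal{R}$ with zero differential, since products of diagonal cocycles land on the diagonal and remain cocycles. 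The inclusion $\mathcal{Z} \hookrightarrow \mathcal{R}$ is a quasi-isomorphism of dg-rings: in degree $2m$, surjectivity onto $\cH^{2m}(\mathcal{R}) = \cH^{2m, 2m}(\mathcal{R})$ holds by diagonal concentration, and injectivity holds because Lemma \ref{lemma:abovediagonal} gives $\mathcal{R}^{2m-1, 2m} = 0$, leaving no coboundaries in $\mathcal{R}^{2m, 2m}$; odd cohomology vanishes on both sides. Since $\mathcal{Z}$ has zero differential and is isomorphic to $\cH^\bullet(\mathcal{R}) \cong \RG$ as graded rings, $\mathcal{R}$ is formal.

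The main obstacle is the first step: carefully matching the bigrading on $\mathcal{R}$ (coming from its construction as a Hom-complex over $\KNG$) with the Frobenius weight filtration on $\Hom^\bullet_{\DNGbar}(\mathbf{A}, \mathbf{A})$, and verifying that the cohomology of this non-standard Hom-complex really computes Ext in $\DNGbar$. Once this compatibility is set up, the purity of $\RG$ immediately yields diagonal concentration, and the sub-dg-ring argument is then a formality.
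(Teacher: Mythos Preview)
Your proposal is correct and follows essentially the same approach as the paper: compute $\cH^i(\mathcal{R}^{\bullet,2m})$ via the projective resolution and the mixed-version property, establish diagonal concentration of the cohomology, and deduce formality by the standard purity-implies-formality argument (which the paper cites from Schn\"urer, while you spell out the sub-dg-ring $\mathcal{Z}$ explicitly). The only cosmetic difference is that the paper proves diagonal concentration directly by observing that $\Hom^{i-2m}_{\KNG}(\mathbf{A}[-2m](-m),\mathbf{A})$ vanishes for $i\neq 2m$ since both objects are chain complexes concentrated in degree~$0$ in $\puregn$, rather than by invoking Frobenius purity of the target ring $\RG$; this direct computation is exactly the compatibility you flag as the ``main obstacle.''
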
 
\begin{proof} As a consequence of an idea of Deligne, \cite[5.3.1, Corollary 5.3.7]{D}, purity of the cohomology $\cH^\bullet(\mathcal{R})$ with respect to the internal grading implies formality of $\mathcal{R}$. A proof of this can be found in \cite[Proposition 4]{S2}.  Purity means the cohomology in vertical degree $i$ should be concentrated in horizontal degree $i$. In other words, $\cH^i(\mathcal{R}) = \cH^i(\mathcal{R}^{\bullet, i})$. Since $\dpervk \cong \KNG$ and $\tilde{P}^\bullet$ is a projective resolution of $\mathbf{A}$, we have that 
%Ext^i ( A<2m>, A) = Ext^i (A {m}[m]. A)
%= H^i Hom( P^{*+m} {m}, P^* ).

%With the old one, the last says instead H^i Hom( P^* {m}[m], P^* ),
%and that's zero for the same reason as before.
\begin{align*}
 \cH^i(\mathcal{R}^{\bullet, 2m}) &=\cH^i(\dgHom(\tilde{P}^{\bullet+m}\langle 2m\rangle, \tilde{P}^\bullet[m])\\
 																	&=\cH^i(\dgHom(\tilde{P}^{\bullet+m}\tw{m}, \tilde{P}^\bullet))\\
 																	&=\Ext^{i}_{\dpervk}(\mathbf{A}\tw{m}[m], \mathbf{A})\\
 																	&=\Hom^i_{\KNG}(\mathbf{A}\langle 2m\rangle, \mathbf{A}) \end{align*} Then we have
\begin{align*}
\Hom^i_{\KNG}(\mathbf{A}\langle 2m\rangle, \mathbf{A}) &= \Hom^{i-2m}_{\KNG}(\mathbf{A}[-2m](-m), \mathbf{A})\\
																		%&= \Hom^{i-2m}_{\KNG}(\mathbf{A}\langle 2m\rangle[-2m], \mathbf{A})\\
																		%&= \Hom^{i-2m}_{\DNG}(\mathbf{A}\langle 2m\rangle[-2m], \mathbf{A})\\
\end{align*}
In $\KNG$, $\mathbf{A}[-2m](-m)$ and $\mathbf{A}$ are chain complexes concentrated in degree 0. Clearly, if this is nonzero, then $i = 2m$. When $i = 2m$ since our realization functor $\KNG\rightarrow\DNG$ restricts to inclusion on $\puregn$, we have that \[\Hom_{\KNG}(\mathbf{A}[-2m](-m), \mathbf{A}) = \Hom_{\DNG}(\mathbf{A}[-2m](-m), \mathbf{A}).\] By Proposition \ref{prop:ringiso}, \[\Hom_{\DNG}(\mathbf{A}[-2m](-m), \mathbf{A}) = \uHom_{G}^{2m}(\mathbf{A}, \mathbf{A})(m)\cong \QlW\otimes\cH^{2m}_G(\mathscr{B}).\]\end{proof}
\begin{remark}\label{remark:qiso}In fact, more is true. By Lemma \ref{lemma:abovediagonal} and Schn\"urer's argument \cite[Proposition 4]{S2}, we have a dg-ring homomorphism $\mathcal{R}\rightarrow\RG$ which is a quasi-isomorphism. \end{remark}

Let $\dgder(\mathcal{R})$ and $\dgder(\RG)$ be the derived category of differential graded modules over $\mathcal{R}$ and $\RG$. %We regard $\RG$ as a dg-ring with trivial differential.
Let $\dgper(\RG)$ be the perfect derived category, i.e. the smallest full triangulated category generated by the free $\RG$ module and closed under direct summands. We note that $\dgper(\RG)\cong \dgf(\RG).$

Since $\mathcal{R}$ is quasi-isomorphic to $\RG$, we have an equivalence between the derived categories of dg-modules over these dg-rings. Let $\tilde{\mathcal{L}}$ denote the composition
\[\DNGbar \stackrel{\dgHom(P^\bullet, -)}{\xrightarrow{\hspace*{2cm}}}\dgder(\mathcal{R}) \simeq \dgder(\RG).\] Because $P^\bullet$ is not an object of a triangulated category, the definition of our functor $\dgHom(P^\bullet, -)$ is somewhat non-standard. We provide the following lemma to prove that $\dgHom(P^\bullet, -)$ commutes with shift [1],  and we prove the functor is triangulated in the appendix.

\begin{lemma} The functor $\dgHom(P^\bullet, -):\DNGc\rightarrow\dgder(\mathcal{R})$ commutes with shift. 
\end{lemma}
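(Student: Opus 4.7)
The plan is to construct, for each $M \in \DNGc$, an explicit isomorphism of dg $\mathcal{R}$-modules
\[
\phi_M : \dgHom(P^\bullet, M)[1] \;\xrightarrow{\sim}\; \dgHom(P^\bullet, M[1])
\]
by re-indexing, and then to check naturality in $M$. In degree $i$ the two sides are
\[
\dgHom(P^\bullet, M)[1]^i \;=\; \prod_{j \in \mathbb{Z}} \Hom_{\DNGbar}(P^{-i-1+j}, M[j]),
\]
\[
\dgHom(P^\bullet, M[1])^i \;=\; \prod_{j \in \mathbb{Z}} \Hom_{\DNGbar}(P^{-i+j}, M[j+1]),
\]
and the substitution $j \mapsto j-1$ in the second product identifies it term-by-term with the first. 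This defines $\phi_M$ on the underlying bigraded abelian groups as (a suitable sign times) the identity map under this reindexing.

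Next I would verify compatibility with the differentials. Recall that by definition, the differential on $\dgHom(P^\bullet, N)^i$ is $f \mapsto (-1)^{i+1} f \circ d_P$; the shift $[1]$ on dg-modules changes this differential by an overall sign. A direct comparison shows that the two differentials on the domain and codomain of $\phi_M$ agree on each factor up to the same power of $-1$ determined by $i$; choosing the sign $(-1)^i$ (or whichever is forced by bookkeeping) in the definition of $\phi_M$ cures any discrepancy. The same reindexing shows that $\phi_M$ preserves the horizontal (internal/weight) grading, since the shift $[1]$ on the target does not interact with this grading.

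Then I would verify that $\phi_M$ is $\mathcal{R}$-linear. The $\mathcal{R}$-action on $\dgHom(P^\bullet, N)$ is by precomposition with the components of $P^\bullet \to P^\bullet[\ast]$; since this action only involves the source $P^\bullet$, it commutes with the target-side shift $[1]$ applied to $N$, and the same reindexing used to define $\phi_M$ identifies the two actions up to a global sign. Naturality of $\phi_M$ in $M$ is immediate from the functoriality of $\Hom_{\DNGbar}(P^{\bullet}, -)$, so $\phi$ assembles into an isomorphism of functors $\dgHom(P^\bullet, -) \circ [1] \;\cong\; [1] \circ \dgHom(P^\bullet, -)$.

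The main obstacle, and essentially the only subtle point, is getting the Koszul signs right so that the reindexing isomorphism really commutes with the differentials on the nose (and not just up to a sign that depends on $i$). This is pure bookkeeping once one fixes conventions for the shift of a dg-module and the sign in the differential of $\dgHom(P^\bullet, -)$; no further geometric input is needed beyond what has already been set up, since $P^\bullet$ is a fixed complex of objects in $\DNGbar$ and the definitions of both sides are term-by-term in the target variable.
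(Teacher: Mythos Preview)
Your proposal is correct and follows exactly the same approach as the paper: reindex the product via $j \mapsto j-1$ to identify the graded pieces, then check that the differentials match using the sign convention $d_{\tilde{M}}f = (-1)^{i+1} f\, d_P$ together with the sign flip coming from the shift of a dg-module. In fact the paper's proof is terser than yours---it omits the verification of $\mathcal{R}$-linearity and naturality that you sketch---and the sign bookkeeping works out so that no extra $(-1)^i$ is needed in $\phi_M$.
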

\begin{proof}  Recall that $\dgHom(P^\bullet, M)[1]^i = \dgHom(P^\bullet, M)^{i+1}$ with differential $d_{\tilde{M}[1]} = -d_{\tilde{M}}.$ 
\begin{align*}
	\dgHom(P^\bullet, M[1])^i &=\displaystyle\bigoplus_{j\in\mathbb{Z}}\Hom_{\DNGbar}(P^{-i+j}, M[j+1])\\    
	& =\displaystyle\bigoplus_{j\in\mathbb{Z}}\Hom_{\DNGbar}(P^{-i-1+j}, M[j])\\
	& =\dgHom(P^\bullet, M)[1]^i	
\end{align*}

The differential of $\dgHom(P^\bullet, M[1])$ is given by $d_{\tilde{M[1]}}f = (-1)^{i+1}d_P f$ for $f\in \dgHom(P^\bullet, M[1])^i = \dgHom(P^\bullet, M)^{i+1}$. So, $d_{\tilde{M[1]}}f = (-1)^{i+1}d_P f = -1((-1)^{i}d_P f = -d_{\tilde{M}}.$\end{proof}

\begin{lemma}\label{lem:tria} The functor $\dgHom(P^\bullet, -)$ is triangulated. \end{lemma}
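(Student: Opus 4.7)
The plan is to verify the functor sends distinguished triangles to distinguished triangles, since commutation with shift was just established in the preceding lemma. Fix a distinguished triangle $M\xrightarrow{f} N\xrightarrow{g} L\xrightarrow{h} M[1]$ in $\DNGbar$. Applying $\dgHom(P^\bullet,-)$ term-by-term yields morphisms of dg-$\mathcal{R}$-modules $f_*, g_*, h_*$; the task is to show that
\[
\dgHom(P^\bullet,M)\xrightarrow{f_*}\dgHom(P^\bullet,N)\xrightarrow{g_*}\dgHom(P^\bullet,L)\xrightarrow{h_*}\dgHom(P^\bullet,M)[1]
\]
is a distinguished triangle in $\dgder(\mathcal{R})$.

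I would form the dg mapping cone $C(f_*)$ in the category of dg-$\mathcal{R}$-modules; by construction it fits into a standard distinguished triangle $\dgHom(P^\bullet,M)\to\dgHom(P^\bullet,N)\to C(f_*)\to\dgHom(P^\bullet,M)[1]$ in $\dgder(\mathcal{R})$. Construct a comparison morphism $\varphi : C(f_*)\to\dgHom(P^\bullet,L)$ whose $\dgHom(P^\bullet,N)$-summand is $g_*$ and whose $\dgHom(P^\bullet,M)[1]$-summand is zero; compatibility with differentials follows from the triangle axiom $g\circ f=0$. The problem thereby reduces to showing $\varphi$ is a quasi-isomorphism of dg-modules.

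The key computation is that for every $X\in\DNGbar$,
\[
\cH^i\bigl(\dgHom(P^\bullet,X)\bigr)\;\cong\;\Hom^i_{\DNGbar}(\mathbf{A},X).
\]
Indeed, since $\tilde{P}^\bullet\to\mathbf{A}$ is a projective resolution in the heart $\pervkn$ of the non-standard $t$-structure on $\KNG$ (Theorem \ref{theorem:geomtstr}), the analogous cohomology computed inside $\KNG$ equals $\bigoplus_n\Hom^i_{\KNG}(\mathbf{A},\widetilde X\langle 2n\rangle)$ for any lift $\widetilde X$ of $X$, and the mixed-version isomorphism of Theorem \ref{theorem:Gequivmixed} collapses this sum to $\Hom^i_{\DNGbar}(\mathbf{A},X)$. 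Because this is a cohomological functor in $X$, the long exact $\Hom$-sequence associated to $M\to N\to L\to$ fits into a commutative ladder with the cohomology long exact sequence of the dg cone triangle via $\varphi_*$, and the five-lemma forces $\varphi$ to be a quasi-isomorphism. The main obstacle I expect is verifying that this identification respects the $\cH^\bullet(\mathcal{R})\cong\RG$-module structure and not merely the underlying graded vector space, so that $\varphi$ is a quasi-isomorphism of dg-modules rather than of bare complexes; this reduces to naturality of the above computation in the $P^\bullet$-slot together with the formality quasi-isomorphism $\mathcal{R}\simeq\RG$ from Remark \ref{remark:qiso}, which transports everything into $\dgder(\RG)$.
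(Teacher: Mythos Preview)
Your approach has a genuine gap at the five-lemma step, and it is precisely the obstruction the paper flags when it says ``the problem of course lies at the heart of the problem with triangulated categories: non-functoriality of cones.''

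With your choice of $\varphi$ (zero on the $\dgHom(P^\bullet,M)[1]$-summand, $g_*$ on the $\dgHom(P^\bullet,N)$-summand), the square involving the connecting maps does \emph{not} commute. The connecting map of the cone triangle sends a cycle $(m,n)\in C(f_*)$ to $[m]\in\cH^{i+1}(\dgHom(P^\bullet,M))$, while going the other way you get $h_*\varphi_*[(m,n)]=h_*[g_*n]=[(hg)_*n]=0$ since $hg=0$ in $\DNGbar$. So the ladder you need for the five-lemma fails exactly at the boundary map. Moreover, even if you managed to show $\varphi$ is a quasi-isomorphism by other means, you would only conclude that \emph{some} completion $h'$ of $f_*,g_*$ gives a distinguished triangle; matching $h'$ with $h_*$ is again the non-functorial-cone problem.

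The paper's proof (in the appendix) avoids this entirely by a detour through the flag variety. There one has an honest dg-enhancement $\gamma:\DBGc\xrightarrow{\sim}\dgf(\coGflag)$ from Bernstein--Lunts, so cones are functorial on that side and the analogue $\mathcal{L}_\mathscr{B}$ is triangulated by a direct chain-level computation $\mathcal{L}_\mathscr{B}(cone(f))\cong cone(\mathcal{L}_\mathscr{B}(f))$. One then shows, via an averaging-over-$W$ argument, that the restriction $i_*:\dgder(\RG)\to\dgder(\coGflag)$ \emph{reflects} distinguished triangles. Finally the adjoint pair $(\Psi,\Phi)$ assembles these into a commutative square $i_*\circ\tilde{\mathcal{L}}\cong\tilde{\mathcal{L}}_\mathscr{B}\circ\Phi$, and since the right-hand composite is triangulated and $i_*$ reflects triangles, $\tilde{\mathcal{L}}$ is triangulated. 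The point is that the dg-model needed to make your cone comparison rigorous exists on $\mathscr{B}$ but not (yet) on $\mathcal{N}$---producing it on $\mathcal{N}$ is essentially the content of Theorem~\ref{bigtheorem} itself.
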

\begin{proof} Given in the appendix. \end{proof}

\begin{lemma} The dg-module $\dgHom(P^\bullet, \mathbf{A})$ is quasi-isomorphic to the free module $\dgHom(P^\bullet, P^\bullet)$.
\end{lemma}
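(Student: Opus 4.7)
The plan is to construct a natural augmentation map of dg-$\mathcal{R}$-modules $\alpha \colon \dgHom(P^\bullet, P^\bullet) \to \dgHom(P^\bullet, \mathbf{A})$ and show it is a quasi-isomorphism. The augmentation $\tilde{P}^{0} \twoheadrightarrow \mathbf{A}$ coming from the projective resolution in $\pervkn$, after applying the mixed-to-unmixed functor $\nu$, yields a morphism $\epsilon \colon P^{0} \to \mathbf{A}$ in $\DNGbar$. In degree $n$, the $j=0$ component of $\dgHom(P^\bullet, P^\bullet)^n = \prod_{i+j=n,\,k} \Hom_{\DNGbar}(P^{-i+k}, P^j[k])$ maps into $\prod_k \Hom_{\DNGbar}(P^{-n+k}, \mathbf{A}[k]) = \dgHom(P^\bullet, \mathbf{A})^n$ by postcomposition with $\epsilon[k]$; all components with $j \neq 0$ are sent to zero. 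A direct check (using that $\epsilon \circ d_P^{-1,0} = 0$, i.e., $\tilde{P}^\bullet \to \mathbf{A}$ is an augmented complex) shows that $\alpha$ respects the dg-structure and the right $\mathcal{R}$-action.

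The left-hand side has cohomology $\RG$ by Theorem \ref{theorem:formal}. For the right-hand side, I would view $\dgHom(P^\bullet, \mathbf{A})$ as the totalization of a first-quadrant double complex with terms $\Hom_{\DNGbar}(P^{-i}, \mathbf{A}[j])$, whose rows compute $\Hom^\bullet_{\DNGbar}(P^{-i}, \mathbf{A})$. Using the mixed identification $\bigoplus_n \Hom_{\KNG}(M, N\langle 2n\rangle) \cong \Hom_{\DNGbar}(\nu M, \nu N)$ and the fact that $\tilde{P}^\bullet \to \mathbf{A}$ is a projective resolution in $\pervkn \cong \gmod(\RK)$ (which has enough projectives, so Ext may be computed from such resolutions), the total cohomology becomes $\Hom^\bullet_{\DNGbar}(\mathbf{A}, \mathbf{A})$, isomorphic to $\RG$ via Proposition \ref{prop:ringiso}. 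Both sides thus abstractly share cohomology $\RG$.

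It remains to check that $\alpha$ itself induces this isomorphism on cohomology. The cleanest route is to observe that the composition $\cH^\bullet(\mathcal{R}) \xrightarrow{\cH(\alpha)} \cH^\bullet(\dgHom(P^\bullet, \mathbf{A})) \cong \Hom^\bullet_{\DNGbar}(\mathbf{A}, \mathbf{A})$ is, by construction, the ``evaluate at the augmentation'' map, which matches the identification $\cH^\bullet(\mathcal{R}) \cong \Hom^\bullet_{\DNGbar}(\mathbf{A}, \mathbf{A})$ from Theorem \ref{theorem:formal} (cf.\ Remark \ref{remark:qiso}). An alternative is to form the mapping cone of $\alpha$ in dg-modules, identify it with the analogous dg-module built from $\operatorname{cone}(P^\bullet \to \mathbf{A})$, and show this cone is acyclic using the augmentation's universal property together with a column-wise spectral sequence. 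The main obstacle is the bookkeeping: tracking signs in the differential $d_\mathcal{R} f = d_P f - (-1)^n f\,d_P$, and carefully threading the identifications (through the equivalence $\KNG \cong \dpervk$, through $\nu$, and through Proposition \ref{prop:ringiso}) so that the map on cohomology induced by $\alpha$ really is the expected isomorphism rather than merely an abstract coincidence of dimensions.
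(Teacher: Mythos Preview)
Your approach is essentially the paper's: construct the map induced by the augmentation $\tilde{P}^\bullet \to \mathbf{A}$ (the paper calls it $q^*$, your $\alpha$) and show it is a quasi-isomorphism. Your description of $\alpha$ as ``postcompose with $\epsilon[k]$ on the $j=0$ component, kill the rest'' is exactly postcomposition with the chain map $P^\bullet \to \mathbf{A}$, since $\mathbf{A}$ sits in degree $0$.

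Where you diverge is in the verification. You compute both cohomologies separately and then try to argue that $\alpha$ realises the isomorphism; as you note, this leaves you with sign-tracking and the worry that you have only matched dimensions. The paper avoids this entirely by using the internal (mixed) grading: it decomposes
\[
\dgHom(P^\bullet,P^\bullet)=\bigoplus_{m}\dgHom_{\KNG}(\tilde P^\bullet,\tilde P^\bullet\langle 2m\rangle),
\qquad
\dgHom(P^\bullet,\mathbf{A})=\bigoplus_{m}\dgHom_{\KNG}(\tilde P^\bullet,\mathbf{A}\langle 2m\rangle),
\]
with $q^*$ respecting this splitting. On each summand the map is the standard comparison map of $\Hom$-complexes induced by the quasi-isomorphism $\tilde P^\bullet\to\mathbf{A}$ between bounded-above complexes, with $\tilde P^\bullet$ a complex of projectives in $\pervkn$; hence each summand of $q^*$ is a quasi-isomorphism, and no further identification is needed. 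This is cleaner than either of your proposed routes and makes the ``bookkeeping obstacle'' disappear.
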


\begin{proof}
Let $\tilde{P}^\bullet\stackrel{q}{\rightarrow}\mathbf{A}$ be the quasi-isomorphism in $\pervkn$. The image of $q$ in $\DNGbar$ induces a morphism of dg-modules \[\dgHom(P^\bullet, P^\bullet)\stackrel{q^*}{\rightarrow}\dgHom(P^\bullet, \mathbf{A}).\]  

The fact that $q^*$ induces an isomorphism in cohomology is almost by definition since $\dgHom(P^\bullet, P^\bullet) = \bigoplus_{m\in\mathbb{Z}}\dgHom_{\KNG}(\tilde{P}^\bullet, \tilde{P}^\bullet\langle2m\rangle)$ and $\dgHom(P^\bullet, \mathbf{A}) = \bigoplus_{m\in\mathbb{Z}}\dgHom_{\KNG}(\tilde{P}^\bullet, \mathbf{A}\langle2m\rangle)$ and cohomology commutes with the direct sum; we provide the following computation anyway.
\begin{align*} \cH^i(\dgHom(P^\bullet, \mathbf{A})) &= \cH^i(\bigoplus_{m\in\mathbb{Z}}\dgHom_{\KNG}(\tilde{P}^\bullet, \mathbf{A}\langle2m\rangle))\\ 											&= \bigoplus_{m\in\mathbb{Z}}\cH^i(\dgHom_{\KNG}(\tilde{P}^\bullet, \mathbf{A}\langle2m\rangle))\\											&=\bigoplus_{m\in\mathbb{Z}}\Ext^i_{\pervkn}(\mathbf{A}, \mathbf{A}\langle2m\rangle)\\											 &=\bigoplus_{m\in\mathbb{Z}}\Hom_{\KNG}(\mathbf{A}, \mathbf{A}[-2m](-m)[2m+i])												
\end{align*}

Of course, $\Hom_{\KNG}(\mathbf{A}, \mathbf{A}[-2m](-m)[2m+i])\neq 0$ implies that $2m+i=0$ because otherwise we would have chain complexes concentrated in different degrees. Thus, we see that 
\begin{align*} \cH^i(\dgHom(P^\bullet, \mathbf{A})) &= \Hom_{\KNG}(\mathbf{A}, \mathbf{A}[i](\textstyle\frac{i}{2}))\\																											&= \Hom_{\DNG}(\mathbf{A}, \mathbf{A}[i](\textstyle\frac{i}{2}))\\
																										&\cong \uHom_{\DNGbar}(\mathbf{A}, \mathbf{A}[i](\textstyle\frac{i}{2}))\\ 						 &\cong \QlW\#\cH^i(\mathscr{B}).									
\end{align*}\end{proof}

\begin{theorem}[A derived Springer correspondence]\label{bigtheorem} The category $\DNGbar$ is equivalent to $\dgf(\RG)$ as a triangulated category.
\end{theorem}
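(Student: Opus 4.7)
The plan is to show that the triangulated functor $\tilde{\mathcal{L}}\colon \DNGbar\to\dgder(\RG)$ of Lemma \ref{lem:tria} takes values in the thick subcategory $\dgper(\RG)=\dgf(\RG)$ and restricts there to an equivalence of triangulated categories. The two standard ingredients to verify are (a) full faithfulness and (b) essential surjectivity onto $\dgper(\RG)$.

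First I would identify $\tilde{\mathcal{L}}(\mathbf{A})$ with the free rank-one module $\RG$: by the preceding lemma, $\dgHom(P^\bullet,\mathbf{A})$ is quasi-isomorphic to $\dgHom(P^\bullet,P^\bullet)=\mathcal{R}$, and under the quasi-isomorphism $\mathcal{R}\to\RG$ of Remark \ref{remark:qiso} this transports to the free rank-one dg-module in $\dgper(\RG)$. Next I would verify full faithfulness at $\mathbf{A}$: for each $n\in\mathbb{Z}$,
\[
\Hom^n_{\dgder(\RG)}(\tilde{\mathcal{L}}(\mathbf{A}),\tilde{\mathcal{L}}(\mathbf{A}))\cong \cH^n(\RG)\cong\Hom^n_{\DNGbar}(\mathbf{A},\mathbf{A}),
\]
the last identification being Theorem \ref{theorem:formal}. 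The key content is that this is literally the map induced by $\tilde{\mathcal{L}}$: a morphism $f\colon\mathbf{A}\to\mathbf{A}[n]$ is sent to its post-composition action on $\dgHom(P^\bullet,\mathbf{A})\simeq\mathcal{R}$, which recovers the multiplicative structure on $\cH^\bullet(\mathcal{R})$ used in Theorem \ref{theorem:formal}. Full faithfulness then propagates to all of $\DNGbar$ by a standard dévissage: the class of pairs $(M,N)$ for which $\tilde{\mathcal{L}}$ induces an isomorphism on $\Hom^\bullet$ is closed under distinguished triangles (five lemma) and direct summands in each variable. Since $\DNGbar$ is by definition the thick subcategory generated by the simple summands of $\mathbf{A}$ (each a direct summand of $\mathbf{A}$), fixing $M=\mathbf{A}$ and varying $N$ gives the result for all $(\mathbf{A},N)$; a symmetric argument in $M$ completes full faithfulness.

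For essential surjectivity, the essential image of $\tilde{\mathcal{L}}$ is a triangulated subcategory of $\dgder(\RG)$ that is closed under summands (by fully faithful transfer from $\DNGbar$, which is closed under summands), and contains $\tilde{\mathcal{L}}(\mathbf{A})\cong\RG$; hence it contains the smallest thick subcategory generated by $\RG$, namely $\dgper(\RG)=\dgf(\RG)$. Conversely, simple summands of $\mathbf{A}$ map into $\dgper(\RG)$ by thickness, so the image lies in $\dgper(\RG)$. The main obstacle I anticipate is verifying the compatibility in the second paragraph: one must unpack the two-step construction of $\tilde{\mathcal{L}}$ (the functor $\dgHom(P^\bullet,-)$ followed by extension of scalars along $\mathcal{R}\to\RG$) and check that the resulting map on $\Hom^\bullet(\mathbf{A},\mathbf{A})$ agrees with the abstract graded-ring isomorphism of Theorem \ref{theorem:formal}, rather than being merely a coincidence of graded rings of the same dimension.
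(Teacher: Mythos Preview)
Your approach is essentially the same as the paper's: establish full faithfulness of $\tilde{\mathcal{L}}$ on a generating set and then invoke d\'evissage (the paper cites \cite[Lemma~6]{S2}, a refinement of Beilinson's Lemma). The only difference is the choice of generator: the paper works directly with the simple summands $\IC_\chi$, computes $\tilde{\mathcal{L}}(\IC_\chi)\cong V_\chi^*\otimes\cH^\bullet_G(\mathscr{B})$, and identifies both $\Hom(\IC_\chi,\IC_\psi[i])$ and $\Hom(\tilde{\mathcal{L}}\IC_\chi,\tilde{\mathcal{L}}\IC_\psi[i])$ with $\Hom_W(V_\chi^*,V_\psi^*\otimes\cH^i_G(\mathscr{B}))$ via Lemma~\ref{Lemma:purenfrinv}. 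You instead use $\mathbf{A}$ itself as a single generator, which is slightly more economical since the preceding lemma hands you $\tilde{\mathcal{L}}(\mathbf{A})\cong\RG$ directly. Both routes lead to the same place.

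You are right to flag the compatibility check---that the functor-induced map on $\Hom^\bullet(\mathbf{A},\mathbf{A})$ coincides with the ring isomorphism of Theorem~\ref{theorem:formal}---as the genuine content. The paper's proof is similarly terse on this point: it identifies source and target with the same $W$-Hom space but does not spell out that the identification is along $\tilde{\mathcal{L}}$. In either version one ultimately unwinds that post-composition by $f\colon\mathbf{A}\to\mathbf{A}[n]$ on $\dgHom(P^\bullet,\mathbf{A})$ corresponds to right multiplication by the class of $f$ in $\cH^\bullet(\mathcal{R})$, which is exactly how the ring structure on $\cH^\bullet(\mathcal{R})\cong\RG$ arises.
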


\begin{proof} Recall that $\DNGbar$ is the triangulated category generated by $\spr$ in $\DNGc$ and that $\dgf(\RG)$ is the triangulated category generated by the summands of the free module $\RG$ in $\dgder(\RG)$. Thus, by \cite[Lemma 6]{S2} a refinement of Beilinson's Lemma \cite[Lemma 1.4]{B}, it suffices to prove that \[\Hom(\IC_\chi, \IC_\psi[i]) \cong \Hom(\tilde{\mathcal{L}}\IC_\chi, \tilde{\mathcal{L}}\IC_\psi[i])\] for all irreducible perverse sheaves $\IC_\chi, \IC_\psi\in\spr$, any $i\in\mathbb{Z}$. Note that $\tilde{\mathcal{L}}\IC_\chi\cong V_\chi^*\otimes\cH^\bullet_G(\mathscr{B})$ and $\tilde{\mathcal{L}}\IC_\psi[i]\cong V_\psi^*\otimes\cH^{\bullet+i}_G(\mathscr{B}).$ A morphism of dg-modules in this case is simply a morphism of graded modules since the differentials vanish. In fact, it is easy to see that such a morphism is determined by a $W$-equivariant map $V_\chi^*\rightarrow V_\psi^*\otimes\cH^i_G(\mathscr{B})$. Hence, \[\Hom(\tilde{\mathcal{L}}\IC_\chi, \tilde{\mathcal{L}}\IC_\psi[i])\cong\Hom_{W}(V_\chi^*, V_\psi^*\otimes\cH^i_G(\mathscr{B})).\] 

A slight modification (remove Tate twists) of Lemma \ref{Lemma:purenfrinv} proves that we also have \[\Hom(\IC_\chi, \IC_\psi[i])\cong\Hom_{W}(V_\chi^*, V_\psi^*\otimes\cH^i_G(\mathscr{B})).\] \end{proof}

\begin{remark} The usual Springer correspondence can be recovered by composing with $\cH^0$.
\end{remark}

\appendix
\section{Proof of Lemma \ref{lem:tria}}

Proving our functor is triangulated is not at all straightforward. The problem of course lies at the heart of the problem with triangulated categories: non-functoriality of cones. Thus, our approach is very roundabout. It is similar to successfully killing a mosquito: the mosquito must not see you coming.

First, we prove the restriction functor $i_*: \dgder(\RG)\rightarrow\dgder(\coGflag)$ induced by the injection $i:\coGflag\rightarrow\RG$ reflects distinguished triangles. 

Then we prove some straightforward facts about the flag variety. It is well known by work of \cite{BL} that the $G$-equivariant derived category of the flag variety is equivalent to a category of dg-modules $\dgder(\coGflag)$. We treat this as a sort of enhancement to the category $\DBGc$ and then use our adjoint pair $(\Psi, \Phi)$ to say something about the nilpotent cone. 

\subsection{Averaging}
Let $W$ be a finite group. Suppose that $f: V_1\rightarrow V_2$ is a linear map between $W$-representations that is not necessarily $W$-equivariant. Then we can easily produce a $W$-equivariant map $f^a$ by averaging: \[f^a(x) = \frac{1}{|W|}\displaystyle\sum_{w\in W}w^{-1}f(wx).\] It is straightforward to check that if $f$ is $W$-equivariant, then $f=f^a$. In fact, $f$ matches $f^a$ where $f$ is \textit{locally} $W$-equivariant. 
\begin{lemma}\label{local}Let $m\in V_1$ have the property that $f(wm)=wf(m)$ for all $w\in W.$ Then, $f^a(m) = f(m)$. 
\end{lemma}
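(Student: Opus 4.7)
The plan is to unfold the definition of the averaging operator and directly apply the local equivariance hypothesis to each summand. Specifically, starting from
\[
f^a(m) = \frac{1}{|W|}\sum_{w\in W} w^{-1} f(wm),
\]
the hypothesis $f(wm) = w f(m)$ lets me replace $f(wm)$ by $w f(m)$ inside every summand. After this substitution each term collapses because $w^{-1} w = 1$, producing $f(m)$ independently of $w$, and the sum of $|W|$ identical copies of $f(m)$ divided by $|W|$ yields $f(m)$.

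Concretely, the steps I would carry out are: first substitute the hypothesis into the sum to obtain $\tfrac{1}{|W|}\sum_{w\in W} w^{-1} w f(m)$; then simplify $w^{-1}w$ to the identity on $V_2$ and pull the constant vector $f(m)$ out of the sum; finally cancel $|W|/|W|=1$. No choice is involved and no auxiliary construction is needed.

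There is no real obstacle here; the lemma is a direct consequence of the definition of $f^a$ together with the hypothesis, and is included only to be invoked later (presumably to verify that the averaging operation does not disturb morphisms that are already equivariant on the relevant piece of the module). The only minor care to take is to remark that linearity of $f$ ensures that $w^{-1}(w f(m)) = f(m)$ as an equality of vectors in $V_2$, which is immediate from the $W$-action being by linear automorphisms.
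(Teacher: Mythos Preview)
Your argument is correct and is precisely the easy computation the paper alludes to: substitute $f(wm)=wf(m)$ into the defining sum for $f^a(m)$, cancel $w^{-1}w$, and average $|W|$ copies of $f(m)$. There is nothing more to it.
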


\begin{proof} This is an easy computation.\end{proof}

\begin{lemma}\label{commutes}Suppose we have a commutative diagram 
\begin{center}
\begin{tikzpicture}[description/.style={fill=white,inner sep=1.5pt}] %[scale=.2]
 %\tikzstyle{every node}=[font=\small]

\matrix (m) [matrix of math nodes, row sep=2.5em,
column sep=1em, text height=1ex, text depth=0.25ex, nodes in empty cells]
{ M_1 & M_2 & M_3 \\
  N_1 & N_2 & N_3\\ };
\path[->,font=\scriptsize]
(m-1-1) edge node[above]{$g$} (m-1-2)
edge node[left]{$r$} (m-2-1)
(m-1-2) edge node[above]{$h$} (m-1-3)
edge node[left]{$f$} (m-2-2)
(m-1-3) edge node[left] {$s$} (m-2-3)
(m-2-1) edge node[above]{$g'$} (m-2-2)
(m-2-2) edge node[above]{$h'$} (m-2-3);
\end{tikzpicture}
\end{center}
so that $M_i$, $N_i$ are $W$-representations, the maps $g, h, g', h', r, s$ are $W$-equivariant, and $f$ is linear (not necessarily $W$-equivariant). Then the diagram where we replace $f$ with its average $f^a$ commutes.
\end{lemma}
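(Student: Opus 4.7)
The plan is to verify the two equalities $f^a \circ g = g' \circ r$ and $h' \circ f^a = s \circ h$ separately. Both follow from direct manipulation of the averaging sum, using $W$-equivariance of the surrounding maps to shuttle group elements past each map.

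For the left square, I would apply Lemma \ref{local}. Given any $m \in M_1$, I claim $f$ is \emph{locally} $W$-equivariant at $g(m)$. Indeed, for each $w \in W$, using $W$-equivariance of $g$, followed by the original commutativity $f \circ g = g' \circ r$, and then $W$-equivariance of $r$ and $g'$:
\[
f(w \cdot g(m)) = f(g(wm)) = g'(r(wm)) = g'(w \cdot r(m)) = w \cdot g'(r(m)) = w \cdot f(g(m)).
\]
By Lemma \ref{local}, $f^a(g(m)) = f(g(m)) = g'(r(m))$, so the left square commutes after replacing $f$ with $f^a$.

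For the right square I would work directly from the definition of $f^a$. Given $m \in M_2$, pulling $h'$ inside the sum using linearity and its $W$-equivariance yields
\[
h'(f^a(m)) = \frac{1}{|W|} \sum_{w \in W} h'\bigl(w^{-1} f(wm)\bigr) = \frac{1}{|W|} \sum_{w \in W} w^{-1} h'\bigl(f(wm)\bigr).
\]
Applying the original commutativity $h' \circ f = s \circ h$, followed by $W$-equivariance of $h$ and $s$, one computes
\[
w^{-1} h'(f(wm)) = w^{-1} s(h(wm)) = w^{-1} s(w \cdot h(m)) = s(h(m)),
\]
so every summand equals $s(h(m))$ and the average collapses to $s(h(m))$, proving $h' \circ f^a = s \circ h$.

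The statement presents no real obstacle: it is a formal consequence of $W$-equivariance and linearity, and the only care required is tracking which maps are equivariant at each step. Note the mild asymmetry: for the left square, $f$ is automatically locally equivariant on the image of $g$, allowing a clean invocation of Lemma \ref{local}; for the right square there is no analogous statement about the image of $f^a$, so the explicit averaging computation above is used instead.
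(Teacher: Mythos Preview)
Your proof is correct and follows essentially the same approach as the paper: for the left square you invoke Lemma \ref{local} after verifying local $W$-equivariance on the image of $g$, and for the right square you compute $h'f^a$ directly by pushing $h'$ through the averaging sum. The only cosmetic difference is that the paper phrases the right-square computation as showing $h'f^a = h'f$ (since $h'f = sh$ is $W$-equivariant), whereas you simplify each summand all the way to $s(h(m))$; the underlying manipulation is identical.
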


\begin{proof}
We begin with the left square. Note that $fg(wm) = g'r(wm) = wg'r(m) = wfg(m).$ Thus, $f$ is \textit{locally $W$-equivariant} on the image of $g$. By Lemma \ref{local}, $f^ag = fg = g'r.$

Now, we consider the right square. Note that the composition $h'f$ is $W$-equivariant since it equals the $W$-equivariant map $sh$. 
\begin{align*}
	h'f^a(m)=\frac{1}{|W|}\displaystyle\sum_{w\in W}h'(w^{-1}f(wm))=\frac{1}{|W|}\displaystyle\sum_{w\in W}w^{-1}h'f(wm) =h'f(m).
\end{align*} Since $h'f = sh$, the result follows. \end{proof}

For a dg-ring $\mathcal{R}$, we denote by $\dgK(\mathcal{R})$ the homotopy category of homotopically projective dg-modules (also referred to as $K$-projectives). It is well known that we have an equivalence $\dgder(\mathcal{R})\cong\dgK(\mathcal{R})$ \cite[Corollary 10.12.2.9]{BL}.
  
We regard $\coGflag$ as a dg-ring with trivial differential. There is a natural functor $i_*: \dgder(\RG)\rightarrow\dgder(\coGflag)$ forgetting the $W$ action. This functor, although clearly not an equivalence, has a very special property: it reflects triangles. The meaning of this is the following:

\begin{proposition}\label{reflects}Let $L\stackrel{f}{\rightarrow}M\stackrel{g}{\rightarrow}N\stackrel{h}{\rightarrow}L[1]$ be a sequence in $\dgder(\RG)$ $(\cong\dgK(\RG))$ such that its image under $i_*$ is a distinguished triangle in $\dgder(\coGflag)$ $(\cong\dgK(\coGflag)$. Then it is also distinguished in $\dgder(\RG)$.
\end{proposition}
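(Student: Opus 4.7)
The strategy is to complete the morphism $L\to M$ to a distinguished triangle $L\to M\to P\to L[1]$ in $\dgder(\RG)$ and then show that $P$ is isomorphic to $N$ in $\dgder(\RG)$ in a way compatible with the maps from $L$ and $M$. Once this is achieved, the given sequence is isomorphic to a distinguished triangle and hence is itself distinguished.

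The functor $i_*$ is restriction of scalars along the inclusion $\coGflag\hookrightarrow\RG$; it preserves mapping cones and hence sends distinguished triangles to distinguished triangles. Applying $i_*$ to the triangle above and combining with the hypothesis that $i_*(L\to M\to N\to L[1])$ is distinguished, the usual completion axiom together with the 5-lemma in $\dgder(\coGflag)$ produces an isomorphism $\phi\colon i_*P\xrightarrow{\sim} i_*N$ fitting into a morphism of triangles whose outer vertical maps are the identities. After passing to K-projective models, I would represent $\phi$ by an honest $\coGflag$-linear chain map $\phi\colon P\to N$.

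Now I would apply the averaging operation $\phi\mapsto\phi^a = \tfrac{1}{|W|}\sum_{w\in W} w^{-1}\circ\phi\circ w$ from the preceding subsection. A short computation (using that the $W$-action respects multiplication by elements of $\coGflag$) shows that $\phi^a$ is again $\coGflag$-linear, and by construction it is $W$-equivariant, so $\phi^a$ is a morphism of $\RG$-dg-modules. Since the differentials and the structural maps $L\to M$, $M\to P$, $P\to L[1]$, $M\to N$, $N\to L[1]$ are all $W$-equivariant, one checks as in the proof of Lemma \ref{commutes} that averaging commutes with pre- and post-composition by any of them — $(\phi g)^a = \phi^a g$ for $W$-equivariant $g$, and analogously $(dK+Kd)^a = dK^a + K^a d$ for any chain homotopy $K$. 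Consequently, every square that commuted up to $\coGflag$-linear homotopy before averaging still commutes up to ($\RG$-linear) homotopy afterwards, yielding a genuine morphism of distinguished triangles in $\dgder(\RG)$ with identities on the outer terms.

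Finally, by the 5-lemma for triangulated categories, $\phi^a$ is an isomorphism in $\dgder(\RG)$. The sequence $L\to M\to N\to L[1]$ is thus isomorphic to the distinguished triangle $L\to M\to P\to L[1]$ in $\dgder(\RG)$, so it is itself distinguished. The main technical obstacle is the averaging step: one must verify that $\phi^a$ retains all the relevant structural data ($\coGflag$-linearity, compatibility with the $W$-equivariant differentials, and commutativity of the outer squares up to $W$-equivariant homotopy) despite being a symmetrization of a map that was only $\coGflag$-linear to start. This is precisely what Lemmas \ref{local} and \ref{commutes} are designed to handle.
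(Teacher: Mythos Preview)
Your approach is essentially the paper's: compare the candidate sequence to the cone triangle on $f$, obtain a $\coGflag$-linear comparison map, average it over $W$ using Lemmas \ref{local} and \ref{commutes}, and conclude it is an isomorphism. The paper even uses the literal $cone(f)$ for your $P$ and maps $q:N\to cone(f)$ rather than $\phi:P\to N$, but that is cosmetic.

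There is, however, a genuine gap in your final step. You write that averaging yields ``a genuine morphism of distinguished triangles in $\dgder(\RG)$'' and then invoke ``the 5-lemma for triangulated categories'' in $\dgder(\RG)$. But the row $L\to M\to N\to L[1]$ is exactly the sequence whose distinguishedness you are trying to establish; you do not yet know it is a distinguished triangle in $\dgder(\RG)$, so the triangulated 5-lemma is not available there. This is circular as stated.

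The paper sidesteps this by passing to cohomology. Both rows yield long exact sequences of ordinary $\Ql$-vector spaces: the cone triangle because it is distinguished in $\dgK(\RG)$, and the candidate sequence because it is distinguished in $\dgK(\coGflag)$ and $i_*$ does not change underlying cohomology. Lemma \ref{commutes} is then applied \emph{at the level of cohomology} (where the squares commute strictly), and the ordinary abelian five-lemma shows each $\cH^i(q^a)$ is an isomorphism, so $q^a$ is a quasi-isomorphism and hence an isomorphism in $\dgK(\RG)$. Your argument is easily repaired the same way: either take cohomology, or apply the triangulated 5-lemma in $\dgder(\coGflag)$ (where both rows \emph{are} distinguished) to see that $\phi^a$ is a quasi-isomorphism, and then observe that a quasi-isomorphism between K-projective $\RG$-modules is an isomorphism in $\dgder(\RG)$.
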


\begin{proof} Let $L\stackrel{f}{\rightarrow}M\stackrel{g}{\rightarrow}N\stackrel{h}{\rightarrow}L[1]$ be a candidate triangle in $\dgK(\RG)$ that becomes distinguished in $\dgK(\coGflag)$. By the hypothesis, we have an isomorphism between our candidate triangle and a standard triangle in the category $\dgK(\coGflag)$
\begin{center}
\begin{tikzpicture}[description/.style={fill=white,inner sep=1.5pt}] %[scale=.2]
 %\tikzstyle{every node}=[font=\small]

\matrix (m) [matrix of math nodes, row sep=2.5em,
column sep=1em, text height=1ex, text depth=0.25ex, nodes in empty cells]
{ L & M & N & L[1] \\
  L & M & cone(f) & L[1]\\ };
\path[->,font=\scriptsize]
(m-1-1) edge node[above]{$f$} (m-1-2)
edge node[above, sloped]{$=$} (m-2-1)
(m-1-2) edge node[above]{$g$} (m-1-3)
edge node[above, sloped]{$=$} (m-2-2)
(m-1-3) edge node[above]{$h$} (m-1-4)
edge node[left] {$q$} (m-2-3)
(m-1-4) edge node[above, sloped] {$=$} (m-2-4)
(m-2-1) edge node[above]{$f$} (m-2-2)
(m-2-2) edge node[above]{$\pi_1$} (m-2-3)
(m-2-3) edge node[above]{$\pi_2$} (m-2-4);
\end{tikzpicture}
\end{center} where the map $q$ is not necessarily $W$-equivariant. Note that the standard triangle $L\stackrel{f}{\rightarrow}M\stackrel{\pi_1}{\rightarrow}cone(f)\stackrel{\pi_2}{\rightarrow}L[1]$ is distinguished in $\dgK(\RG)$.  %It is clear that the sequence  $L\stackrel{f}{\rightarrow}M\stackrel{g}{\rightarrow}N\stackrel{h}{\rightarrow}L[1]$ gives a long exact sequence in cohomology since it is distinguished in $\dgder(\coGflag)$ and since $\cH^i$ commutes with $\mathbb{U}$. 
To get a morphism of triangles in $\dgK(\RG)$, we replace $q$ with $q^a$ to get the following:

\begin{center}
\begin{tikzpicture}[description/.style={fill=white,inner sep=1.5pt}] %[scale=.2]
 %\tikzstyle{every node}=[font=\small]

\matrix (m) [matrix of math nodes, row sep=2.5em,
column sep=1em, text height=1ex, text depth=0.25ex, nodes in empty cells]
{ L & M & N & L[1] \\
  L & M & cone(f) & L[1]\\ };
\path[->,font=\scriptsize]
(m-1-1) edge node[above]{$f$} (m-1-2)
edge node[above, sloped]{$=$} (m-2-1)
(m-1-2) edge node[above]{$g$} (m-1-3)
edge node[above, sloped]{$=$} (m-2-2)
(m-1-3) edge node[above]{$h$} (m-1-4)
edge node[left] {$q^a$} (m-2-3)
(m-1-4) edge node[above, sloped] {$=$} (m-2-4)
(m-2-1) edge node[above]{$f$} (m-2-2)
(m-2-2) edge node[above]{$\pi_1$} (m-2-3)
(m-2-3) edge node[above]{$\pi_2$} (m-2-4);
\end{tikzpicture}
\end{center}

The fact that each square commutes is proven in Lemma \ref{commutes}. We take cohomology in $\dgK(\RG)$ to get a long exact sequence and then apply the five lemma. This proves that $\cH^i(q^a)$ is an isomorphism for all $i\in\mathbb{Z}$. Thus, $q^a$ is a quasi-isomorphism. \end{proof}

\subsection{Statements for the flag variety}
\begin{definition} Define $\Pure_G(\mathscr{B}_0)$ as the full subcategory of $\DBG$  with objects in $\displaystyle\bigoplus_{i\in\mathbb{Z}}\constant_{\mathscr{B}_0}[2i](i)$. 
\end{definition}

\begin{lemma} Let $\Pure_G(\mathscr{B}_0)$ be defined as above. Then we have
\begin{enumerate}
	\item $\KBG$ is a mixed version of $\DBGc$.
	\item $\KBG \cong \dSh$ as triangulated categories.
	\item $\KBG$ has a non-standard $t$-structure with heart $\pervk(\mathscr{B}_0)$ containing enough projectives.
	\item Choose a projective resolution $\tilde{Q}^\bullet$ of $\constant_{\mathscr{B}_0}$ in $\pervk(\mathscr{B}_0)$ such that the image $Q^\bullet$ in $\textup{C}^-\DBGc$ satisfies $\Psi(Q^\bullet) = P^\bullet$. Define $\dgHom(Q^\bullet, Q^\bullet)$ as above. Then $\dgHom(Q^\bullet, Q^\bullet)$ is formal.
\end{enumerate}
\end{lemma}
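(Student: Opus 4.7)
The strategy is to verify each of the four assertions by specializing arguments already developed in the paper to the flag-variety setting, then invoking Koszul duality between $\Sh$ and $\exter = \bigwedge\mathfrak{h}$ for the $t$-structure statement.

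For part (1), it suffices by Theorem \ref{theorem:genmixedversion} to check that $\Pure_G(\mathscr{B}_0)$ is Frobenius invariant in the sense of Definition \ref{def:kosher}. Since the only simple (up to shift-twist) is the constant sheaf $\constant_{\mathscr{B}_0}$, one needs to verify that $\uHom^i_G(\constant_\mathscr{B}, \constant_\mathscr{B}) \cong \cH^i_G(\mathscr{B}, \Ql)$ is pure of weight $i$ for $i$ even, vanishes for $i$ odd, and that Frobenius acts trivially after the Tate twist $(\tfrac{i}{2})$. This follows from the classical degree-doubling isomorphism $\cH^\bullet_G(\mathscr{B}) \cong \Sh$ and the fact that Frobenius acts by $q^j$ on $\cH^{2j}_G(\mathscr{B})$. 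For part (2), the plan is to mimic Proposition \ref{prop:proj}: the functor $\bigoplus_{m\in\mathbb{Z}}\Hom(\constant_{\mathscr{B}_0}[-2m](-m), -)$ sends $\constant_{\mathscr{B}_0}[2i](i)$ to $\{i\}\Sh$, producing an additive equivalence $\Pure_G(\mathscr{B}_0) \simeq \gproj(\Sh)$. Taking bounded homotopy categories then gives $\KBG \simeq K^b(\gproj \Sh)$, and since $\Sh$ is a polynomial ring on a finite-dimensional space and hence of finite graded global dimension, the latter is equivalent to $\dSh$.

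For part (3), one equips $\Pure_G(\mathscr{B}_0)$ with the second Orlov degree function $\deg(\constant_{\mathscr{B}_0}[2i](i)) = -i$, exactly as in Section \ref{section:tstr}, and invokes \cite[Proposition 5.4]{AR} to produce the non-standard $t$-structure on $\KBG$ with heart $\pervk(\mathscr{B}_0)$. To obtain enough projectives, one transports this $t$-structure across the equivalence of (2) and invokes Koszul duality between $\Sh$ and $\exter$ (following \cite{BGG,BGS}) to obtain a further equivalence $\dSh \simeq D^b(\gmod \exter)$ under which the non-standard $t$-structure matches the standard one. Since $\exter$ is finite-dimensional, $\gmod \exter$ has enough projectives, and this property is inherited by $\pervk(\mathscr{B}_0)$.

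For part (4), the plan is to run the formality argument of Theorem \ref{theorem:formal} verbatim, with $P^\bullet$ replaced by $Q^\bullet$ and $\mathbf{A}$ replaced by $\constant_{\mathscr{B}_0}$. The bigraded piece
\[
\dgHom(Q^\bullet, Q^\bullet)^{n, 2m} = \prod_{n=i+j,\,k\in\mathbb{Z}} \Hom_{\KBG}\bigl(\tilde{Q}^{-i+k}\langle 2m\rangle,\, \tilde{Q}^{j}[k]\bigr)
\]
has cohomology isomorphic to $\Hom^n_{\KBG}(\constant_{\mathscr{B}_0}\langle 2m\rangle, \constant_{\mathscr{B}_0})$, which vanishes except when $n = 2m$ (both arguments being chain complexes concentrated in degree $0$). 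Thus the cohomology is concentrated on the diagonal, so the Deligne-style purity argument of \cite[5.3.1, Corollary 5.3.7]{D} (cf.\ \cite[Proposition 4]{S2}) yields formality, with cohomology ring $\Hom^\bullet_{\DBGc}(\constant_\mathscr{B}, \constant_\mathscr{B}) \cong \cH^\bullet_G(\mathscr{B}) \cong \Sh$. The main obstacle is the identification in (3): one must check that the $t$-structure produced from the second Orlov degree matches the one produced by Koszul duality, which should follow by tracking the generator $\constant_{\mathscr{B}_0}$ and its shift-twists through both constructions, but requires the most care in bookkeeping.
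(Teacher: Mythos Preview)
Your proposal is correct and takes essentially the same approach as the paper: the paper's own proof is simply the one-line assertion that ``the proof of each statement is exactly the same as the analogous statement above for $\mathcal{N}$,'' and your proposal spells out precisely how each of those earlier arguments (Lemma~\ref{Lemma:purenfrinv}/Theorem~\ref{theorem:genmixedversion} for (1), Proposition~\ref{prop:proj}/Theorem~\ref{mixedSpringer} for (2), Section~\ref{section:tstr} and Theorem~\ref{theorem:geomtstr} for (3), Theorem~\ref{theorem:formal} for (4)) specializes to the flag-variety setting with $\mathscr{S}=\{\constant_{\mathscr{B}_0}\}$ and no $W$-action. Your caution about matching the Orlov $t$-structure with the Koszul-duality $t$-structure in (3) is exactly the content of the flag-variety analogue of Theorem~\ref{theorem:geomtstr}, and is handled by the same computation tracking where the generator lands.
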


\begin{proof} The proof of each statement is exactly the same as the analogous statement above for $\mathcal{N}$.\end{proof}

Let $\gamma:\DBGc\stackrel{\sim}{\rightarrow}\dgf(\coGflag)$ denote the equivalence described in \cite[Theorem 2.6.3, Theorem 12.7.2 \textit{ii}]{BL}. Note that it is given by composition \[\DBGc\stackrel{\sim}{\rightarrow}D^b_{B,c}(pt)\cong\dgf(\coGflag)\] where the first functor is induction equivalence. Define the chain complex of dg-modules $\mathscr{Q}^\bullet$ by $\mathscr{Q}^i = \gamma(Q^i)$ and differential $d_{\mathscr{Q}}=\gamma(Q^i\rightarrow Q^{i+1})$. Now we define a functor\[\mathcal{L}_\mathscr{B}:\dgder(\coGflag)\rightarrow\dgder(\dgHom(Q^\bullet, Q^\bullet))\] by the following formula $\mathcal{L}_\mathscr{B}(M)^n = \displaystyle\bigoplus_{n=j-i}\Hom_{\dgder(\coGflag)}(\mathscr{Q}^i,M[j])$ with differential given by $d_{\mathcal{L}_\mathscr{B}(M)}f = d_M f-(-1)^{n}fd_{\mathscr{Q}}$ for $f$ homogeneous of degree $n$. Note that the dg-rings $\dgHom(\mathscr{Q}^\bullet, \mathscr{Q}^\bullet)$ and $\dgHom(Q^\bullet, Q^\bullet)$ are isomorphic as dg-rings (not just quasi-isomorphic).

\begin{lemma} The functor $\mathcal{L}_\mathscr{B}:\dgder(\coGflag)\rightarrow\dgder(\dgHom(Q^\bullet, Q^\bullet))$ is triangulated.% and the restriction to the perfect derived category $\mathcal{L}_\mathscr{B}:\dgper(\coGflag)\rightarrow\dgper(\dgHom(Q^\bullet, Q^\bullet))$ is an equivalence.
\end{lemma}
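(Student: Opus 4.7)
The plan is to realize $\mathcal{L}_\mathscr{B}$ as a Hom-functor out of a single K-projective dg-module over $\coGflag$; once this is achieved the triangulated structure comes for free.

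First I would bundle the bounded-above chain complex $\mathscr{Q}^\bullet$ of dg-modules into a single dg-module $\widehat{\mathscr{Q}}$ over $\coGflag$ by totalization, combining the external chain differential with the internal dg-differentials using the standard Koszul signs. Each entry $\mathscr{Q}^i=\gamma(Q^i)$ is K-projective over $\coGflag$: the projective objects $Q^i\in\pervk(\mathscr{B}_0)$ are sent by the equivalence $\gamma$ to direct summands of shifts of free dg-modules, which are certainly K-projective. Since $\mathscr{Q}^\bullet$ is bounded above and each $\mathscr{Q}^i$ is K-projective, a Spaltenstein-style argument shows that the totalization $\widehat{\mathscr{Q}}$ is again a K-projective dg-module over $\coGflag$.

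Next I would unpack the definition of $\mathcal{L}_\mathscr{B}(M)$ and check that it is canonically isomorphic, as a dg-module over $\dgHom(Q^\bullet, Q^\bullet)$, to $\dgEnd_{\coGflag}(\widehat{\mathscr{Q}})$ acting on $\dgHom_{\coGflag}(\widehat{\mathscr{Q}}, M)$ in the usual way. The summation $\bigoplus_{j-i=n}\Hom(\mathscr{Q}^i, M[j])$ and the differential $d_M f-(-1)^n fd_\mathscr{Q}$ in the paper are exactly the graded pieces and Koszul hom-differential that one gets by decomposing $\widehat{\mathscr{Q}}$ according to its external grading. The remark immediately preceding the lemma already records $\dgHom(\mathscr{Q}^\bullet,\mathscr{Q}^\bullet) \cong \dgHom(Q^\bullet, Q^\bullet)$ via $\gamma$, so after identifying $\dgEnd_{\coGflag}(\widehat{\mathscr{Q}})\cong \dgHom(\mathscr{Q}^\bullet,\mathscr{Q}^\bullet)$ we land in the correct target dg-ring. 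Because $\widehat{\mathscr{Q}}$ is K-projective, $\dgHom_{\coGflag}(\widehat{\mathscr{Q}}, -)$ sends acyclic dg-modules to acyclic dg-modules and standard short exact sequences to distinguished triangles, so it descends to a well-defined triangulated functor on $\dgder(\coGflag)$, which is what we want.

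The main obstacle is the bookkeeping in the second step. One must carefully match the two gradings (external chain degree of $\mathscr{Q}^\bullet$ and internal dg-degree of each $\mathscr{Q}^i$) with the single grading on $\widehat{\mathscr{Q}}$, track the Koszul sign $(-1)^n$ in $d_{\mathcal{L}_\mathscr{B}(M)}$ against the signs produced by totalization, and verify that the right action used to define $\mathcal{L}_\mathscr{B}(M)$ as a module over $\dgHom(Q^\bullet,Q^\bullet)$ agrees with the natural right action of $\dgEnd_{\coGflag}(\widehat{\mathscr{Q}})$ on $\dgHom_{\coGflag}(\widehat{\mathscr{Q}}, M)$. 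There is also a mild point about whether the Hom-spaces in the definition are taken in $\dgK(\coGflag)$ or $\dgder(\coGflag)$; K-projectivity of each $\mathscr{Q}^i$ makes these agree, which is precisely what is needed for the whole construction to factor through the derived category. Once these compatibilities are checked, the lemma reduces to the well-known fact that hom out of a K-projective dg-module is triangulated.
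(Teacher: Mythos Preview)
Your approach has a genuine gap in the central identification. The functor $\mathcal{L}_\mathscr{B}$ is built from \emph{derived-category} Hom groups: by definition $\mathcal{L}_\mathscr{B}(M)^n=\bigoplus_{j-i=n}\Hom_{\dgder(\coGflag)}(\mathscr{Q}^i,M[j])$. By contrast, the degree-$n$ piece of the internal Hom complex $\dgHom_{\coGflag}(\widehat{\mathscr{Q}},M)$ consists of degree-$n$ graded $\coGflag$-linear maps, which decomposes as $\prod_i \Hom_{\coGflag\textup{-gr}}(\mathscr{Q}^i,M[n+i])$. Even when each $\mathscr{Q}^i$ is K-projective, the former is the \emph{cohomology} of the internal Hom complex $\dgHom(\mathscr{Q}^i,M)$, not the complex itself; the two do not agree for a general dg-module $M$. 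Your remark that K-projectivity makes the $\dgK$-Hom and $\dgder$-Hom coincide is correct but beside the point: what fails is the further identification with graded-module Hom. The same discrepancy obstructs your proposed isomorphism $\dgEnd_{\coGflag}(\widehat{\mathscr{Q}})\cong\dgHom(\mathscr{Q}^\bullet,\mathscr{Q}^\bullet)$, since the latter is again defined using derived-category Hom. At best one expects a spectral-sequence relationship (the $\mathcal{L}_\mathscr{B}$ side is the $E_1$-page for the filtration of $\dgHom(\widehat{\mathscr{Q}},M)$ by external degree), hence only a quasi-isomorphism, and establishing that naturally in $M$ and compatibly with the module structures would require substantially more work than you indicate.

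The paper's proof avoids all of this by a direct two-line computation: for an honest morphism $f:M\to N$ of dg-modules one checks that the underlying graded module and the differential of $\mathcal{L}_\mathscr{B}(\mathrm{cone}(f))$ coincide with those of $\mathrm{cone}(\mathcal{L}_\mathscr{B}(f))$, using only the explicit formula $d_{\mathcal{L}_\mathscr{B}}f=d_Mf-(-1)^n f d_{\mathscr{Q}}$. Since every distinguished triangle in $\dgder(\coGflag)$ is isomorphic to a cone triangle, this suffices.
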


\begin{proof} To see that the functor is triangulated, we will prove that for a morphism $M\stackrel{f}{\rightarrow}N$ of $\coGflag$ dg-modules, $\mathcal{L}_\mathscr{B}(cone(f))\cong cone(\mathcal{L}_\mathscr{B}(f))$. Let \[g =\left(\begin{array}{c} 
g_1\\ g_2\end{array}\right)\in cone(\mathcal{L}_\mathscr{B}(f))\] be homogeneous of degree $i$. Note that this implies $g_1$ is homogeneous of degree $i+1$ for $\mathcal{L}_{\mathscr{B}}(M)$. Of course, $cone(\mathcal{L}_\mathscr{B}(f))=\mathcal{L}_\mathscr{B}(M)[1]\oplus\mathcal{L}_\mathscr{B}(N)$ with differential given by  
\[d_1(g) =\left( \begin{array}{cc} 
-d_{\mathcal{L}(M)} & 0\\
\mathcal{L}(f) & d_{\mathcal{L}(N)}\end{array} \right)\left(\begin{array}{c} 
g_1\\ g_2\end{array}\right) =  \left( \begin{array}{cc} 
-d_M g_1 - (-1)^i g_1d_Q\\
fg_1 + d_Ng_2 - (-1)^ig_2d_Q\end{array} \right)\] On the other hand, the induced differential applied to $g\in\mathcal{L}_{\mathscr{B}}(cone(f))$ is given by \[d_2(g) =\left( \begin{array}{cc} 
-d_M & 0\\
f & d_N\end{array}\right)\left(\begin{array}{c} 
g_1\\ g_2\end{array}\right) - (-1)^i\left( \begin{array}{c} 
g_1\\
g_2\end{array}\right)d_Q\] It is clear that these two are the same. \end{proof}

\begin{remark} There is an isomorphism of functors between \[\mathcal{L}_\mathscr{B}\circ\gamma :\DBGc \rightarrow \dgder(\dgEnd(Q^\bullet)) \textup{ and } \dgHom(Q^\bullet, -): \DBGc \rightarrow \dgder(\dgEnd(Q^\bullet)).\] To see this, it suffices to compare the differentials of the dg-modules $\mathcal{L}_\mathscr{B}\circ\gamma(M)$ and $\dgHom(Q^\bullet, M)$ for $M\in\DBGc$. Note that $\dgHom(Q^\bullet, -)$ is defined as before: for $M\in\DBGc$, we define the dg-module $\dgHom(Q^\bullet, M)$ over $\dgEnd(Q^\bullet)$. In degree $i$, we have \[\dgHom(Q^\bullet, M)^i = \displaystyle\bigoplus_{j\in\mathbb{Z}}\Hom_{\DBGc}(Q^{-i+j}, M[j])\] and has differential induced by that of $Q^\bullet$: if $f\in\dgHom(Q^\bullet,M)^i$, then $d_{\tilde{M}}f = (-1)^{i+1}d_Q f$. The differential for $\mathcal{L}_\mathscr{B}(M)$ is given by  $d_{\mathcal{L}_\mathscr{B}(M)}f = d_{\gamma(M)} f-(-1)^{i}fd_{\mathscr{Q}}.$ These two differ by $d_{\gamma (M)} f$, but $d_{\gamma (M)}$ is a null-homotopic map of dg-modules. Since  $\mathcal{L}_\mathscr{B}\circ\gamma(M)$ is a direct product of $\Hom$ groups in $\dgder(\coGflag),$ composition with $d_{\gamma (M)}$ takes maps to zero. 
\end{remark}
Recall the adjoint pair $(\Psi, \Phi)$ defined in Section \ref{adjointpair}. By construction, we have an isomorphism of chain complexes $P^\bullet = \Psi(Q^\bullet)$. Hence $\Psi$ induces a map of dg-rings $\Psi: \dgEnd(Q^\bullet)\rightarrow \dgEnd(P^\bullet)$. Similarly we have homomorphisms of dg-rings $\pi_1:\dgEnd(P^\bullet)\rightarrow \RG$, $i:\coGflag\rightarrow\RG$, and $\pi_2:\dgEnd(Q^\bullet)\rightarrow\coGflag$. (We note the existence of $\pi_1$ and $\pi_2$ is implied by Remark \ref{remark:qiso}.) These are related by the following lemma.

\begin{lemma}\label{commutes} Let $i$, $\pi_1$,  $\pi_2$, and $\Psi$ be defined as above. Then $i\circ\pi_2 = \pi_1\circ\Psi.$
\end{lemma}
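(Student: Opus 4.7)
The plan is to reduce the identity $i \circ \pi_2 = \pi_1 \circ \Psi$ to a comparison of induced maps on cohomology, and then verify this comparison using the geometric description of $\RG$ provided by Proposition \ref{prop:ringiso}.

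Both sides are dg-ring homomorphisms from $\dgEnd(Q^\bullet)$ to the dg-ring $\RG$, which has trivial differential. Since the formality maps $\pi_1$ and $\pi_2$ constructed as in Theorem \ref{theorem:formal} (and its flag variety analogue) realize the identifications $\cH^\bullet(\dgEnd(P^\bullet)) \cong \RG$ and $\cH^\bullet(\dgEnd(Q^\bullet)) \cong \coGflag$, and both formality quasi-isomorphisms are constructed via the same Schn\"urer-style procedure driven by the bigraded purity of cohomology, comparing $i \circ \pi_2$ with $\pi_1 \circ \Psi$ reduces to checking that the induced map on cohomology $\cH^\bullet(\Psi) : \coGflag \to \RG$ equals the natural inclusion $i$.

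To verify this, observe that $\Psi(\constant_\mathscr{B}) = \mathbf{A}$: smoothness of $\pi$ of relative dimension $d/2$ gives $\pi^! \constant_\mathscr{B} = \constant_{\widetilde{\mathcal{N}}}[d](d/2)$, and then $\mu_*$ of this is exactly $\mathbf{A}$. Therefore the map $\cH^\bullet(\Psi)$ sends a class $\xi \in H^\bullet_G(\mathscr{B}) = \End^\bullet_{\DBGc}(\constant_\mathscr{B})$ to the endomorphism $\Psi(\xi) \in \End^\bullet_{\DNGc}(\mathbf{A})$ obtained by functoriality. Under the identification of Proposition \ref{prop:ringiso}, this $\Psi(\xi)$ is exactly $1 \otimes \xi \in \QlW \otimes \coGflag = \RG$: the $\coGflag$-factor in the smash product is realized geometrically by cup product with pullbacks of classes via $\pi$ followed by pushforward along $\mu$, which is precisely the definition of $\Psi$ acting on an endomorphism of $\constant_\mathscr{B}$.

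The main obstacle will be pinning down the splitting of $\RG$ in Proposition \ref{prop:ringiso} explicitly, so as to recognize the image of $\Psi$ as the $\coGflag$-subalgebra rather than mixing with the $\QlW$-component. This is most transparent via the Borel--Moore homology realization $\Hom^\bullet_G(\mathbf{A}, \mathbf{A}) \cong \cH^{BM}_{2d - \bullet, G}(Z)$ of \cite{K}: there the $\coGflag$-subalgebra corresponds to the fundamental class of the diagonal component of the Steinberg variety (the conormal to the identity Bruhat cell), and one checks that $\Psi$ applied to a cohomology class on $\mathscr{B}$ produces a cycle supported on precisely this diagonal component. Once this geometric bookkeeping is in place, the comparison of $i \circ \pi_2$ and $\pi_1 \circ \Psi$ as strict dg-ring maps follows because both are functorial projections to the pure degree-zero piece in the sense of Deligne.
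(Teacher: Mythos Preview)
Your proposal correctly isolates the heart of the matter: the equality $i\circ\pi_2=\pi_1\circ\Psi$ hinges on the fact that the map induced by $\Psi$ on cohomology coincides with the natural inclusion $i:\coGflag\hookrightarrow\RG$. However, your route to this fact and your reduction step both differ from the paper's, and the reduction step has a gap worth naming.

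\textbf{The reduction step.} You assert that because $\pi_1,\pi_2$ are the Schn\"urer-style formality projections, comparing $i\circ\pi_2$ with $\pi_1\circ\Psi$ \emph{reduces} to comparing the induced maps on cohomology. But the lemma claims equality of actual dg-ring homomorphisms, not merely of the maps they induce on cohomology. The Schn\"urer projection $\pi_j$ kills everything off the diagonal of the internal bigrading; so for $\pi_1\circ\Psi$ to factor through the diagonal of $\dgEnd(Q^\bullet)$ (as $i\circ\pi_2$ manifestly does), you need $\Psi$ to preserve that bigrading. This holds because $\Psi$ lifts to the mixed categories, but you do not say so, and without it the reduction is not justified. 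The paper instead handles this step by a direct chain-level computation: it checks that $\Psi$ is a chain map (so $\Psi(d_Q)=d_P$) and then verifies $\overline{\Psi(f)}=\Psi(\bar f)$ because $\Psi(\mathrm{Im}\,d_Q)\subset\mathrm{Im}\,d_P$.

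\textbf{The identification $\cH^\bullet(\Psi)=i$.} The paper simply asserts ``the inclusion $i$ is induced by the functor $\Psi$'' and moves on. You instead propose to verify this by unwinding Proposition~\ref{prop:ringiso} via the Borel--Moore homology of the Steinberg variety, arguing that $\Psi(\xi)$ is supported on the diagonal component of $Z$. This is correct but considerably heavier than necessary: the identification already follows from the computation in Lemma~\ref{Lemma:purenfrinv}, where the adjunction $(\Psi,\Phi)$ is used to exhibit the $\coGflag$-factor of $\Hom^\bullet_G(\mathbf{A},\mathbf{A})$ directly. Invoking the full cycle-theoretic description of the smash product is overkill.

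In summary: your overall strategy is sound, but the claim that strict equality of dg-maps ``follows because both are functorial projections to the pure degree-zero piece'' is too vague---you need the bigrading-compatibility of $\Psi$ to make that step rigorous, and the paper's short chain-level argument is the cleaner way to supply it.
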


\begin{proof}
We need to show the following diagram commutes.
\begin{center}
\begin{tikzpicture}
\small
\node (F) at (4, 0) {$\dgEnd(Q^\bullet)$};
\node (G) at (8, 0) {$\Hom^\bullet(\constant_\mathscr{B}, \constant_\mathscr{B})$};
\node (B) at (4, -2) {$\dgEnd(P^\bullet)$}; 
\node (C) at (8, -2) {$\Hom^\bullet(\mathbf{A}, \mathbf{A})$}; 
\draw[->] (F) -- (G)
node[midway, above] {$\pi_2$};
\draw[->] (F) -- (B) 
node[midway, left] {$\Psi$};
\draw[->] (B) -- (C)
node[midway, above] {$\pi_1$}; 
\draw[->] (G) -- (C)
node[midway, left] {$i$}; 
\end{tikzpicture}
\end{center}
Note that the inclusion $i$ is induced by the functor $\Psi$. We may rewrite the above diagram in the following way:
\begin{center}
\begin{tikzpicture}
\small
\node (F) at (4, 0) {$\dgEnd(Q^\bullet)$};
\node (G) at (8, 0) {$\cH^\bullet(\dgEnd(Q^\bullet))$};
\node (B) at (4, -2) {$\dgEnd(\Psi(Q)^\bullet)$}; 
\node (C) at (8, -2) {$\cH^\bullet(\dgEnd(\Psi(Q)^\bullet)))$}; 
\draw[->] (F) -- (G)
node[midway, above] {$\pi_2$};
\draw[->] (F) -- (B) 
node[midway, left] {$\Psi$};
\draw[->] (B) -- (C)
node[midway, above] {$\pi_1$}; 
\draw[->] (G) -- (C)
node[midway, left] {$i$}; 
\end{tikzpicture}
\end{center}

Note that $\Psi$ is a chain map $\dgEnd(Q^\bullet)\rightarrow\dgEnd(P^\bullet)$. The differential on $\dgEnd(Q^\bullet)$ is given by $d_Qf-(-1)^nfd_Q$. After we apply $\Psi$, we get $\Psi(d_Q)\Psi(f)-(-1)^n\Psi(f)\Psi(d_Q)$. Since $\Psi(d_Q)=d_P$, we have that \[\Psi(d_Q)\Psi(f)-(-1)^n\Psi(f)\Psi(d_Q) = d_P\Psi(f)-(-1)^n\Psi(f)d_P,\] which is exactly what we get when we apply the differential of $\dgEnd(P^\bullet)$ to $\Psi(f)$.

Let $f\in\dgEnd(Q^\bullet)^n$. Let $\bar{f}$ denote the cohomology class determined by $f$, i.e. $i\circ\pi_2(f) = \Psi(\bar{f})$. We want to compare this to $\overline{\Psi(f)}$.
\begin{align*}\overline{\Psi(f)} - \Psi(\bar{f}) &= \Psi(f) + Im(d_P) - \Psi(f + Im(d_Q))\\
																						&= \Psi(f) + Im(d_P) - \Psi(f) - \Psi(Im(d_Q))\\
																					 &= Im(d_P)\end{align*}

since $\Psi(Im(d_Q))\subset Im(d_P)$ since $\Psi d_Q = d_P \Psi$. Of course, $Im(d_P) = 0$ in $\cH^\bullet(\dgEnd(P^\bullet)).$ \end{proof}

\begin{lemma} The functor $\tilde{\mathcal{L}}:\DNGbar\rightarrow\dgder(\RG)$ (and hence $\mathcal{L}:\DNGbar\rightarrow\dgder(\dgEnd(P^\bullet))$) is triangulated.
\end{lemma}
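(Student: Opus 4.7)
The plan is to invoke Proposition~\ref{reflects}: since $i_*: \dgder(\RG) \to \dgder(\coGflag)$ reflects distinguished triangles, it suffices to show that $i_*\circ\tilde{\mathcal{L}}$ carries distinguished triangles to distinguished triangles. I will achieve this by exhibiting $i_*\circ\tilde{\mathcal{L}}$ as a composition of manifestly triangulated functors, using the adjoint pair $(\Psi,\Phi)$ to transport the problem to the flag variety where the corresponding statement was already verified in the preceding lemma that $\mathcal{L}_\mathscr{B}$ is triangulated.

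The central identity is Lemma~\ref{commutes}, which says $i\circ\pi_2 = \pi_1\circ\Psi$ as dg-ring homomorphisms $\dgEnd(Q^\bullet)\to\RG$. Because $\pi_1$ and $\pi_2$ are quasi-isomorphisms (this is the content of formality, Theorem~\ref{theorem:formal} and its flag-variety analogue), restriction of scalars along them induces triangulated equivalences on derived categories, so the commuting square produces an equality of triangulated functors
\[ i_*\circ(\pi_1)_* \;=\; (\pi_2)_*\circ\Psi^* : \dgder(\dgEnd(P^\bullet)) \longrightarrow \dgder(\coGflag). \]
Writing $\tilde{\mathcal{L}} = (\pi_1)_*\circ\mathcal{L}$, combining this with the adjunction isomorphism $\Hom(\Psi Q^k, M[j])\cong\Hom(Q^k, \Phi M[j])$, and applying the remark identifying $\mathcal{L}_\mathscr{B}\circ\gamma$ with $\dgHom(Q^\bullet,-)$, I would obtain a natural isomorphism
\[ i_*\circ\tilde{\mathcal{L}} \;\cong\; (\pi_2)_*\circ\mathcal{L}_\mathscr{B}\circ\gamma\circ\Phi. \]
The right-hand side is triangulated: $\Phi$ is a composition of derived sheaf-theoretic functors, $\gamma$ is a triangulated equivalence, $\mathcal{L}_\mathscr{B}$ is triangulated by the lemma immediately preceding this statement, and $(\pi_2)_*$ is a triangulated equivalence. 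A short diagram chase --- complete $\tilde{\mathcal{L}}(A)\to\tilde{\mathcal{L}}(B)$ to a candidate triangle in $\dgder(\RG)$, observe that its image under $i_*$ is isomorphic to the distinguished triangle produced by the right-hand functor, and invoke Proposition~\ref{reflects} --- then finishes the argument.

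The main obstacle, and the step I would write out with care, is the natural isomorphism $\Psi^*\circ\mathcal{L} \cong \mathcal{L}_\mathscr{B}\circ\gamma\circ\Phi$ at the level of dg-modules over $\dgEnd(Q^\bullet)$, not merely at the level of graded modules. One must verify that the bijection on Hom-spaces supplied by the adjunction is compatible with the differentials (since $P^\bullet = \Psi(Q^\bullet)$ and $\Psi(d_Q) = d_P$) and intertwines the $\dgEnd(Q^\bullet)$-action that acts on $\Psi^*\mathcal{L}(M)$ by precomposition with $\Psi(q)$ and the action on $\dgHom(Q^\bullet, \Phi M)$ by precomposition with $q$. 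This is formal once one writes down the unit and counit of the adjunction explicitly, with the usual sign bookkeeping for dg-Homs, but it is the only point where the dg-structure --- as opposed to purely triangulated information --- really enters the argument.
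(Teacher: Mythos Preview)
Your proposal is essentially the paper's own argument: reduce to the flag variety via $\Phi$ and the adjunction $(\Psi,\Phi)$, use the commutativity $i\circ\pi_2=\pi_1\circ\Psi$ of dg-ring maps to pass through the restriction functors, and then appeal to Proposition~\ref{reflects}. Two small remarks. First, your notation is inverted relative to the paper's: what you write as $(\pi_1)_*$ and $(\pi_2)_*$ must be the \emph{inverses} of restriction (the paper's $\pi_1^*,\pi_2^*$), and your $\Psi^*$ is what the paper calls $\Psi_*$; once this is straightened out your displayed identity matches the paper's commuting square. Second, you are actually more scrupulous than the paper on one point: the paper simply asserts that the full diagram commutes, whereas you correctly flag that the left square $\Psi_*\circ\mathcal{L}\cong\dgHom(Q^\bullet,\Phi(-))$ needs the adjunction bijection to be checked against the differentials and the $\dgEnd(Q^\bullet)$-action---this is indeed the only place where dg bookkeeping matters, and it goes through because $P^\bullet=\Psi(Q^\bullet)$ and $\Psi(d_Q)=d_P$.
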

\begin{proof}

We denote by $\Psi_*$, $\pi_{1 *}$, $\pi_{2 *}$, and $i_*$ the restriction functors defined in \cite[10.12.5]{BL} induced by the corresponding dg-ring homomorphisms. Lemma \ref{commutes} implies that the following diagram commutes:
\begin{center}
\begin{tikzpicture}
\small
\node (A) at (0, 0) {$\DNGbar$};
\node (B) at (4, 0) {$\dgder(\dgEnd(P^\bullet))$}; 
\node (C) at (8, 0) {$\dgder(\RG)$}; 
\node (D) at (0,-2) {$\DBGc$};
%\node (E) at (3, -2) {$\dgder(\coGflag)$};
\node (F) at (4, -2) {$\dgder(\dgEnd(Q^\bullet))$};
\node (G) at (8, -2) {$\dgder(\coGflag)$};
\draw[->] (A) -- (B)
node[midway, above] {$\mathcal{L}$};
\draw[->] (C) -- (B) 
node[midway, above] {$\pi_{1 *}$};
\draw[->] (B) -- (F)
node[midway, left] {$\Psi_*$}; 
\draw[->] (C) -- (G)
node[midway, left] {$i_*$}; 
\draw[->] (A) -- (D)
node[midway, left] {$\Phi$};
\draw[->] (D) -- (F)
node[midway, above] {$\mathcal{L}_\mathscr{B}$};
\draw[->] (G) -- (F)
node[midway, above] {$\pi_{2 *}$};
\end{tikzpicture}
\end{center}

Now, since $\pi_{1 *}$ and $\pi_{2 *}$ are equivalences, the following diagram where we replace them with their inverses also commutes.
\begin{center}
\begin{tikzpicture}
\small
\node (A) at (0, 0) {$\DNGbar$};
\node (B) at (4, 0) {$\dgder(\dgEnd(P^\bullet))$}; 
\node (C) at (8, 0) {$\dgder(\RG)$}; 
\node (D) at (0,-2) {$\DBGc$};
%\node (E) at (3, -2) {$\dgder(\coGflag)$};
\node (F) at (4, -2) {$\dgder(\dgEnd(Q^\bullet))$};
\node (G) at (8, -2) {$\dgder(\coGflag)$};
\draw[->] (A) -- (B)
node[midway, above] {$\mathcal{L}$};
\draw[->] (B) -- (C) 
node[midway, above] {$\pi^*_1$};
\draw[->] (B) -- (F)
node[midway, left] {$\Psi_*$}; 
\draw[->] (C) -- (G)
node[midway, left] {$i_*$}; 
\draw[->] (A) -- (D)
node[midway, left] {$\Phi$};
\draw[->] (D) -- (F)
node[midway, above] {$\mathcal{L}_\mathscr{B}$};
\draw[->] (F) -- (G)
node[midway, above] {$\pi_2^*$};
\end{tikzpicture}
\end{center}

Thus, we have that the outer diagram commutes, where $\tilde{\mathcal{L}} = \pi^*_1\circ\mathcal{L}$ and $\tilde{\mathcal{L}}_{\mathscr{B}} = \pi_2^*\circ\mathcal{L}_\mathscr{B}$. 
\begin{center}
\begin{tikzpicture}
\small
\node (A) at (0, 0) {$\DNGbar$};
\node (C) at (4, 0) {$\dgder(\RG)$}; 
\node (D) at (0,-2) {$\DBGc$};
\node (G) at (4, -2) {$\dgder(\coGflag)$};
\draw[->] (A) -- (C)
node[midway, above] {$\tilde{\mathcal{L}}$};
\draw[->] (C) -- (G)
node[midway, right] {$i_*$}; 
\draw[->] (A) -- (D)
node[midway, right] {$\Phi$};
\draw[->] (D)	-- (G)
node[midway, above] {$\tilde{\mathcal{L}}_{\mathscr{B}}$};
\end{tikzpicture}
\end{center} Suppose $L\rightarrow M\rightarrow N\rightarrow L[1]$ is distinguished in $\DNGbar$. We need to show that the triangle $\tilde{\mathcal{L}}L\rightarrow\tilde{\mathcal{L}}M\rightarrow\tilde{\mathcal{L}}N\rightarrow\tilde{\mathcal{L}}L[1]$ is distinguished. Note that the triangle $\tilde{\mathcal{L}}_{\mathscr{B}}\Phi L\rightarrow\tilde{\mathcal{L}}_{\mathscr{B}}\Phi M\rightarrow\tilde{\mathcal{L}}_{\mathscr{B}}\Phi N\rightarrow\tilde{\mathcal{L}}_{\mathscr{B}}\Phi L[1]$ is distinguished since $\tilde{\mathcal{L}}_{\mathscr{B}}\Phi$ is a composition of triangulated functors. By commutativity of the above square, this implies the triangle $i_*\tilde{\mathcal{L}}L\rightarrow i_*\tilde{\mathcal{L}}M\rightarrow i_*\tilde{\mathcal{L}}N\rightarrow i_*\tilde{\mathcal{L}}L[1]$ is distinguished. By Lemma \ref{reflects}, the functor $i_*$ reflects triangles. Hence, $\tilde{\mathcal{L}}L\rightarrow\tilde{\mathcal{L}}M\rightarrow\tilde{\mathcal{L}}N\rightarrow\tilde{\mathcal{L}}L[1]$ is distinguished. \end{proof}

\bibliographystyle{jabrefpna}	% (uses file "plain.bst") amsalpha alphanum
\bibliography{somereferences2}		% expects file "myrefs.bib"

\end{document}